\documentclass[12pt]{amsart}

\usepackage{amsmath,
amsthm,
amssymb,
mathrsfs,
amsfonts,
verbatim,
enumitem,
color,leftidx}
\usepackage{bbm}
\usepackage[all,tips]{xy}
% \usepackage{graphicx,ifpdf}
% \ifpdf
%    \DeclareGraphicsRule{*}{mps}{*}{}
% \fi
\usepackage{hyperref}
%%%%%%%%These package should be deleted before submitting

% \usepackage[right,displaymath,mathlines]{lineno}
% \usepackage[notref,notcite]{showkeys} 
% \linenumbers
%%%%%%%%%%%%%%%%%%%%%%%%%%%%%%%%

\newtheorem{thm}{Theorem}[section]
\newtheorem{lem}[thm]{Lemma}
\newtheorem{observation}[thm]{Observation}
\newtheorem{claim}[thm]{Claim}
\newtheorem{cor}[thm]{Corollary}
\newtheorem{prop}[thm]{Proposition}

\newtheorem{conj}[thm]{Conjecture}

\theoremstyle{definition}
\newtheorem{de}[thm]{Definition}
\newtheorem{ex}[thm]{Example}

\theoremstyle{remark}
\newtheorem{remark}[thm]{Remark}
\newtheorem{discussion}[thm]{Discussion}

\numberwithin{equation}{section}

% Absolute value notation

\makeatletter

\newcommand{\Rmnum}[1]{\expandafter\@slowromancap\romannumeral #1@}
\makeatother

\newcommand{\bp}{\mathbf{p}}
\newcommand{\bP}{\mathbf{P}}

\newcommand{\bu}{\mathbf{u}}

\newcommand{\bG}{\mathbf{G}}

\newcommand{\bN}{\mathbf{N}}
\newcommand{\bS}{\mathbf{S}}

\newcommand{\RR}{\mathbb{R}}

\newcommand{\NN}{\mathbb{N}}
\newcommand{\SL}{\operatorname{SL}}
\newcommand{\GL}{\operatorname{GL}}

\newcommand{\ZZ}{\mathbb{Z}}

\newcommand{\QQ}{\mathbb{Q}}

\DeclareMathOperator{\mult}{{\mathrm {mult}}}

\DeclareMathOperator{\diag}{\mathrm{diag}}
\DeclareMathOperator{\spa}{\mathrm{span}}
\newcommand{\Ad}{\operatorname{Ad}}

\DeclareMathOperator{\supp}{\mathrm{supp\,}}
\DeclareMathOperator{\rk}{\mathrm{rank}}
\DeclareMathOperator{\invdim}{\mathrm{invdim\,}}
\DeclareMathOperator{\stab}{\mathrm{stab}}
\DeclareMathOperator{\tot}{Tot}
\newcommand{\bra}{\left\langle}
\newcommand{\ket}{\right\rangle}

\newcommand{\conv}{{\rm conv}}

\newcommand{\fa}{\mathfrak{a}}

\newcommand{\fg}{\mathfrak{g}}

\newcommand{\fn}{\mathfrak{n}}

\newcommand{\ft}{\mathfrak{t}}
\newcommand{\fu}{\mathfrak{u}}
\newcommand{\fv}{\mathfrak{v}}

\newcommand{\fU}{\mathfrak{U}}

\newcommand{\cA}{\mathcal{A}}
\newcommand{\cB}{\mathcal{B}}
\newcommand{\norm}[1]{\left\|#1\right\|}

\def\calr{\mathcal{R}}

\def\rank{\operatorname{rank}}

\newcommand{\nin}{\notin}

% Blank box placeholder for figures (to avoid requiring any
% particular graphics capabilities for printing this document).

\usepackage{enumitem}
\newlist{steps}{enumerate}{1}
\setlist[steps, 1]{label = Step \arabic*:}

\begin{document}

\title{On Topologically Big Divergent Trajectories}
% Information for first author
\author{Omri N. Solan}
\address{Einstein Institute of Mathematics, Hebrew University, Israel}
\email{omrinisan.solan@mail.huji.ac.il}
\author{Nattalie Tamam}
\address{Department of Mathematics, University of Michigan, Ann Arbor, Michigan, MI 48109}
\email{nattalie@umich.edu}
\date{}

\begin{abstract}
We study the behavior of $A$-orbits in $G/\Gamma$, when $G$ is a semisimple real algebraic $\QQ$-group, $\Gamma$ is a non-uniform arithmetic lattice, and $A$ is a subgroup of the real torus of dimension $\ge\rk_\QQ G$. 
We show that every divergent trajectory of $A$  diverges due to a purely algebraic reason, solving a long-lasting conjecture of Weiss \cite[Conjecture 4.11]{Weiss}. 
In addition, we examine sets that intersect all $A$-orbits, and show that in many cases every $A$-orbit intersects every deformation retract $X\subset G/\Gamma$. 
This solves the questions raised by Pettet and Souto in \cite{PS}.
The proofs use algebraic and differential topology, as well as algebraic group theory. 
\end{abstract}
\maketitle
\markright{}
\section{Introduction} % (fold)
\label{sec: introduction}
Let $\bG$ be a semi-simple algebraic group over $\QQ$, $G := \bG(\RR)$, and $\Gamma := \bG(\ZZ)$.
Orbits in $G/\Gamma$ have been extensively studied in the last four decades. Many of the results were motivated by their strong connection to various problems in number theory (see \cite{dani, Littlewood, number theory}). 
The study can be roughly divided into two parts. The first part - unipotent orbits, were fully classified in the landmark result of Ratner \cite{ratner}, and behave rather tamely, with known orbit closures and invariant measures. 
The second part - diagonal orbits, seem to behave wildly, with some remarkable phenomenons, see \cite{Maucourant, shapira, T} for surprising counter-intuitive examples, and \cite{Lindenstrauss notes} for a recent state-of-the-art survey. 

In the present work, we study the orbits of diagonal subgroups of $G$ of a `large enough' dimension. 
More explicitly, let $T$ be a maximal $\RR$-torus and $A\subset T$ be a real subgroup which satisfies $\rk_\QQ G\le \dim A \le \rk_\RR G$.
We show that the large dimension of $A$ can be exploited in order to describe the divergent trajectories under the action of $A$. In particular, we show that there are finitely many `reasons' for the divergence of such orbits, which we call obvious (see the definition of obvious divergent trajectory in \S\ref{sec: divergence}). 
Thus, showing the following classification of divergent trajectories, which was conjectured by Weiss \cite{Weiss}, for the action of $A\subset T$ when $G$ is an almost $\QQ$-simple algebraic Lie group:
\begin{itemize}
    \item If $\dim A>\rank_{\QQ}G$, then there are no divergent $A$-orbits. 
    \item If $\dim A = \rank_{\QQ} G$, then the only divergent $A$-orbits are the obvious ones (See Definition \ref{def: obvious} of obvious divergence).
    \item If $\dim A < \rank_{\QQ} G$, then there are non-obvious divergent $A$-orbits. 
\end{itemize}
The first and the third of the above claims were shown in \cite{Weiss2} and \cite{tamam2}, respectively.  
Note that one may deduce a classification for a general algebraic $\QQ$-semisimple Lie group, but a bit more care is needed (see \cite{tamam2}). 

The result we present here generalizes Tomanov and Weiss \cite{TW}, proving the second of the above claims assuming $\rank_\QQ G=\rank_\RR G$. The results in \cite{TW} were also generalized to the $S$-adic setting in \cite{T S-adic}. The second of the above claims was also shown by Hattori \cite[Theorems A, B]{Hattori} when $\rank_\QQ G=1$. 

Our methods generalize Weiss' proof of the first part of the above claim \cite[Corollary]{Weiss2}, see Remark \ref{rem: Barak's proof}. We utilized a similar mythology to that of Weiss, however the existence of divergent trajectories and the attempt to describe them case additional challenges, which we need to address.
A crucial step in \cite{TW} (and a result interesting in its own right) was showing that there is a fixed compact subset of $G/\Gamma$ which intersects every diagonal orbit. This result is false in general for $A$-orbits satisfying $\rk_\QQ(\bG)\le \dim A < \rk_\RR(\bG)$, as was pointed out by Tomanov and Weiss \cite[Example 1]{TW}. In contrast to their example, we will show that for many such subgroups $A$, every $A$-orbit intersects any 
deformation retract of $G/\Gamma$.  

\subsection{Divergence}\label{sec: divergence}

Let $\pi$ be the projection $G\rightarrow G/\Gamma$ defined by $\pi(g):=g\Gamma$ for any $g\in G$.

\begin{de}[Divergence]\label{def: divergence}
We say that an orbit $A\pi(g)$ \emph{diverges} if for every compact set $K\subset G/\Gamma$ there is a compact set $K_A\subset A$ such that $a\pi(g) \nin K$ for every $a\in A\setminus K_A$. 
\end{de}

As was first shown by Dani \cite{dani}, divergent trajectories in some homogeneous dynamical systems are in correspondence with singular vectors (or systems of linear forms), i.e., vectors for which the Dirichlet theorem can be infinitely improved. This correspondence motivated a lot of the research on divergent trajectories, see, e.g., \cite{C2, singular system}.
Since rational vectors are singular in a somewhat `obvious' way, this connection also motivated Dani \cite{dani} and Weiss \cite{Weiss} to distinguish between `obvious' divergent and `non-obvious' trajectories. 

\begin{de}[Obvious divergence]\label{def: obvious}
A trajectory $A\pi(g)$ is said to diverge \emph{obviously} if there exist finitely many rational representations $\varrho_1,\dots,\varrho_k$ and vectors $v_1,\dots,v_k$, where  $\varrho_j: G\rightarrow\GL(V_j)$ and $v_j\in V_j(\QQ)$, such that for any divergent sequence $(a_i)_{i=1}^\infty\subset A$ there exist a subsequence $(a_i')_{i=1}^\infty\subset (a_i)_{i=1}^\infty$ and an index $1\le j\le k$, such that $\varrho_j(a_i'g)v_j\xrightarrow{i\to \infty} 0$. 
\end{de}

It was proved in \cite{Weiss} that obvious divergence indeed implies standard divergence.
We show that if the dimension of $A$ is big enough, then the only divergent trajectories are the obviously divergent ones. 

\begin{thm}\label{thm: divergence classification}
Assume $\dim A= \rk_\QQ(\bG)$ and let $A\pi(g)$ be a divergent trajectory. Then $A\pi(g)$ diverges obviously.
\end{thm}

\begin{remark}[These divergent trajectories are in fact degenerate]
% Degenerate divergence is a more restrictive form of obvious divergence.
In the proof of Theorem \ref{thm: divergence classification} we get the more restrictive description of the divergent trajectories: All the `shrinking' vectors coming from Definition \ref{def: obvious} are conjugates of highest weight vectors, and the corresponding representations are $\QQ$-fundamental representations (see Observation \ref{obs: fundamental representations} for the definition of $\QQ$-fundamental representations). In particular, an orbit $A\pi(g)$, as in Theorem \ref{thm: divergence classification}, is a degenerate divergent trajectory, as defined by Dani in \cite{dani} and generalized by Weiss in \cite{Weiss}. 
\end{remark}

Theorem \ref{thm: divergence classification} solves a conjecture of Weiss \cite[Conjecture 4.11]{Weiss}. Other parts of this conjecture were shown in \cite{tamam2, TW, Weiss2}. 

In \cite{TW} Tomanov and Weiss found a simple algebraic description for all divergent trajectories in the case $A=T$. Such a simple description can not be true in general, as pointed out in \cite[Example 2]{TW}. Here, we present another example showing that any algebraic description in our setting ought to be more complicated. 
% Similar simple algebraic description of divergent trajectories was shown in \cite{TW} for $\QQ$-algebraic groups when $\rk_\QQ(\bG) = \rk_\RR(\bG)$.
\subsection{Set intersection}
The problem of set intersection is the study of finding sets that intersect every orbit of a certain action. Results of this kind can be seen in \cite[Theorem 1.3]{Mc} and \cite[Theorem 1.3]{TW}.

In order to emphasize the strength of the
techniques which we use in order to prove Theorem \ref{thm: divergence classification}, we prove an additional theorem when $\dim A = \rk_\RR(\bG)$ and revisit an example where $\dim A = \rk_\QQ(\bG) < \rk_\RR(\bG)$. 

\begin{de}
A subspace $X_0$ of $X$ is called a \emph{deformation retract} of $X$ if there is a homotopy $F: X\times [0,1]\rightarrow X$ such that for all $x\in X$ and $y\in X_0$,\[
F(x,0)=x,\quad F(x,1)\in X_0,\text{ and}\quad F(y,1)=y.\]
Such a homotopy $F$ is called a \emph{deformation retraction}.

Given $Y, Z \subset X$, we say that $Y$ can be \emph{homotoped away from $Z$} if $Y$ can be deformed via a homotopy so that it does not intersect $Z$.
\end{de}

\begin{thm}\label{thm: intersection}
Assume $\dim A = \rk_\RR(\bG)$.
Let $X_0\subset G/\Gamma$ be a deformation retract and let $x\in G/\Gamma$. Then $Ax\cap X_0\neq\emptyset$.
Moreover, for any bounded map $f: A\to G$ there exists $a\in A$ such that $f(a)ax\in X_0$.
\end{thm}
\begin{remark}
The concepts of bounded map and deformation retract can be generalized. The map $f$ can be replaced by a bounded correspondence of non-zero multiplicity, see Definition \ref{def: correspondence}. The deformation retract can be replaced by the image of a homotopy equivalence $X\to G/\Gamma$. 
\end{remark}
\begin{ex}
There are many examples of deformation retracts to arithmetic homogeneous spaces $G/\Gamma$, and we present some of the known ones here.
The set of well-rounded lattices in $\SL_n(\RR)/\SL_n(\ZZ)$ is a deformation retract. This example can be further extended to the semisimple part of $\GL_n(D)$ for some division algebra $D$ over $\QQ$, see Ash \cite{Ash}. Such deformation retracts are of minimal dimension.
The set of stable lattices in $\SL_n(\RR)/\SL_n(\ZZ)$ (sometimes called semi-stable) is a much larger deformation retract, and the notion can be extended to all arithmetic homogeneous spaces $G/\Gamma$, see Grayson \cite{gr1} for an exposition on stable lattices and \cite{gr2} for the general case.
Another explicit construction in the general case is given by Saper \cite{saper}.
All of these examples are formulated as deformation retracts of the quotient of the symmetric space $X/\Gamma$, where $X = K\backslash G$ and $K$ is the maximal compact subgroup, but the retraction can be lifted to a retraction of $G/\Gamma$.
\end{ex}

By adding a certain condition one can extend Theorem \ref{thm: intersection} to the case when $\rk_\QQ(\bG)\le \dim A < \rk_\RR(\bG)$.
The theorem does not hold for all subgroups $A$ (see Example \ref{ex: compact peripheral orbit}), only for a Zariski open set of them, in the following sense.
Let ${\rm Gr}(\ft,l)$ be the set of all $l$-dimensional subspaces $\fa \subset \ft$, where $\ft$ is the Lie algebra of the torus $T$.
The Grassmannian ${\rm Gr}(\ft,l)$ is a real algebraic variety. 

\begin{thm}\label{thm: alternative intersection}
Let $X_0\subset G/\Gamma$ be a deformation retract, let $l\ge \rk_\QQ(\bG)$, and let $Tx\subset G/\Gamma$ be a trajectory. 
Then there is a nonempty Zariski open subset $U\subset {\rm Gr}(\ft,l)$, which depends only on $\bG$ and $l$, such that if $\operatorname{Lie}(A)\in U$, then the conclusion of Theorem \ref{thm: intersection} holds.
Specifically, $U$ is the set of Lie algebras of subgroups $A$ with good restrictions with respect to $\Psi$, where $\Psi$ is defined as in Proposition \ref{prop: cover locally finite and char lin indp} and the good restrictions property is defined as in Definition \ref{de: has good restriction}. 
\end{thm}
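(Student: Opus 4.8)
The plan is to deduce Theorem \ref{thm: alternative intersection} from Theorem \ref{thm:intersection} by showing that the key topological input used for $\dim A = \rk_\RR(\bG)$ persists for a Zariski-open set of $l$-dimensional subalgebras once $l \ge \rk_\QQ(\bG)$. First I would isolate exactly what role the equality $\dim A = \rk_\RR(\bG)$ plays in the proof of Theorem \ref{thm:intersection}: it guarantees that a certain compactification/boundary construction of $G/\Gamma$ (via the family $\Psi$ of characters, whose components detect escape to the cusps) restricts to the subgroup $A$ in a way that the associated degree or intersection-number argument remains nonzero. Concretely, escape of $a_i x$ to infinity is detected, by Theorem \ref{thm:divergence classification} together with reduction theory, through finitely many $\QQ$-fundamental characters; the map $A \to (\text{product of }\RR_{>0})$ built from these characters must be \emph{proper} and of the right topological degree on the relevant skeleton for the deformation-retract obstruction argument to go through. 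I would package this as the notion ``$A$ has good restrictions with respect to $\Psi$'' (Definition \ref{de: has good restriction}), and the content of the theorem is that (a) this property suffices to rerun the proof of Theorem \ref{thm:intersection} verbatim, and (b) the property is Zariski-open and nonempty in ${\rm Gr}(\ft,l)$.

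For step (a), I would check that the proof of Theorem \ref{thm:intersection} never uses $\dim A = \rk_\RR(\bG)$ except through: the boundedness/properness of the character map restricted to $A$, the nonvanishing of the relevant degree, and the fact (from Theorem \ref{thm:divergence classification}) that all divergence is obvious and governed by $\QQ$-fundamental representations — the last of which holds for \emph{all} $A$ with $\dim A \ge \rk_\QQ(\bG)$, hence in particular here. So the only place the full real rank was used is replaced by the ``good restrictions'' hypothesis, and the bounded-map (or bounded-correspondence) refinement carries over unchanged because it is a formal consequence of the degree argument. For step (b), I would argue as follows: for a fixed finite collection $\Psi = \{\chi_1,\dots,\chi_m\}$ of characters of $T$ (equivalently linear functionals on $\ft$), the condition that their restrictions to $\fa$ span a subspace of the ``correct'' dimension, or more precisely that no ``bad'' coincidence among the $\chi_j|_\fa$ occurs (some $\chi_j|_\fa = 0$, or some problematic linear dependence that would destroy properness of the associated proper map or kill the degree), is the complement of a finite union of proper algebraic subvarieties of ${\rm Gr}(\ft,l)$; hence the good locus $U$ is Zariski-open. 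Nonemptiness follows because a generic $\fa$ of any dimension $l$ meets these linear-algebra conditions in general position — one can exhibit an explicit $\fa$, e.g. a suitably chosen subspace containing the $\QQ$-split part and otherwise in general position with respect to the $\chi_j$, using that $l \ge \rk_\QQ(\bG)$ ensures enough room to accommodate the $\QQ$-data while remaining generic for the rest.

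I expect the main obstacle to be step (b) in its quantitative form: it is not merely that the $\chi_j|_\fa$ be ``nonzero and in general position'' as abstract linear functionals, but that the resulting map from $A$ into a Euclidean space (through which escape to the cusp is measured) remains \emph{proper} and that the degree of the relevant map between boundary complexes stays nonzero. Properness is the delicate point: shrinking $A$ can make the preimage of a compact set under the character map fail to be compact if the restricted characters become too degenerate, and one must verify that avoiding a genuinely algebraic ``bad set'' suffices rather than needing an open dense condition of a subtler (non-algebraic) nature. I would handle this by showing that properness is equivalent to a positivity/cone condition on the $\chi_j|_\fa$ — namely that $0$ lies in the interior of the cone they generate, or the dual condition on the appropriate Weyl chamber — which is an open condition, and then arguing that its failure is cut out algebraically because it corresponds to the $\chi_j|_\fa$ all lying in a common closed half-space, a condition detected by vanishing of certain determinants/minors. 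Matching this cone condition precisely with the combinatorics of the $\QQ$-fundamental representations appearing in Theorem \ref{thm:divergence classification} (via Proposition \ref{prop: cover locally finite and char lin indp}) is where the real care lies, and is the step I would write out in full detail.
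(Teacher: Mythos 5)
Your high-level plan — isolate a genericity condition on $\fa\subseteq\ft$, show it is Zariski open and nonempty, then rerun the argument — is in the right spirit, but the proposal misidentifies what the argument actually is and therefore what the condition needs to buy you. First, a structural point: the paper never proves Theorem~\ref{thm:intersection} on its own; it is the special case $A=T$ of Theorem~\ref{thm: alternative intersection} where the good-restrictions condition is vacuous, so there is no stand-alone proof of Theorem~\ref{thm:intersection} to recycle. Second and more importantly, the engine of the paper's proof is not a properness/degree statement for a character map $A\to(\RR_{>0})^m$ detecting the cusps. It is the homological covering theorem of \S\ref{sub:covering_theorem} (Theorem~\ref{thm:covering} via Corollary~\ref{cor:R_n conv cover}), applied to a cover of the manifold $M$ underlying a bounded \emph{correspondence} $\tilde f=(M,\tau,\xi):A\to G$ built in \S\ref{sub:attaining_a_correspondence}. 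The correspondence machinery (Lemma~\ref{lem: far correspondence}, Corollary~\ref{cor: the actual correspondence}) is what turns ``$Ax$ misses the retract'' into a proper degree-one map $\tau:M\to A\cong\RR^l$ together with an open cover $\{U^M_\fu\}$ of $M$ whose members push forward into bordered sets $\widetilde U_\fu^{K_G,Ag_0}$ cut out by inequalities in characters from $\Psi$. Your proposal has none of this; it replaces it with a vaguely described ``boundary complex'' and a degree argument, which is not the same thing and would have to be constructed from scratch.

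This matters because it changes what the genericity condition must ensure. In the paper, the ``good restrictions'' condition is used once and precisely: Proposition~\ref{prop: cover locally finite and char lin indp}(\ref{point:linearly independent characters}) produces linearly independent characters $\psi_1,\dots,\psi_k\in X(T)$ cutting out any $k$-fold intersection, and good restrictions guarantees that their restrictions to $\fa$ stay independent, which caps the invariance dimension $\invdim$ of $\conv\bigl(\tau(U_{\fu_1}^M\cap\dots\cap U_{\fu_k}^M)\bigr)$ below $n-k+1$. That is exactly the hypothesis Corollary~\ref{cor:R_n conv cover} needs; the contradiction then comes from $l+1>r$ together with Lemma~\ref{lem:no large intersection}, which says no more than $r$ of the $U_\fu$ can meet. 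Your alternative condition — that $0$ lies in the interior of the cone generated by the restricted characters — is a different (open but not linearly algebraic) condition aimed at a different goal (properness of a character map), and it is not equivalent to linear independence of restrictions. Your Step~(b), on the other hand, is essentially fine: failure of good restrictions is cut out by vanishing minors, hence Zariski closed, and a generic $l$-dimensional subspace of $\ft$ works because $l\le\rk_\RR(\bG)=\dim\ft$; the paper in fact just notes openness in the remark after Definition~\ref{de: has good restriction}. The real work you would need to supply is \S\ref{sub:getting_a_compact_subset}--\ref{sub:attaining_a_correspondence} (the compact retract and the correspondence) and the application of Corollary~\ref{cor:R_n conv cover}; without those the proof does not close.
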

Theorem \ref{thm: alternative intersection} generalizes the results in \cite{Mc, S} for arbitrary homogeneous spaces and arbitrary deformation retract, instead of only the sets of well-rounded and stable lattices, as well as a result in \cite{PS}, which shows that compact $A$-orbits cannot be homotoped away from compact sets. In particular, it answers \cite[Questions $2,3$]{PS} positively.
\cite[Question $1$]{PS} is also answered by Theorem \ref{thm: alternative intersection} via the next corollary. 
\begin{cor}\label{cor: un-homotopable compact orbit}
Let $\Gamma, T\subset G$ be as before. 
There exist a $\rk_\QQ(G)$-dimensional subgroup $A\subset T$ and a compact trajectory $Ax\subset G/\Gamma$ which cannot be homotoped away from compact sets. 
\end{cor}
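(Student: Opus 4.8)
The plan is to build the required configuration essentially by hand and then feed it to Theorem~\ref{thm: alternative intersection}. Concretely, I would (i) produce an $l$-dimensional subgroup $A\subseteq T$, where $l=\rk_\QQ(\bG)$, which carries a compact orbit $Ax$ and at the same time satisfies the good-restrictions hypothesis (so that $\operatorname{Lie}(A)\in U$), and then (ii) show, by a covering-space argument, that any homotopy which sweeps a compact orbit off a compact deformation retract of $G/\Gamma$ would produce a \emph{bounded} map $f\colon A\to G$ violating the ``moreover'' clause of Theorem~\ref{thm:intersection}.

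For step (i): let $\bS\subseteq\bG$ be a maximal $\QQ$-split torus, so $\dim\bS=l$, and take $A:=\bS(\RR)^\circ\cong\RR^l$; choosing the maximal real torus $T$ to contain $\bS(\RR)^\circ$ (the standard $\QQ$-adapted choice, available since $\bS(\RR)^\circ$ lies in a maximal real torus of $Z_\bG(\bS)$, which is a maximal real torus of $\bG$) we may assume $A\subseteq T$. It is classical --- the mechanism that produces compact diagonal orbits from anisotropic $\QQ$-tori, cf.\ \cite{TW} and the references therein --- that $A$ admits a compact orbit $Ax=A\pi(g_0)$ in $G/\Gamma$: one takes a $\QQ$-anisotropic $\QQ$-torus $\bH$ with $\bH(\RR)^\circ$ conjugate to $A$ and $\bH(\ZZ)$ Zariski dense, and lets $g_0$ conjugate $\bH(\RR)^\circ$ onto $A$. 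Finally, because the characters attached to the cover $\Psi$ of Proposition~\ref{prop: cover locally finite and char lin indp} are the $\QQ$-rational weights of $\bG$, which are nondegenerate on a maximal $\QQ$-split torus by their very definition, the subgroup $A$ has good restrictions with respect to $\Psi$ in the sense of Definition~\ref{de: has good restriction}, i.e.\ $\operatorname{Lie}(A)\in U$. Making this last assertion precise --- unwinding the definitions of $\Psi$, $U$ and good restrictions and checking that the canonical split position of $A$ is not one of the excluded degenerate positions --- is the \emph{delicate point} of the argument.

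For step (ii): fix a compact deformation retract $X_0\subseteq G/\Gamma$ (for instance a spine, which exists for every such quotient) and suppose, toward a contradiction, that $Ax$ can be homotoped into $G/\Gamma\setminus X_0$, say via $H\colon Ax\times[0,1]\to G/\Gamma$ with $H(\cdot,0)$ the inclusion $Ax\hookrightarrow G/\Gamma$ and $H(Ax\times\{1\})\cap X_0=\emptyset$. Put $\Lambda:=A\cap g_0\Gamma g_0^{-1}$, a cocompact lattice in $A\cong\RR^l$, so that $a\mapsto\pi(ag_0)$ realizes $A$ as the universal cover of $Ax\cong A/\Lambda$. Pull $H$ back to $\widetilde H\colon A\times[0,1]\to G/\Gamma$, $\widetilde H(a,t)=H(\pi(ag_0),t)$, and lift it along the covering $\pi\colon G\to G/\Gamma$ to $\widehat H\colon A\times[0,1]\to G$ with $\widehat H(a,0)=ag_0$; this lift exists since $A\times[0,1]$ is simply connected. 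For $\lambda\in\Lambda$ one has $g_0^{-1}\lambda g_0\in\Gamma$, so $t\mapsto\widehat H(a,t)(g_0^{-1}\lambda g_0)$ and $t\mapsto\widehat H(a\lambda,t)$ are two lifts of the single path $t\mapsto\widetilde H(a,t)=\widetilde H(a\lambda,t)$ which agree at $t=0$; hence they coincide, and in particular $\widehat H(a\lambda,1)=\widehat H(a,1)(g_0^{-1}\lambda g_0)$. Set $f(a):=\widehat H(a,1)(ag_0)^{-1}$. A short computation with the previous identity gives $f(a\lambda)=f(a)$, so $f$ factors through the compact quotient $A/\Lambda$ and is therefore bounded; on the other hand $f(a)\cdot ax=\pi\big(f(a)\,a\,g_0\big)=\pi(\widehat H(a,1))=H(ax,1)\notin X_0$ for every $a\in A$. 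This contradicts Theorem~\ref{thm: alternative intersection}, which (since $\operatorname{Lie}(A)\in U$) supplies some $a\in A$ with $f(a)\,ax\in X_0$. Hence $Ax$ cannot be homotoped into the complement of the compact set $X_0$, which is exactly the asserted impossibility, and the corollary --- answering \cite[Open Question 1]{PS} --- follows.

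The main obstacle is step (i): producing a single $l$-dimensional $A\subseteq T$ that simultaneously carries a compact orbit and lies in $U$ --- the set of subgroups with compact orbits being, a priori, a rather thin subset of ${\rm Gr}(\ft,l)$ --- and this is where the structure theory (maximal $\QQ$-split tori, the cover $\Psi$, good restrictions) does the real work. By contrast everything in step (ii) --- the covering-space lifting, the cocycle identity, and the extraction of the bounded map $f$ --- is routine bookkeeping. In the split case $\rk_\QQ(\bG)=\rk_\RR(\bG)$ the Grassmannian ${\rm Gr}(\ft,l)$ degenerates to a point, $A=T$, $U$ is everything, and step (i) is immediate; this already yields the statement for split $\bG$.
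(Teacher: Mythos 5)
Your step (ii) --- the covering-space lifting that converts a homotopy of the compact orbit into a bounded map $f\colon A\to G$ with $f(a)ax\notin X_0$ for all $a$, thereby contradicting the ``moreover'' clause of Theorem~\ref{thm: alternative intersection} --- is correct and makes explicit a bridge the paper leaves implicit. The problem is step (i), which the paper does in a fundamentally different (and far more robust) way, and where your proposed construction actually \emph{fails}.

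You take $A:=\bS(\RR)^\circ$ for $\bS$ a maximal $\QQ$-split torus, and assert that $A$ has good restrictions because ``the characters attached to the cover $\Psi$\dots are the $\QQ$-rational weights of $\bG$, which are nondegenerate on a maximal $\QQ$-split torus.'' That is false. By construction in Proposition~\ref{prop: cover locally finite and char lin indp}, $\Psi=\bigcup_j\Phi_j$ consists of the \emph{$\RR$-weights} of the $\QQ$-fundamental representations, i.e.\ characters of the full real torus $T$, and many distinct $\RR$-weights collapse (or even vanish) when restricted to $\bS$. The paper's own Counterexample~\ref{cex: compact peripheral orbit} exhibits exactly this: $G=\SL_4(\RR)$, $\rk_\QQ=1<3=\rk_\RR$, the $\QQ$-split torus $S$ has a compact orbit $Smg_s\Gamma$, and yet this compact orbit \emph{can} be homotoped away from compact sets. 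So $\operatorname{Lie}(\bS(\RR)^\circ)$ is not in $U$; concretely, weights such as $s_1+s_3$ (present in $\Phi_1$ for the $\bigwedge^2$-type representation) restrict to $0$ on $S=\{\exp(\diag(s,s,-s,-s))\}$, violating the good-restrictions condition even in the $l=1$ case.

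The paper avoids pinning down a specific $A$ altogether. It invokes Prasad--Raghunathan to get a compact orbit $Tg_0\Gamma$ of the \emph{full} torus $T$, so $T\cap g_0\Gamma g_0^{-1}$ is a full-rank lattice in $T$; the $r$-dimensional subgroups of $T$ meeting this lattice in rank $r$ (equivalently, carrying a compact orbit through $g_0\Gamma$) form a \emph{dense} subset of ${\rm Gr}(\ft,r)$, and a dense set must meet the nonempty Zariski open $U$ supplied by Theorem~\ref{thm: alternative intersection}. This density-versus-Zariski-open argument is the real content of step (i), and it buys genericity for free, whereas your attempt to produce one explicit $A$ runs straight into the obstruction the paper's counterexample was designed to illustrate.
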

Corollary \ref{cor: un-homotopable compact orbit} can not be extended to every compact \linebreak $\rk_\QQ(G)$-dimensional trajectory, as shown by the following example, originally introduced in \cite[Example 1]{TW}, although without the compactness of the orbit $Ax$.
\begin{ex}
\label{ex: compact peripheral orbit}
There exists an algebraic group $\bG$ with \linebreak $\rk_\QQ(\bG)=1 < \rk_\RR(\bG)=3$, such that for the maximal $\QQ$-split $\QQ$-torus in $G$, denoted $S$, and some $x\in G/\Gamma$ the following holds. 
The orbit $Sx$ is compact and can be homotoped away from compact sets using group elements; that is, for every compact set $K\subset G/\Gamma$ there is $g\in G$ such that $gSx\cap K = \emptyset$. 
\end{ex}
Example \ref{ex: compact peripheral orbit} explicitly described in \S \ref{sec: counter_example}. 

% section introduction (end)
The following result is obtained by combining the arguments used in the proofs of Theorems \ref{thm: divergence classification} and \ref{thm: alternative intersection}:
\begin{prop}\label{prop: deformation retract intersection}
    An $A$-orbit $A\pi(g)$ intersects any deformation retract if one of the following conditions is satisfied:
    \begin{itemize}
        \item $A\pi(g)$ diverges.
        \item 
        For any $\QQ$-fundamental representation $\varrho: G\rightarrow \GL(V)$, any $v\in V(\QQ)$ which equals $\bp_\fv$ for some unipotent radical $\fv$, and any unbounded sequence $\{a_n\}\in A$, the set $\{\|\varrho(a_ng)v\|\}$ is also unbounded, see \S\ref{sec: compactness criterion} for a discussion on these objects.
    \end{itemize}
\end{prop}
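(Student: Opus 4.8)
The plan is to reduce the Proposition to the two theorems already stated, Theorem~\ref{thm:divergence classification} and Theorem~\ref{thm: alternative intersection}, by observing that the hypotheses in the two bullets exactly separate the two regimes that the intersection arguments cover. In the first bullet, the orbit $A\pi(g)$ diverges, so Theorem~\ref{thm:divergence classification} applies: the divergence is obvious, and in fact, by the remark following that theorem, the shrinking vectors are conjugates of highest weight vectors in $\QQ$-fundamental representations. I would then invoke the intersection mechanism built in the proof of Theorem~\ref{thm: alternative intersection}: the argument there is topological (degree theory for a bounded correspondence against a deformation retract) and the only place the hypothesis ``$\operatorname{Lie}(A)\in U$'' is used is to guarantee that the relevant family of compactness functions $\|\varrho(a g)\bp_\fv\|$ does not all go to zero along some unbounded direction, i.e. to rule out an ``escape'' of the whole orbit through one peripheral cusp. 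When the orbit diverges obviously via highest weight vectors, one already controls precisely how the orbit leaves compact sets, and this control is what the topological argument needs; so the same degree computation forces $Ax$ to meet $X_0$.

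For the second bullet the hypothesis is, essentially by definition, the condition ``good restrictions with respect to $\Psi$'' restricted to the particular orbit $Ax$ rather than being a Zariski-generic statement about $\operatorname{Lie}(A)$: for every $\QQ$-fundamental representation $\varrho$ and every $v=\bp_\fv$ associated to a unipotent radical, the sequence $\{\|\varrho(a_n g)v\|\}$ stays unbounded along any unbounded $\{a_n\}\subseteq A$. I would go back to the proof of Theorem~\ref{thm: alternative intersection} and isolate the lemma that says: given a bounded correspondence $f$ of nonzero degree and a deformation retraction $F$ of $G/\Gamma$ onto $X_0$, if $Ax$ avoided $X_0$ then one could push the fundamental class of $A$ (or of its one-point compactification) into $X_0$ and derive a contradiction with the degree being nonzero, \emph{provided} the orbit is ``contained in finitely many cusps in a controlled way'', which is exactly what the non-vanishing-at-infinity of the $\|\varrho(a_ng)v\|$'s provides via the compactness criterion of \S\ref{sec:compactness criterion}. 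The point is that this lemma never used genericity of $\operatorname{Lie}(A)$ per se, only the consequence that the compactness functions behave correctly; so it applies verbatim under the stated hypothesis on $A\pi(g)$.

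Concretely, the steps I would carry out are: (i) recall from \S\ref{sec:compactness criterion} the statement that an orbit stays in a compact set (or leaves it in a controlled, ``peripheral'' manner) iff a finite collection of functions of the form $\|\varrho(ag)\bp_\fv\|$, over $\QQ$-fundamental $\varrho$ and unipotent radicals $\fv$, are bounded below and above appropriately; (ii) in case~1, apply Theorem~\ref{thm:divergence classification} plus the ensuing remark to get that divergence is obvious through highest weight vectors, and check that ``obvious via highest weight vectors'' implies the controlled-escape condition from (i); (iii) in case~2, observe the stated hypothesis is literally the controlled-escape condition; (iv) feed (ii) or (iii) into the topological degree argument extracted from the proof of Theorem~\ref{thm: alternative intersection}, with the bounded map $f$ taken to be constant (or general bounded) and the correspondence of degree~$1$, to conclude $A\pi(g)\cap X_0\neq\emptyset$.

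The main obstacle I anticipate is step~(ii): verifying that obvious divergence through conjugates of highest weight vectors of $\QQ$-fundamental representations really does match the precise hypothesis needed by the intersection argument in the second bullet. Obvious divergence a priori only gives that \emph{some} $\|\varrho_j(a_i'g)v_j\|\to 0$ along a subsequence, whereas the intersection machinery wants to understand \emph{all} the compactness functions $\|\varrho(a_ng)\bp_\fv\|$ simultaneously, and wants lower bounds, not just that one of them vanishes. Bridging this gap requires the structural input from the remark after Theorem~\ref{thm:divergence classification} — that the divergence is ``degenerate'' in Dani's sense — together with the relation between highest weight orbits and the parabolic/unipotent-radical data indexing the cusps; I expect this is where one has to be careful, possibly splitting $A$ into the directions along which various $\|\varrho(a_ng)\bp_\fv\|$ are bounded versus unbounded and checking these are consistent with a single peripheral face of the Borel--Serre-type compactification. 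Once that bookkeeping is done, the topological conclusion is immediate from the already-proven theorems.
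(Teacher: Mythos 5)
Your high-level plan --- push the orbit into the cusps via the bounded correspondence of \S\ref{sub:attaining_a_correspondence} and run the covering machinery of \S\ref{sub:covering_theorem} --- is the right one (the paper only asserts the Proposition follows by ``combining the arguments,'' without details), but you have misidentified the mechanism behind both bullets, and this is a genuine gap. The second bullet is \emph{not} the good-restrictions hypothesis of Definition \ref{de: has good restriction} restricted to the orbit: good restrictions is a linear-algebra condition on $\operatorname{Lie}(A)$ alone (independence of the restricted characters in $\Psi$) and says nothing about the trajectory, while bullet two is an analytic condition on $A\pi(g)$. What bullet two actually buys, unwinding Corollary \ref{cor: compact trajectory perturbation}, is that every set $\widetilde U^{K,Ag}_\fu$ is \emph{bounded}: an unbounded sequence $\{a_n\}\subset \widetilde U^{K,Ag}_\fu$ would satisfy $\psi(a_n)>d_\psi$ for all $\psi\in\Psi_\fu$, forcing $\|\varrho_j(a_ng)\bp_\fu\|=\max_\mu e^{\mu(a_n)}\|\varphi_\mu(\varrho_j(g)\bp_\fu)\|$ to stay bounded (every nonzero term has $-\mu\in\Psi_\fu$), contradicting the hypothesis. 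Bounded convex sets have invariance dimension $0$, so the dimension condition in Corollary \ref{cor:R_n conv cover} holds automatically and good restrictions are never touched.

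For the first bullet you cannot route through obvious divergence and then into the bullet-two machinery; as you yourself flag in step (ii), divergence says some $\|\varrho_j(a g)\bp_\fv\|\to 0$, the \emph{opposite} of what bullet two supplies, and there is no way to bridge this. The correct mechanism is the one in \S\ref{sec: bordered sets}: since $A\pi(g)$ diverges and the correspondence is uniformly bounded, Proposition \ref{prop: cover locally finite and char lin indp} lets you take the $E$-sets to be $(\Psi,f)$-bordered with a \emph{divergent} $f$, and Corollary \ref{cor:bordered are trivial} shows those are $k$-trivial for every $k>0$ even when unbounded --- which is exactly the triviality input Theorem \ref{thm:covering} needs, replacing the boundedness used in bullet two. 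In either case Theorem \ref{thm:covering} then yields $\dim A+1 > r$ sets of the cover with nonempty intersection, contradicting Lemma \ref{lem:no large intersection}; the remark about degenerate divergence via highest weight vectors and the Borel--Serre-style bookkeeping you propose in step (ii) are not needed.
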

The ideas of the proof can be found in Remark \ref{rem: proof of cor somthing}.

\subsection{Further research} % (fold)
\label{sub: further_research}
Although we provide a characterization of divergent trajectories, it is interesting to show a more quantitative result.
\begin{conj}
For every compact sets $K\subset K_1\subset G/\Gamma$ there is a compact set $K_2\subset G/\Gamma$ with the following property. 
If a trajectory $Ak$, $k\in K$, eventually exits $K_2$, then it exits $K_1$ for finitely many representational witnesses.
That is, if the set $\{a\in A: ak\Gamma\in K_2\}$ is compact, then there exist a finite set of rational representations $\varrho_1,\dots,\varrho_m$, rational vectors $v_1,\dots,v_m$, and open subsets $U_1,\dots, U_m$, where $\varrho_j: G\rightarrow\GL(V_j)$, $v_j\in V_j(\ZZ)$, and $0\in U_j\subset V_j$, such that:
\begin{itemize}
    \item 
    for all $1\le j\le m$, $g\in G$, \[\varrho_j(g)v_j\in U_j \implies \pi(g)\nin K_1, \text{ and} \]
    \item the set 
    $\{a\in A:\forall j. ~ \varrho_j(ak)v_j\nin U_j\}$ is compact.
\end{itemize}
\end{conj}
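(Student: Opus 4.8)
The plan would be to promote the proof of Theorem~\ref{thm:divergence classification} to a semi-uniform statement; the point is that that theorem already pins the ``reasons'' for leaving a compact set to a fixed finite list of $\QQ$-fundamental representations and conjugates of highest-weight vectors. First fix, once and for all, the reduction-theoretic data of $K_1$: by the compactness criterion of \S\ref{sec:compactness criterion}, choose $\QQ$-fundamental representations $\varrho_j\colon G\to\GL(V_j)$ and vectors $v_j=\bp_{\fv_j}\in V_j(\ZZ)$ for $1\le j\le N$, one for each $\Gamma$-conjugacy class of proper $\QQ$-parabolic of $\bG$, and a threshold $\veps_1>0$, so that the cusp regions $C_j(\veps)=\{g\Gamma:\|\varrho_j(g\gamma)v_j\|<\veps\text{ for some }\gamma\in\Gamma\}$ satisfy, for every $0<\veps\le\veps_1$, both $C_j(\veps)\cap K_1=\emptyset$ and $G/\Gamma\setminus K_1\subseteq\bigcup_jC_j(\veps_1)$. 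With $U_j$ a small ball around $0\in V_j$, property~(i) of the conjecture is then automatic for any sub-collection, so the whole content is property~(ii). Finally choose $K_2\supseteq K_1$ large, together with a matching depth $\veps\ll\veps_1$, so that every $g\Gamma\notin K_2$ lies in some $C_j(\veps)$ with a definite clearance -- a lower bound on $-\log\|\varrho_j(g\gamma)v_j\|$ that the combinatorial step below can exploit; the existence of such nested Siegel data is standard.

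Now fix $k\in K$ with $L_k:=\{a\in A:ak\Gamma\in K_2\}$ compact. For $a\in A\setminus L_k$ the point $ak\Gamma$ lies outside $K_2$, hence in some $C_{j(a)}(\veps)$, witnessed by some $\gamma(a)\in\Gamma$; put $w_a:=\gamma(a)v_{j(a)}\in V_{j(a)}(\ZZ)$, so that $\|\varrho_{j(a)}(ak)w_a\|<\veps$. The entire proof then reduces to one claim: as $a$ ranges over $A\setminus L_k$, only finitely many distinct pairs $(\varrho_{j(a)},w_a)$ occur. Granting it, those finitely many pairs, with their $\veps$-balls, are the required witnesses: (i) holds since each ball lies in the corresponding $C_{j(a)}(\veps_1)\subseteq G/\Gamma\setminus K_1$, and (ii) holds since for every $a\notin L_k$ one of these finitely many witnesses already fires at the \emph{fixed} lift $ak$, whence $\{a:\text{no witness fires}\}\subseteq L_k$ is compact. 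A final compactness argument over $k\in K$ would, if desired, upgrade the per-$k$ list to one depending only on $\bG$ and $K_1$.

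To establish the finiteness claim one adapts the combinatorial core of the proof of Theorem~\ref{thm:divergence classification}. On the locus where a given pair $(\varrho,w)$ is active, $-\log\|\varrho(ak)w\|$ equals, up to an additive error bounded because $k$ stays in a compact set, the value at $a\in\fa$ of a linear functional $\chi_{(\varrho,w)}$ -- the restriction to $\fa$ of a weight of $\varrho$ -- and the clearance built into $K_2$ forces this value to be large there; hence for every $a\in\fa$ outside a large ball one has $\max\chi_{(\varrho,w)}(a)>0$ over the active pairs. The local finiteness and linear-independence statements of Proposition~\ref{prop: cover locally finite and char lin indp}, together with the ``good restrictions'' property (Definition~\ref{de: has good restriction}) -- which in this unconstrained setting must be extracted from $\dim A\ge\rk_\QQ(\bG)$ exactly as in the proof of Theorem~\ref{thm:divergence classification} -- should bound the number of distinct functionals $\chi_{(\varrho,w)}$. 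It remains to bound the number of $\Gamma$-translates $w$ sharing a given pair $(\varrho,\chi)$: here one uses that the relevant $w$ are translates of the highest-weight vectors $\bp_{\fv_j}$ and that, along any ray in $\fa$, the orbit converges in the flag variety $G/P_j$, so in the open chambers of the finite hyperplane arrangement cut out by the $\chi$'s only one translate arises; connectedness of $A$ minus a large ball when $\dim A\ge2$, and a direct two-rays argument when $\rk_\QQ(\bG)=1$, glue the chamber-by-chamber pictures.

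The main obstacle is exactly the finiteness claim. In Theorem~\ref{thm:divergence classification} the orbit \emph{diverges}, so it ultimately commits to a bounded set of cusps and the combinatorial dichotomy need only be run ``at infinity''; here the orbit is assumed only to leave the fixed set $K_2$, so a priori it could migrate through infinitely many cusps, and infinitely many $\Gamma$-translates within a cusp, as $a\to\infty$. Ruling this out demands a genuinely quantitative, uniform form of the weight/rate dichotomy behind the main theorem: that the pattern of which pair $(\varrho,w)$ is active is governed by a fixed finite hyperplane arrangement in $\fa$ depending only on $\bG$, with uniform control of the transition regions between chambers. It is this effectivisation, rather than any fresh algebraic ingredient, that the conjecture really requires.
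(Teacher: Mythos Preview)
The statement you are attempting is listed in the paper as an open \emph{conjecture} (in \S\ref{sub:further_research}); there is no proof in the paper to compare against. Your write-up is therefore best read as a strategy sketch for attacking an open problem, and you yourself flag in the last paragraph that the central finiteness claim is not established. That honest assessment is correct: what you have is an outline, not a proof.

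Two more specific remarks on the outline. First, you write that the ``good restrictions'' property of Definition~\ref{de: has good restriction} ``must be extracted from $\dim A\ge\rk_\QQ(\bG)$ exactly as in the proof of Theorem~\ref{thm:divergence classification}.'' This misreads the paper: the proof of Theorem~\ref{thm:divergence classification} in \S\ref{sec: proof of ivergence classification} does \emph{not} use good restrictions at all. It goes through the bordered-set machinery of \S\ref{sec: bordered sets} and Theorem~\ref{thm:covering_bordered}, which works for arbitrary $A$; good restrictions enter only in the proof of Theorem~\ref{thm: alternative intersection}, where the convex-hull version (Corollary~\ref{cor:R_n conv cover}) is used instead. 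So you cannot hope to import that hypothesis from the divergence argument, and indeed good restrictions can genuinely fail for the $A$ under consideration.

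Second, and more seriously, the mechanism that makes the finiteness go through in the divergent case is precisely that the function $f$ in Proposition~\ref{prop: cover locally finite and char lin indp} can be taken \emph{unbounded}; this is what gives the $(\Psi,f)$-bordered sets the cohomological triviality of Corollary~\ref{cor:bordered are trivial} and drives Theorem~\ref{thm:covering_bordered}. Under your hypothesis the orbit merely leaves $K_2$, so $f$ is only bounded below, the ``bordered'' sets degenerate to ordinary polyhedral half-space intersections, and the covering argument loses its force. Your final paragraph correctly identifies that a quantitative, uniform replacement for this step is the missing ingredient; but the sketch preceding it does not supply one, and nothing in the paper does either. As written, the proposal is a plausible plan of attack with the decisive step still open.
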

A stronger version of the above conjecture holds for the case $\rank_\QQ(G) = \rank_\RR(R)$, see \cite[Theorem 1.3]{TW}. We believe that using the methods presented in this manuscript one should be able to prove it in the setting of Theorem \ref{thm: alternative intersection}, that is, if $A\subseteq T$ has good restrictions (see Definition \ref{de: has good restriction}). 

Another task is to provide a better classification of the divergent trajectories, namely, determine which vectors in the fundamental representation are needed in Theorem \ref{thm: divergence classification}.
\begin{conj}\label{conj: divergent trajectories}
Let $S$ denote the rational torus and $P_1,..., P_r$ the different maximal parabolic subgroups containing $S$, and $\fv_1,...,\fv_r$ the corresponding Lie algebras of the unipotent radicals. Then, for every $A$ with $\dim A = \rk_\QQ(\bG)$ and any divergent trajectory $A\pi(g)$ for $g\in G$, there exists $g_\QQ\in \bG(\QQ)$ such that the orbit $A\pi(g)$ satisfies the obvious divergence property using the vectors $\varrho_1(g_\QQ) \bp_{\fv_1},\dots,\varrho_r(g_\QQ) \bp_{\fv_r}$  (see \S\ref{sec: compactness criterion} for the definitions of $\bp_\fv$ for a unipotent radical Lie-algebra $\fv$ and the representations $\varrho_1,\dots,\varrho_r$ in this setting). 
\end{conj}

In \S\ref{sec: counter_example} we show that Conjecture \ref{conj: divergent trajectories} holds for a specific example, see Proposition \ref{prop: example divergent classification}. 

% subsection further_research (end)

\subsection{Overview of the paper}
The main theorems are shown by using a construction of a cover for $A$ (as a manifold) and by analyzing its possible properties. 
We use techniques from different mathematical fields.
\begin{itemize}
\item {\bf Algebraic group theory:}
The goal of \S\ref{sec: bounded Bruhat}-\ref{sec: the cover} is to construct a cover of $A$ given a trajectory.
This cover is constructed in \S\ref{sec: the cover}, using a compactness criterion shown in \cite{TW}, which is discussed in \S \ref{sec: compactness criterion}. 
In \S\ref{sec: the cover} we also show some properties of the cover: its covering number and an algebraic description of it. 
A main tool in obtaining an algebraic description of the sets in the cover is Theorem \ref{thm: bounded Bruhat decomp}, which is a bounded Bruhat-type theorem and is proved in \S \ref{sec: bounded Bruhat}. 
\item {\bf Homological algebra:}
In \S \ref{sub: covering_theorem} we disprove the existence of certain coverings of manifolds with low covering number. This part is mostly independent of the rest of the paper. The main result of this section is Theorem \ref{thm: covering}, which is proved using cohomology theory. We analyze a certain nontrivial cycle and find an equivalent cycle that is supported on the intersection of many open sets. 
The technique is a direct generalization of the topological tools given at used in \cite{S}, themselves a simplification of the topological tools used in \cite{Mc}.
\item {\bf Geometry:}
In \S \ref{sec: bordered sets} we analyze a certain kind of shapes that constitute the cover corresponding to divergent trajectories. 
We show that they are contractible or empty even if we subtract an arbitrarily large bounded set.
\item {\bf Differential topology:}
In \S\ref{sec: proof_of_theorem_thm: intersection} we prove Theorem \ref{thm: alternative intersection} by deforming the configuration we have. We start with an orbit-like set  $\{f(a)ax: a\in A\}$, for some bounded correspondence $f: A\to G$, which does not intersect a given deformation retract $X_0$. We use it to find another orbit-like set $\{\tilde{f}(a)ax: a\in A\}$ which does not intersect a potentially much larger set $K_0$ which appears naturally in the cover theorem. Then, using the latter, we get a contradiction. 
\end{itemize}

Theorems \ref{thm: divergence classification} and \ref{thm: alternative intersection} are proven in \S\ref{sec: proof of divergence classification} and \S\ref{sec: proof_of_theorem_thm: intersection}, respectively.

In \S\ref{sec: counter_example} we discuss \cite[Example 1]{TW}, which shows that the consequences of Theorem \ref{thm: intersection} do not hold when $\rk_\QQ(\bG)<\rk_\RR(\bG)$.

\subsection*{Acknowledgment}
We would like to thank B. Weiss for familiarizing us with the problem and for suggesting that we should team up to tack it. 
The first author would like to thank his father E. Solan for helpful discussions.
The second author would also like to thank R. Spatzier for his interest in this project and for helpful discussions. 
We also want to thank the anonymous referees for several important suggestions. 
This work is part of the Ph.D. thesis of the second author.

\section{\label{sec: bounded Bruhat} A bounded Bruhat type decomposition}

All square matrices admit an $LU$ factorization with partial pivoting. That is, for any square matrix $A$ there exists a permutation matrix $P$, a lower triangular unipotent matrix $L$ with all entries bounded by $1$, and an upper triangular matrix $U$, such that $A=PLU$ (see \cite[Lecture 21]{LU decom}). The goal of this section is to prove Theorem \ref{thm: bounded Bruhat decomp}, which is an analog of this factorization for a general Lie group, using the Bruhat decomposition. 

We use standard notation from the theory of linear algebraic groups, see \cite{borel,knapp}. 

Let $\Delta_{\RR}$ be an $\RR$-simple system of the $\RR$-root system $\Phi_\RR$ of $G$. 
Let $\Phi^+_\RR$ be the set of positive $\RR$-roots defined by $\Delta_{\RR}$.
For $\lambda\in\Phi_{\RR}$, denote by $\fg_{\lambda}$ the $\RR$-root space for $\lambda$. 

Let \begin{equation}\label{eq: N,B defn}
	\fn:=\bigoplus_{\lambda\in\Phi_{\RR}^+}\fg_{\lambda},\quad N:=\exp(\fn),\quad B:=N_G(N).
\end{equation}
Then, $B$ is a Borel subgroup.

Let $W(\Phi_{\RR})$ be the $\RR$-Weyl group of $G$. 
According to \cite[\S VI.5]{knapp} $W(\Phi_{\RR})$ acts simply transitively on $\RR$-simple systems. 
In particular, there exists a unique $w_0\in W(\Phi_{\RR})$ such that 
\begin{equation}\label{eq: w0 defn}
	w_0(\Delta_{\RR})=-\Delta_{\RR}. 
\end{equation}
By \cite[\S VI.5]{knapp}, the Weyl group satisfies $W\left(\Phi_{\RR}\right)\cong N_{G}\left(T\right)/Z_{G}\left(T\right)$.
For every $w\in W\left(\Phi_{\RR}\right)$, let $\bar{w}$ be a representative of $w$ in $N_{G}\left(T\right)$. 

\begin{thm}\label{thm: bounded Bruhat decomp}
	There exists a compact set $N_0\subset N$ such that $$G=W(\Phi_{\RR})N_0\bar{w}_0B.$$  
\end{thm}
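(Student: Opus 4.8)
The plan is to reduce the statement to the classical Bruhat decomposition and then control the $N$-part by a compactness argument using the action of the torus. First I would recall that the Bruhat decomposition gives $G = \bigsqcup_{w\in W(\Phi_\RR)} B\bar w B$, and that each double coset admits the sharper form $B\bar w B = N \bar w B$; combining these, $G = \bigcup_{w} N\bar w B$. The goal is to show that in each piece the $N$-factor may be taken from a fixed compact set, after possibly absorbing an element of $W(\Phi_\RR)$ and the fixed Weyl element $\bar w_0$. So it suffices to show: there is a compact $N_0\subset N$ such that for every $n\in N$ and every $w\in W(\Phi_\RR)$ we have $n\bar w B \subseteq W(\Phi_\RR)\,N_0\,\bar w_0 B$.

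The key mechanism is contraction by a one-parameter subgroup. Write $n = \exp(X)$ with $X\in\fn = \bigoplus_{\lambda\in\Phi_\RR^+}\fg_\lambda$. Choose a regular element $H$ in the (negative) Weyl chamber, i.e. $\lambda(H) < 0$ for all $\lambda\in\Phi_\RR^+$, and set $a_t = \exp(tH)$. Then $a_t n a_t^{-1} = \exp(\Ad(a_t)X) \to e$ as $t\to +\infty$, so there is $t = t(n)$ with $a_t n a_t^{-1} \in N_0$ for a fixed compact neighborhood $N_0$ of $e$ in $N$. Now $a_t\in T\subseteq B$, so $n = a_t^{-1}(a_tna_t^{-1})a_t \in B\,N_0\,B$, hence $n\bar w B \subseteq B N_0 B \bar w B$. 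The issue is that this only lands us back in $B N_0 B \bar w B$, and I need to convert the outer $B$-factors into a Weyl element times $\bar w_0$. Here I would use that $N_0$ can be chosen inside $N\cap \bar w_0 N^- \bar w_0^{-1}$-type pieces, or more cleanly: since $T$ normalizes $N$ and preserves $N_0$ up to possibly enlarging it on the compact side, and since $\bar w_0$ conjugates $N$ to $N^-$ (the opposite unipotent), one rewrites $B = T N$ and pushes the $N$ on the left through $\bar w_0$: the positive unipotent part that $\bar w_0$ does not flip gets absorbed into $\bar w_0 B$, while the flipped part must be handled by a further Bruhat-type collapse. The cleanest route is: first establish $G = N^- \bar w_0 B$ where $N^- = \exp(\bigoplus_{\lambda\in\Phi_\RR^+}\fg_{-\lambda})$ is the big cell times the rest, i.e. $G = \overline{N\bar w_0 B}$ decomposed over $W$, giving $G = \bigcup_w N \bar w \bar w_0 B$ after noting $\bar w B \bar w_0 = \bar w \bar w_0 (\bar w_0^{-1} B \bar w_0)$ and controlling the conjugate Borel.

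The main obstacle, as I see it, is the bookkeeping of \emph{which} unipotent directions get contracted versus expanded under the chosen torus element, and ensuring the compact set $N_0$ can be chosen \emph{uniformly} over all Weyl chambers simultaneously — not just for the negative chamber. Concretely, for a given $n\in N$ there may be no single $a_t$ in a fixed chamber that contracts $n$ into $N_0$ while also keeping the residual $B$-factors controlled after multiplying by $\bar w$; one likely needs to choose the contracting direction depending on $w$, i.e. use $\bar w^{-1} a_t \bar w$, and then the element $a_t$ moves across chambers and one must check that the product $\bar w a_t^{-1}$ still lies in $W(\Phi_\RR)\cdot(\text{compact})\cdot\bar w_0 B$. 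I expect this to be resolved by the observation that $T$ is generated (as a group, up to a compact factor) by the translates of a single negative-chamber ray under $W$, together with the fact that the compact torus $T\cap K$ is itself compact and can be swallowed into $N_0$ after conjugating; the identity $W(\Phi_\RR)\cong N_G(T)/Z_G(T)$ and simple transitivity on simple systems (already recalled in the text) is exactly what lets the stray Weyl-translates be collected into the prefactor $W(\Phi_\RR)$. The remaining verifications — that $\Ad(a_t)$ contracts $\fn$, that $N_0$ can be taken compact, that $T\cap K$ is compact — are routine and I would not grind through them.
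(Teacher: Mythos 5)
Your contraction argument has a genuine gap that you flag but do not resolve, and I do not believe it can be resolved along the lines you sketch. When you write $n = a_t^{-1}(a_t n a_t^{-1})a_t$ and push $a_t$ into $\bar w B$ via $a_t\bar w = \bar w(\bar w^{-1}a_t\bar w)$, you are left with $n\bar w B = a_t^{-1}\,n_0\,\bar w B$ with $n_0\in N_0$. The element $a_t^{-1}$ sits on the \emph{left}, is unbounded in $T$ as $t\to\infty$ (and $t=t(n)$ must blow up as $n$ does), and there is nowhere for it to go: it is not a Weyl representative so it cannot join the $W(\Phi_\RR)$ prefactor, it is not in $N$ so it cannot join $N_0$, and conjugating it past $n_0$ on the right exactly undoes the contraction that produced $n_0$. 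Your later remarks about absorbing it using $T\cap K$ or by choosing the contracting direction per chamber do not touch the problem, because $a_t^{-1}$ lives in the noncompact split part of $T$. In short, the contraction trick does not give you compactness of the $N$-component for free; it just trades an unbounded $N$-factor for an unbounded $T$-factor.

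The paper takes a different and, in my view, cleaner route that avoids any contraction. It enlarges each Bruhat cell: writing $N_w := \bar w^{-1}N\bar w$, the identity $N_w = (N_w\cap N_{w_0})\cdot(N_w\cap N)$ (from \cite[\S 14.4]{borel}, since the two Lie algebras decompose $\fn_w$) shows $\bar w N_w B \subseteq \bar w N_{w_0} B = \bar w\bar w_0^{-1}N\bar w_0 B$. Reindexing over $W$ gives $\bigcup_w \bar w N\bar w_0 B = G$. Projecting to the flag variety $G/B$, each $\bar w N\bar w_0 p_0$ is a translate of the big cell, hence open, and these open sets cover the \emph{compact} space $G/B$. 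One then shrinks the open cover to a closed (hence compact) subcover; since $N\to N\bar w_0 p_0$ is a homeomorphism, each compact piece pulls back to a compact $N_{0,w}\subset N$, and $N_0 := \bigcup_w N_{0,w}$ works. The uniformity you were hoping to get from a clever choice of torus element is obtained instead from the global topological fact that $G/B$ is compact. If you want to salvage your attempt, I would abandon the contraction idea entirely and instead look for the factorization of $N_w$ and the compactness of $G/B$.
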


Let $\varrho: G\rightarrow\GL(V)$ be an $\RR$-highest weight representation, i.e., an $\RR$-representation with highest weight defined over $\RR$. Denote the $\RR$-highest weight of $\varrho$ by $\chi$. There is a direct sum decomposition 
\begin{equation}
	V=\bigoplus_{\lambda\in\Phi_{\varrho}}V_{\lambda},\label{eq: V rho decomp}
\end{equation}
where $\Phi_{\varrho}$ is the set of $\RR$-weights for $\varrho$, and for any $\lambda\in\Phi_{\varrho}$, $V_{\lambda}$ is the $\RR$-weight vector space for $\lambda$. For any $\lambda\in\Phi_{\varrho}$ let $\varphi_{\lambda}: V\rightarrow V_{\lambda}$ be the projection associated with the decomposition \eqref{eq: V rho decomp}. Let $\Phi_{\varrho}^+$ be the set of positive $\RR$-weights for $\varrho$, where the order is defined with $\Delta_{\RR}$. 

For any $\lambda\in\Phi_{\varrho}$ and $w\in W\left(\Phi_{\RR}\right)$ we have 
\begin{equation}\label{eq: weyl act weight}
	\varrho\left(\bar{w}\right)V_{\lambda}=V_{w\left(\lambda\right)}.
\end{equation}
The following is a useful corollary of Theorem \ref{thm: bounded Bruhat decomp}.

\begin{cor}
	\label{cor: weyl elem norm bound} 
	For any $\RR$-highest weight representation $\varrho: G\rightarrow\GL(V)$ with highest weight $\chi$ and a choice of norm $\norm{\cdot}$ on $V$, there exists $c>0$ which satisfies the following. 
	For any $h\in G$ there exists $w\in W(\Phi_{\RR})$, which only depends on $h$ (and not on $\varrho$), such that for any $v\in V_{\chi}$
	\begin{equation*}
		\norm{\varrho\left(h\right)v}\leq c\cdot\norm{\varphi_{w\left(\chi\right)}\left(\varrho\left(h\right)v\right)}.
	\end{equation*}
\end{cor}
In other words, the size of a vector $\varrho\left(h\right)v$ is controlled by its components corresponding to the weights $w\left(\chi\right)$ for $w\in W(\Phi_\RR)$. Corollary \ref{cor: weyl elem norm bound} is a stronger version of \cite[Theorem 3.1]{tamam}, as the element $w$ does not depend on the representation $\varrho$. This is used in the proof of Proposition \ref{prop: cover locally finite and char lin indp}(2) to show the linear independence of the characters.

\begin{proof}[Proof of Corollary \emph{\ref{cor: weyl elem norm bound}} assuming Theorem \emph{\ref{thm: bounded Bruhat decomp}}]
	Let $h\in G$. By Theorem \ref{thm: bounded Bruhat decomp}, there exists a decomposition $h=\bar{w}n\bar{w}_0b$, where $w\in W(\Phi_{\RR})$, $n\in N_0$, $w_0$ is as in Eq. \eqref{eq: w0 defn}, $b\in B$, and $N_0$ is a fixed compact subset of $N$. 
	
	Since $B$ stabilize the $\RR$-highest weight vector space, $\varrho(b)v\in V_{\chi}$. By Eq. \eqref{eq: weyl act weight}, we may deduce that $\varrho(\bar{w}_0b)v=u\in V_{w_0(\chi)}$. Since $n\in N$, we have $\varrho(n)u-u\in\bigoplus_{\lambda> w_0(\chi)}V_{\lambda}$.
	Therefore, 
	\begin{align}\label{eq: proj is u}
		\varphi_{ww_0(\chi)}(\varrho(h)v)&=\varphi_{ww_0(\chi)}(\varrho(\overline{w}n)u)\varphi_{w_0(\chi)}(\varrho(n)u)=u
	\end{align}
	Since $N_0$ is compact,  
	\begin{equation}\label{eq: norm closeto u}
		\norm{\varrho(h)v}\leq c\norm{u},
	\end{equation}
	for some constant depending only on the choice of the norm on $V$. 
	The claim now follows from \eqref{eq: proj is u} and \eqref{eq: norm closeto u}. 
\end{proof}

The rest of this section is devoted to proving Theorem \ref{thm: bounded Bruhat decomp}. We will use the following real version of Bruhat decomposition, see \cite[\S 14.15]{borel}. 
\begin{thm}[Bruhat decomposition]\label{thm: bruhat}
For every semisimple real group $G$ we have
$$G = \biguplus_{w\in W\left(\Phi_{\RR}\right)}N\bar{w}B.$$
\end{thm}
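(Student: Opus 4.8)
The plan is to derive the statement from the fact that $(G,B,N_G(S),\{s_\alpha:\alpha\in\Delta_{\RR}\})$ is a Tits system ($BN$-pair), for which the Bruhat decomposition --- both the covering $G=\bigcup_{w}N\bar w B$ and its disjointness --- is a formal consequence of the standard formalism. Here $S\subseteq T$ is a maximal $\RR$-split subtorus, so $\Phi_{\RR}$ is the set of nonzero $S$-weights on $\fg$, $Z:=Z_G(S)$ is reductive and is the Levi factor of the minimal $\RR$-parabolic $B=N_G(N)=Z\ltimes N$, and $W(\Phi_{\RR})=N_G(S)/Z$. I would first record the structural axioms, all part of the Borel--Tits theory of parabolic subgroups (and valid even when $\Phi_{\RR}$ is non-reduced): that $B$ and $N_G(S)$ generate $G$ --- for instance because $\langle B,\bar w_0\rangle$ contains the two opposite minimal parabolics $B$ and $\bar w_0B\bar w_0^{-1}$, which generate $G$ --- and that $N_G(S)\cap B=Z$, which is normal in $N_G(S)$ with quotient $W(\Phi_{\RR})$. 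The last point has a clean proof: writing $g=zn\in N_G(S)$ with $z\in Z$, $n\in N$ gives $n\in N\cap N_G(S)$, so $n$ normalizes $S$; if $n\neq e$, then $\Ad(n)|_{\operatorname{Lie}(S)}$ --- a unipotent operator, since $\Ad(n)$ is --- equals the image of $n$ under the injection $N_G(S)/Z\hookrightarrow\GL(\operatorname{Lie}(S))$ into a finite group, so it has finite order, hence is trivial, whence $n$ centralizes $S$ and $n\in Z\cap N=\{e\}$, a contradiction. Together with the non-degeneracy $\bar s_\alpha\notin B$ (immediate, since $\bar s_\alpha$ represents $s_\alpha\neq e$ while $N_G(S)\cap B=Z$) and the standard fact that the $s_\alpha$ generate $W(\Phi_{\RR})$, everything reduces to the exchange condition.

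The core of the proof is the exchange condition: for all $\alpha\in\Delta_{\RR}$ and $w\in W(\Phi_{\RR})$,
\[
\bar s_\alpha\,N\,\bar w\,B \;\subseteq\; N\bar w B \;\cup\; N\,\overline{s_\alpha w}\,B .
\]
I would prove it by a rank-one reduction. Put $U_{(\alpha)}:=\exp(\fg_{\alpha}\oplus\fg_{2\alpha})$ (so $U_{(\alpha)}=\exp(\fg_\alpha)$ when $2\alpha\notin\Phi_{\RR}$), $U_{(-\alpha)}:=\bar s_\alpha U_{(\alpha)}\bar s_\alpha^{-1}$, and $N_\alpha:=\prod_{\lambda\in\Phi_{\RR}^+\smallsetminus\{\alpha,2\alpha\}}\exp(\fg_\lambda)$; since $s_\alpha$ permutes $\Phi_{\RR}^+\smallsetminus\{\alpha,2\alpha\}$, $\bar s_\alpha$ normalizes $N_\alpha$, while $N=N_\alpha U_{(\alpha)}$ as a set. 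Hence $\bar s_\alpha N\bar w B=N_\alpha(\bar s_\alpha U_{(\alpha)}\bar w B)$, and it suffices to show $\bar s_\alpha U_{(\alpha)}\bar w B\subseteq N\bar w B\cup N\,\overline{s_\alpha w}\,B$. For the identity of $U_{(\alpha)}$ this is clear, as $\bar s_\alpha\bar w B\subseteq N\,\overline{s_\alpha w}\,B$. For $u\in U_{(\alpha)}\smallsetminus\{e\}$ one uses the $\mathrm{SL}_2$-type identity in the rank-one subgroup $\langle U_{(\alpha)},U_{(-\alpha)},Z\rangle$, namely $\bar s_\alpha u\in U_{(\alpha)}\,Z\,U_{(-\alpha)}$; conjugating the $U_{(\pm\alpha)}$-factors across $\bar w$ --- using that $\bar w^{-1}U_{(\alpha)}\bar w\subseteq N$ exactly when $w^{-1}\alpha>0$, and $\bar w^{-1}U_{(-\alpha)}\bar w\subseteq N$ exactly when $w^{-1}\alpha<0$ --- places $\bar s_\alpha u\bar w B$ in $N\,\overline{s_\alpha w}\,B$ in the first case and in $N\bar w B$ in the second.

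Granting the exchange condition, since $B$ and the $\bar s_\alpha$ generate $G$ and $B\bar s_\alpha B=N\bar s_\alpha B$, writing an arbitrary $g\in G$ as a word in $B$ and the $\bar s_\alpha$ and applying the exchange condition repeatedly shows $g\in\bigcup_w N\bar w B$, which is the covering. For disjointness it suffices, using $N\bar w B=B\bar w B$ (valid because $B=Z\ltimes N$ and $\bar w$ normalizes $Z$), to show $B\bar w B=B\bar w'B\Rightarrow w=w'$; this follows from the exchange condition and $\bar s_\alpha\notin B$ by the standard Coxeter-combinatorial argument valid in any Tits system. Alternatively --- and closer to the representation-theoretic tools of this paper --- one can argue with highest-weight vectors: if $\varrho:G\to\GL(V)$ is an $\RR$-highest weight representation with highest weight $\chi$ and highest weight vector $v_\chi$, then any $g=n\bar w b\in N\bar w B$ satisfies $\varrho(g)v_\chi\in\bigoplus_{\mu\ge w(\chi)}V_\mu$ with nonzero $V_{w(\chi)}$-component, so $w(\chi)$ is the $\le$-least weight occurring in $\varrho(g)v_\chi$ and hence is determined by $g$; choosing $\varrho$ with $W(\Phi_{\RR})$-regular highest weight (which exists after replacing $\chi$ by a sufficiently divisible multiple) forces $w(\chi)=w'(\chi)$, hence $w=w'$. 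Combining the covering and disjointness with the rewriting $B\bar w B=N\bar w B$ yields $G=\biguplus_w N\bar w B$. The main obstacle is the exchange computation: the bookkeeping with $\bar w$-conjugates of root groups is routine, but a possibly non-reduced $\Phi_{\RR}$ forces one to run it with the two-step unipotent groups $U_{(\alpha)}$ and to establish the $\mathrm{SL}_2$-type identity inside rank-one groups that may be of relative type $\mathrm{BC}_1$ rather than $\mathrm{A}_1$, instead of with one-parameter root subgroups.
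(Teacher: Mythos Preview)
The paper does not supply a proof of this theorem; it is quoted from \cite[\S 14.15]{borel}, where it is established via the Tits system (BN-pair) axioms --- exactly the route you take. Your verification of the axioms (generation, $N_G(S)\cap B=Z$, $\bar s_\alpha\notin B$) is correct, and the representation-theoretic alternative for disjointness is a pleasant connection to the tools used elsewhere in the paper.

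There is one slip in your exchange-condition case analysis. From $\bar s_\alpha u = u_+ z u_-$ with $u_\pm\in U_{(\pm\alpha)}$ you propose to ``conjugate the $U_{(\pm\alpha)}$-factors across $\bar w$'' to handle both sign cases. This works when $w^{-1}\alpha<0$: then $\bar w^{-1}u_-\bar w\in N$ and one lands in $N\bar w B$. But when $w^{-1}\alpha>0$ the factor $u_-$ cannot be absorbed (since $\bar w^{-1}U_{(-\alpha)}\bar w\not\subseteq N$), and $u_+$ sits on the wrong side of $z u_-\bar w$ to be pushed through; so the factorization $u_+ z u_-$ does not produce an $\bar s_\alpha$ on its own, and your stated conclusion $N\,\overline{s_\alpha w}\,B$ does not follow from it. The repair is simpler than the $\mathrm{SL}_2$-identity: when $w^{-1}\alpha>0$ one has $\bar w^{-1}u\bar w\in N\subseteq B$ directly, hence $\bar s_\alpha u\bar w B=\bar s_\alpha\bar w B\subseteq N\,\overline{s_\alpha w}\,B$, and the $\mathrm{SL}_2$-identity is needed only in the other case. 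A small side remark on your disjointness argument: ``replacing $\chi$ by a sufficiently divisible multiple'' does not make $\chi$ regular; what you want is to choose $\chi$ strictly dominant from the outset (e.g., a suitable multiple of the sum of the fundamental $\RR$-weights).
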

We will enlarge each cell in the decomposition to get an open cover 
\begin{align}\label{eq: cover bruhat}
G=\bigcup_{w\in W(\Phi_{\RR})}\bar{w}N\bar{w}_0B.
\end{align}
Then we will shrink each open set of the cover \eqref{eq: cover bruhat} to obtain the desired closed cover.

\begin{proof}[Proof of Theorem \emph{\ref{thm: bounded Bruhat decomp}}]
Consider the Lie group $N_w := \bar{w}^{-1}N\bar{w}$. Its Lie algebra is $\fn_w := \bigoplus_{\lambda\in \Phi_{\RR}^+}\fg_{w(\lambda)}$. 
It follows that $\fn_w = (\fn_w\cap \fn)\oplus (\fn_{w}\cap \fn_{w_0})$, that is, the Lie algebras of $N_w\cap N, N_{w}\cap N_{w_0}$ span the Lie algebra of $N_w$.
It follows from \cite[\S 14.4]{borel} that \[N_w = (N_w\cap N_{w_0}) \cdot (N_w\cap N). \]
Hence, using Bruhat decomposition, 
\begin{align*}
	G=&\biguplus_{w\in W\left(\Phi_{\RR}\right)}\bar{w}N_wB = 
	\biguplus_{w\in W\left(\Phi_{\RR}\right)}\bar{w}(N_w\cap N_{w_0}) \cdot (N_w\cap N)B
	\\&\subset 
	\bigcup_{w\in W\left(\Phi_{\RR}\right)}\bar{w}N_{w_0} \cdot NB
	=
	\bigcup_{w\in W\left(\Phi_{\RR}\right)}\bar{w}\bar w_0^{-1} N \bar w_0 \cdot B.
\end{align*}
Therefore,  
\begin{align}\label{eq: semi bounded bruhat}
	\bigcup_{w\in W\left(\Phi_{\RR}\right)} \bar w N\bar w_0B = G.
\end{align}

Let $p_0\in G/B$ denote the trivial coset.
Then, \eqref{eq: semi bounded bruhat} is equivalent to 
\begin{align}\label{eq: cover of compact}
	\bigcup_{w\in W\left(\Phi_{\RR}\right)}\bar{w}N \bar w_0 p_0 = G/B.
\end{align}
The orbit $N \bar w_0 p_0$ must have a nontrivial interior as it is constructible and the union of its translates covers $G/B$. 
Since it is an orbit, it is open. Thus, \eqref{eq: cover of compact} is an open cover.
	
Since \eqref{eq: cover of compact} is an open cover, there exists a closed sub-cover $\{V_ww\in W(\Phi_\RR)\}$ for $V_w\subset \bar{w}N \bar w_0 p_0$. 
Since $G/B$ is compact, the set $V_w$ must be compact. 
Since $N_{w_0}\cap B = \{e\}$ (see \cite[\S 14.1]{borel}), the map $N\to N\bar w_0 p_0$ is a homeomorphism.
Hence, there are compact subsets $N_{0,w}\subset N$ such that $V_w = \bar{w}N_{0,w} \bar w_0 p_0$. It remains to set $N_0 := \bigcup_{w\in W\left(\Phi_{\RR}\right)}N_{0,w}$.
\end{proof}

\section{\label{sec: compactness criterion} A compactness criterion}
In this section, we recall a compactness criterion obtained by Tomanov and Weiss in \cite{TW} and further developed by Kleinbock and Weiss in \cite{KleinbockWeiss}. 

According to \cite[Theorem 3.4]{borel}, the Lie algebra of $\textbf{G}$ is equipped with a $\QQ$-structure which is compatible with the $\QQ$-structure of $\textbf{G}$. Let $\fg:=\operatorname{Lie}(\textbf{G})(\RR)$ and $\fg_\ZZ:=\operatorname{Lie}(\textbf{G})(\ZZ)$.

Let $P_1,\dots, P_r$ be the maximal  $\QQ$-parabolic subgroups containing a fixed minimal $\QQ$-parabolic subgroup, and let $\fu_1,\dots, \fu_r$ denote the Lie algebras of their unipotent radicals. Then, $r =\operatorname{rank}_\QQ(\bG)$ (see \cite{borel}).
For $j = 1,\dots, r$,  
let $\calr_j$ denote the set of all the Lie algebras of unipotent radicals of conjugates of $P_j$ defined over $\QQ$. Set  $\calr:=\bigcup_{j}\calr_j$.

\begin{de}
	Given a neighborhood $W$
	 of $0$ in $\fg$, and 
	 $g \in G$, an element $\fu\in\calr$ is called 
	 \emph{$W$-active for $g$} if 
	 $\Ad(g)\fu\subset\spa(W\cap\Ad(g)\fg_\ZZ)$.
\end{de}

\begin{prop}[{Compactness criterion \cite[Proposition 3.5]{TW}}] \label{prop: compactness criterion}
	For any $L\subset G,$ $\pi(L)\subset G/\Gamma$ is unbounded if and only if for any neighborhood $W$ of $0$ in $\fg$ there exist $g\in L$ and $\fu \in\calr$ which is $W$-active for $g$.
\end{prop}

In other words, given a sequence $(g_i)_{i=0}^\infty\subset G$, the sequence $(\pi(g_i))_{i=0}^\infty$ diverges if and only if for every neighborhood $W$ of $0$ in $\fg$ there is $i_0\ge0$ such that for every $i\ge i_0$ there is $\fu \in\calr$ which is $W$-active for $g_i$.

A linear subspace of $\fg$ is called \emph{unipotent} if it is contained in the Lie algebra of a unipotent subgroup. Note that a sub-Lie algebra of $\fg$ is unipotent if and only if it is the Lie algebra of some unipotent subgroup.  

\begin{prop}\label{prop: W defn}{\cite[Proposition 3.3]{TW}}
	There is a neighborhood $W_0$ of $0$ in $\fg$ such that for any $g\in G,$ the span of $\Ad(g)\fg_\ZZ\cap W_0$ is unipotent.
\end{prop}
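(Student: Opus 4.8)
The plan is to reduce the statement to a finite, compact problem using the fact that $G$ acts on $\fg$ via $\Ad$ and that $\fg_\ZZ$ is a lattice, then invoke properness of the arithmetic action together with a local-finiteness/uniformity argument over $G$. First I would fix an inner product on $\fg$ and a small closed ball $W_0 = \{v \in \fg : \norm{v} \le \delta\}$; the claim is that for a suitable $\delta>0$, for every $g \in G$ the span $V_g := \spa(\Ad(g)\fg_\ZZ \cap W_0)$ is a unipotent subspace. The key observation is that $\Ad(g)\fg_\ZZ$ is a unimodular lattice in $\fg$ (since $\Ad(g)$ has determinant $\pm 1$ on the semisimple Lie algebra $\fg$), so by Minkowski's theorem $\Ad(g)\fg_\ZZ \cap W_0 \ne \{0\}$ only when $g$ is "far out" in $G/\Gamma$; in general the short vectors of $\Ad(g)\fg_\ZZ$ span a subspace whose structure is governed by the Mahler/reduction theory of the lattice.

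The main step is to show that the span of the short vectors is always a Lie subalgebra consisting of $\ad$-nilpotent elements. For this I would argue as follows. Suppose $x_1,\dots,x_m \in \Ad(g)\fg_\ZZ \cap W_0$. Each $x_i = \Ad(g)y_i$ with $y_i \in \fg_\ZZ$. Consider the subgroup $H_i$ of $\Ad(G)$ generated by $\exp(t\,\ad x_i)$. Because the $x_i$ are very short, the bracket $[x_i,x_j]$ is even shorter, hence (shrinking $\delta$) also lies in $\Ad(g)\fg_\ZZ \cap W_0'$ for a slightly larger $W_0'$ — but being shorter than the shortest \emph{nonzero} vector of a unimodular lattice forces a dichotomy. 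More precisely, I would use Proposition \ref{prop:compactness criterion}'s underlying mechanism in reverse: if $\{x_1,\dots,x_m\}$ spans a subspace that is \emph{not} unipotent, then it contains an element with a nonzero semisimple part, whose eigenvalues are bounded away from $1$ in absolute value along some root direction; iterating brackets or powers of the corresponding one-parameter subgroup would then produce arbitrarily long vectors inside the $\ZZ$-span of $\{x_i\}$, which is contained in $\Ad(g)\fg_\ZZ$, contradicting that this is a lattice (a discrete subgroup cannot contain a bounded-away-from-zero direction along which a torus acts with nontrivial weights and still be discrete — the orbit of a lattice point under a diagonalizable subgroup is either bounded, forcing semisimple eigenvalue $1$, or unbounded but then escapes any ball while staying in the discrete set, which is fine, but combined with the fact that the \emph{short} vectors cannot generate such a subgroup discretely). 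The cleanest route is: pass to the $\RR$-Zariski closure of the subgroup generated by the short unipotent-candidate directions; by a theorem on algebraic groups generated by unipotents (Borel–Tits / the fact that the group generated by a family of $\ad$-nilpotent one-parameter subgroups is itself an algebraic group whose unipotent radical contains them), if all the $x_i$ were $\ad$-nilpotent the span closes up into a nilpotent Lie algebra; and if some $x_i$ is not $\ad$-nilpotent its semisimple part $s_i$ (also a limit of integer combinations, up to bounded error) is a nonzero short semisimple element, but nonzero semisimple elements of $\Ad(g)\fg_\ZZ$ have norm bounded below uniformly in $g$ by an argument using that the set of semisimple elements of $\fg_\ZZ$ of bounded norm is finite and $\Ad(g)$ acts — this last point needs care and is where I expect the real work to be.

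Consequently the hard part will be the uniformity in $g \in G$: a priori $\Ad(g)$ can distort norms wildly, so "short semisimple vectors don't exist" must be proved uniformly over the noncompact group $G$. I would handle this by a Mahler-compactness / reduction-theory argument: the function $g \mapsto (\text{shortest nonzero semisimple vector of }\Ad(g)\fg_\ZZ)$ descends to $G/\Gamma$, and one shows it is bounded below by noting that a sequence $g_n$ along which it tends to $0$ would, after passing to $G/\Gamma$, either stay in a compact set (impossible, as the lattice $\fg_\ZZ$ has a fixed shortest semisimple vector up to the finite $\Gamma$-orbit structure) or diverge in $G/\Gamma$ — but divergence in $G/\Gamma$ is controlled by Proposition \ref{prop:compactness criterion}, which says the \emph{unipotent} radicals in $\calr$ are what become active, i.e. the short vectors that appear are precisely $\ad$-nilpotent, not semisimple. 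Thus the dichotomy is forced and the proposition follows with $W_0$ the $\delta$-ball for this uniform $\delta$. Alternatively, since this is quoted as \cite[Proposition 3.3]{TW}, I would simply cite Tomanov–Weiss for the uniform bound and present the Lie-theoretic closure argument above as the reason the span is unipotent; the paper does exactly this, treating the statement as recalled rather than reproved.
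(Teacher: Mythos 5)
The paper does not reprove this proposition; as you correctly observe at the end, it is simply recalled from Tomanov--Weiss. Since you nevertheless sketch an argument, let me flag the gaps in that sketch.

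First, the argument is circular exactly where it matters most. You propose to rule out short semisimple vectors by invoking Proposition~\ref{prop:compactness criterion}, saying that divergence in $G/\Gamma$ is ``controlled'' by that criterion so the active short vectors must be $\ad$-nilpotent. But in \cite{TW} the compactness criterion (their Proposition~3.5) is proved \emph{after}, and \emph{using}, their Proposition~3.3 --- which is precisely the statement you are trying to prove. So this step begs the question.

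Second, and independently of the circularity, even if you establish that each individual element of $\Ad(g)\fg_\ZZ\cap W_0$ is nilpotent (and this part can indeed be made rigorous by a cleaner route than the one you sketch: take the nonconstant coefficients $c_i$ of the characteristic polynomial of $\ad x$; these are $\Ad$-invariant polynomials with rational coefficients, hence take bounded-denominator rational values on $\fg_\ZZ$ and are constant along $\Ad(g)$-orbits, and they all vanish at $x$ iff $x$ is nilpotent, giving a uniform positive lower bound $\norm{\Ad(g)y}\ge\delta$ for all non-nilpotent $y\in\fg_\ZZ$ and all $g$), this does \emph{not} yield the conclusion. A subspace spanned by nilpotent elements need not be a unipotent subspace: $\mathfrak{sl}_2$ is spanned by nilpotent matrices yet is not contained in any nilpotent Lie subalgebra. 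The auxiliary claim you lean on --- that the algebraic group generated by a family of $\ad$-nilpotent one-parameter subgroups has all of them inside its unipotent radical --- is false for the same reason: the upper and lower unipotent one-parameter subgroups of $\SL_2$ generate all of $\SL_2$, whose unipotent radical is trivial. The genuine content of \cite[Proposition~3.3]{TW}, which your sketch does not reach, is a Zassenhaus/Margulis-lemma-type statement: one must exploit that iterated brackets of the short generators remain in the discrete set $\Ad(g)\fg_\ZZ$ and control their norms, in order to show the span closes up into the Lie algebra of a unipotent group. So the ``Lie-theoretic closure argument'' as stated cannot substitute for citing \cite{TW}.
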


\begin{prop}\label{prop: at most r intersections}{\cite[Proposition 3.5]{KleinbockWeiss}}
	Suppose that for some $j\in \{1,\dots,r\}$ and $\fu,\fu'\in\calr_j$, the subspace $\operatorname{span}(\fu,\fu')$ is unipotent. Then $\fu = \fu'$. In particular, for any unipotent subspace $\fv\subset\fg$, \[\#\{\fu\in\calr\::\:\fu\subset\fv\}\le r.\]
\end{prop}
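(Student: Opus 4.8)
The plan is to prove the first assertion — that $\fu,\fu'\in\calr_j$ with $\operatorname{span}(\fu,\fu')$ unipotent forces $\fu=\fu'$ — and then deduce the counting statement as an easy consequence. For the main assertion I would argue group-theoretically rather than purely at the Lie-algebra level. Write $\fu=\operatorname{Lie}(R_u(P))$ and $\fu'=\operatorname{Lie}(R_u(P'))$ where $P,P'$ are $\QQ$-conjugates of the fixed maximal $\QQ$-parabolic $P_j$, and let $U=R_u(P)$, $U'=R_u(P')$ be the corresponding unipotent radicals. By hypothesis there is a unipotent subgroup $V\subseteq\bG$ whose Lie algebra contains $\operatorname{span}(\fu,\fu')$; enlarging $V$ we may take $V=R_u(Q)$ for a parabolic $\QQ$-subgroup $Q$ (the Zariski closure of a unipotent group sits inside the unipotent radical of a parabolic, by a standard result on unipotent subgroups of reductive groups — e.g. \cite{borel}). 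Thus $U,U'\subseteq R_u(Q)$, and in particular $U\subseteq Q$, $U'\subseteq Q$.

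The next step is to exploit that $P$ and $P'$ are \emph{conjugate maximal} parabolics, hence each corresponds (over $\QQ$) to omitting a single simple root from the minimal $\QQ$-parabolic's simple system, after conjugating. Concretely, fix a maximal $\QQ$-split torus $S$ and minimal $\QQ$-parabolic $P_0\supseteq S$; up to $\bG(\QQ)$-conjugacy every element of $\calr_j$ is $\operatorname{Lie}(R_u(P))$ for a parabolic $P$ containing some conjugate of $S$, and the conjugacy class is a single $\bG(\QQ)$-orbit of maximal parabolics. I would use the standard description of containments among parabolics: if $U=R_u(P)\subseteq Q$ with $Q$ parabolic, then $P\supseteq Q$ or, more precisely, $P$ and $Q$ are related so that $R_u(Q)\subseteq R_u(P)$ unless $P\subseteq Q$. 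The key point to extract is: since $\fu\subseteq\operatorname{Lie}(R_u(Q))$ and $\fu'\subseteq\operatorname{Lie}(R_u(Q))$, both $P$ and $P'$ contain $Q$; but $P,P'$ are \emph{maximal} proper parabolics, so $P=Q=P'$ unless $Q=\bG$, which is impossible since $Q$ is proper (it has nontrivial unipotent radical containing the nontrivial $\fu$). Hence $P=P'$ and therefore $\fu=\fu'$. The one subtlety is making sure "$R_u(P)\subseteq R_u(Q)\implies Q\subseteq P$" is applied correctly: the clean statement is that for parabolics $P\subseteq Q$ one has $R_u(Q)\subseteq R_u(P)$, so from $R_u(P)\subseteq R_u(Q)$ together with maximality one reads off $P=Q$; I would phrase the argument to invoke precisely this, perhaps by first replacing $Q$ by the smallest parabolic with $R_u(Q)\supseteq\fu$, which is $P$ itself, giving $\operatorname{Lie}(R_u(P))=\fu\subseteq\operatorname{Lie}(R_u(Q))\subseteq\operatorname{Lie}(R_u(P))$ and symmetrically for $P'$.

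For the final counting claim, suppose $\fv\subseteq\fg$ is unipotent and $\fu^{(1)},\dots,\fu^{(N)}\in\calr$ are distinct with each $\fu^{(i)}\subseteq\fv$. For any two of them lying in the same $\calr_j$, the span is contained in $\fv$ hence unipotent, so by the first part they coincide — contradiction with distinctness. Therefore the $\fu^{(i)}$ lie in pairwise distinct classes $\calr_{j}$, and since there are exactly $r$ such classes we get $N\le r$.

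The step I expect to be the main obstacle is the second one: carefully justifying, from the containment of unipotent radicals of two conjugate maximal $\QQ$-parabolics inside a common unipotent group, that the two parabolics must actually coincide. This requires the structure theory of parabolic subgroups (the correspondence with subsets of simple roots, the anti-monotonicity $P\subseteq Q\Rightarrow R_u(Q)\subseteq R_u(P)$, and the fact that a unipotent subgroup is contained in the unipotent radical of some parabolic), and one must be slightly careful that all of this is done over $\QQ$, using the relative ($\QQ$-)root system, since $\calr_j$ is defined via $\QQ$-parabolics. Everything else is formal.
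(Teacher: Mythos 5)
The paper gives no proof of this proposition — it is quoted directly from Kleinbock--Weiss — so there is no internal argument to compare against; I am reviewing the proposal on its own merits.

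Your overall strategy is the right one: reduce the Lie-algebra hypothesis to a containment of unipotent radicals inside the radical of a single parabolic $Q$, then argue that two conjugate maximal $\QQ$-parabolics with unipotent radicals inside $R_u(Q)$ must coincide. The deduction of the counting statement from the first assertion is also correct. But the step you yourself flag as the potential obstacle is indeed where the argument breaks down, and the specific lemmas you invoke are wrong as stated.

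First, the assertion ``if $R_u(P)\subseteq Q$ then $P\supseteq Q$'' is false: for any $Q\supseteq P$ one has $R_u(P)\subseteq P\subseteq Q$, yet $P\not\supseteq Q$ unless $P=Q$. What is true — and what you presumably want — is the stronger hypothesis $R_u(P)\subseteq R_u(Q)$, which does imply $Q\subseteq P$, \emph{provided $P$ and $Q$ contain a common maximal $\QQ$-split torus}. That last proviso is a genuine input you never invoke: it is the Borel--Tits theorem that any two parabolic $\QQ$-subgroups contain a common maximal $\QQ$-split torus, and without it the translation between radical-containment and group-containment via root-system combinatorics is unavailable. Second, your conclusion ``both $P$ and $P'$ contain $Q$; but $P,P'$ are maximal, so $P=Q=P'$'' is a non sequitur: a maximal parabolic containing $Q$ need not equal $Q$ (think of a Borel inside a maximal parabolic in $\SL_3$). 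What actually closes the argument is choosing $Q$ to be a \emph{minimal} $\QQ$-parabolic — which you may, since any unipotent $\QQ$-subgroup lies in the radical of a minimal $\QQ$-parabolic — and then using that among the finitely many $\QQ$-parabolics containing a fixed minimal one there is exactly one in each $\bG(\QQ)$-conjugacy class of maximals, so $Q\subseteq P$, $Q\subseteq P'$, and $P,P'$ of type $j$ forces $P=P'$. Third, the parenthetical ``replacing $Q$ by the smallest parabolic with $R_u(Q)\supseteq\fu$, which is $P$ itself'' is incorrect: the parabolics $Q$ with $R_u(Q)\supseteq R_u(P)$ are exactly those contained in $P$, so the smallest ones are minimal parabolics inside $P$, not $P$.

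To summarize the fix: take $Q=P_0$ minimal with $\operatorname{span}(\fu,\fu')\subseteq\operatorname{Lie}(R_u(P_0))$; use Borel--Tits to place $P$ and $P_0$ over a common maximal $\QQ$-split torus $S$; read off from the $\QQ$-root combinatorics that $R_u(P)\subseteq R_u(P_0)$ forces $P_0\subseteq P$; do the same for $P'$; then invoke uniqueness of the type-$j$ maximal parabolic containing $P_0$ to get $P=P'$. With these corrections the argument is complete.
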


\begin{cor}\label{cor: one conjugating elemnt for intersections}
	Suppose that for some $1\le j_1<\dots<j_m\le r$
	and $\fv_1\in\calr_{j_1},\dots,\fv_m\in\calr_{j_m}$, the subspace  $\operatorname{span}(\fv_1,\dots,\fv_m)$ is unipotent. 
	Then, there exists $h\in G$ such that $\fv_i=\Ad(h)\fu_{j_i}$ for all $1\le i\le m$.
\end{cor}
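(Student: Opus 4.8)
\textbf{Proof proposal for Corollary \ref{cor: one conjugating elemnt for intersections}.}

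The plan is to reduce the statement to the situation where the ambient group is the unipotent radical of a single $\QQ$-parabolic, and then appeal to the transitivity of $\bG(\QQ)$ on suitable flags. First I would observe that since $\operatorname{span}(\fv_1,\dots,\fv_m)$ is unipotent, by Proposition \ref{prop: W defn}-type considerations (or directly, since a unipotent subspace is contained in the Lie algebra of a maximal unipotent subgroup, hence in the unipotent radical of a minimal $\QQ$-parabolic after conjugation) there is $h_0\in\bG(\QQ)$ so that $\operatorname{Ad}(h_0^{-1})\operatorname{span}(\fv_1,\dots,\fv_m)$ lies in $\fu$, the Lie algebra of the unipotent radical $\bN$ of the fixed minimal $\QQ$-parabolic $\bP_0 = \bigcap_j P_j$. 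Here I use that each $\fv_i$, being the Lie algebra of the unipotent radical of a $\QQ$-parabolic, consists of $\QQ$-points, so the span is a $\QQ$-subspace and the conjugating element can be taken rational; the existence of $h_0$ is the standard fact that any unipotent $\QQ$-subgroup lies in the unipotent radical of some minimal $\QQ$-parabolic, all of which are $\bG(\QQ)$-conjugate.

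Having replaced $\fv_i$ by $\fv_i' := \operatorname{Ad}(h_0^{-1})\fv_i \subset \fu$, I now have for each $i$ an element $\fv_i'\in\calr_{j_i}$ with $\fv_i'\subset\fu$. The next step is to identify $\fv_i'$ with $\fu_{j_i}$ itself. By Proposition \ref{prop: at most r intersections} applied with $\fv=\fu$, we know $\#\{\fu\in\calr : \fu\subset\fu\}\le r$; but $\fu_1,\dots,\fu_r$ are $r$ distinct elements of $\calr$ all contained in $\fu$ (each $\fu_k\subset\fu$ since $P_k\supseteq \bP_0$ so the unipotent radical of $\bP_0$ contains that of $P_k$), so in fact $\{\fu\in\calr : \fu\subset\fu\} = \{\fu_1,\dots,\fu_r\}$ exactly. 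Since $\fv_i'\in\calr_{j_i}\subset\calr$ and $\fv_i'\subset\fu$, it must be one of $\fu_1,\dots,\fu_r$; and since $\calr_1,\dots,\calr_r$ are disjoint (a conjugate of $P_j$ is $\bG(\QQ)$-conjugate to exactly one $P_k$, as the $P_k$ lie in distinct conjugacy classes), the only candidate in $\calr_{j_i}$ is $\fu_{j_i}$. Hence $\fv_i' = \fu_{j_i}$, i.e. $\operatorname{Ad}(h_0^{-1})\fv_i = \fu_{j_i}$, and setting $h := h_0$ gives $\fv_i = \operatorname{Ad}(h)\fu_{j_i}$ for all $i$ simultaneously, as required.

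The main obstacle I anticipate is justifying cleanly that the single conjugating element $h_0$ can be chosen in $\bG(\RR)$ (the statement only asks for $h\in G$) while carrying along all $m$ subspaces at once — the point being that once $\operatorname{span}(\fv_1,\dots,\fv_m)$ is moved inside $\fu$, \emph{each} $\fv_i'$ is automatically forced to be the correct standard piece $\fu_{j_i}$ by the counting argument, so no further per-index adjustment (which could spoil the others) is needed. A secondary technical point worth spelling out is the disjointness of the families $\calr_j$: two standard maximal $\QQ$-parabolics $P_j\ne P_k$ are never $\bG(\QQ)$-conjugate (they have non-conjugate Levi types, reflected in omitting different simple $\QQ$-roots), which is what makes the assignment $i\mapsto j_i$ well defined and matches $\fv_i'$ to $\fu_{j_i}$ rather than to some other $\fu_k$. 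Both facts are standard in the Borel--Tits theory of parabolic subgroups and can be cited from \cite{borel}.
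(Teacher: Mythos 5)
Your proof is correct and takes essentially the same route as the paper: conjugate (by a rational element) so that $\spa(\fv_1,\dots,\fv_m)$ lies inside the Lie algebra of the unipotent radical of the fixed minimal $\QQ$-parabolic, then use Proposition~\ref{prop: at most r intersections} to identify each conjugated $\fv_i$ with the standard $\fu_{j_i}$. The only cosmetic difference is that you invoke the counting consequence $\#\{\fu\in\calr:\fu\subset\fn\}\le r$ together with disjointness of the $\calr_j$, whereas the paper applies the first part of the same proposition directly to the pair of elements of $\calr_{j_i}$; you are also somewhat more explicit than the paper that the conjugating element should be chosen in $\bG(\QQ)$, which is indeed needed for the conjugates to remain in $\calr_{j_i}$.
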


\begin{proof}
    Since $\operatorname{span}(\fv_1,\dots,\fv_m)$ is unipotent and defined over $\QQ$, it is contained in a maximal unipotent subgroup defined over $\QQ$, which is the unipotent radical of a minimal $\QQ$-parabolic group, i.e., conjugated to a subspace of $\fn$. Assume $h\in G$ satisfies \[
	\spa(\fv_1,\dots,\fv_m)\subset\Ad(h)\fn. \]
	Then, for each $1\le i\le m$ the sets $\Ad(h)\fu_{j_i}$, $\fv_i$ are both in $\calr_i$ and   $\operatorname{span}(\Ad(h)\fv_i,\fu_i)$ is unipotent. Hence, Proposition \ref{prop: at most r intersections} implies that $\Ad(h)\fv_i=\fu_i$. 
\end{proof}

For $j = 1,\dots,r$ and $\fu\in\calr_j$, let $\bp_\fu = \bu_1\wedge\cdots\wedge\bu_{d_j} \in \tilde{V}_j:=\bigwedge^{d_j}\fg$, where $\bu_1,\dots,\bu_{d_j} \in\fg_\ZZ$ form a basis for the $\ZZ$-module $\fu\cap\fg_\ZZ$ ($\bp_\fu$ is uniquely determined up to a sign). Let\[\tilde{\varrho_j}:=\bigwedge^{d_j}\Ad: G\rightarrow \GL(\tilde{V}_j).\]
For $j=1,\dots,r$, let $V_j:=\operatorname{span}(\varrho_j(G)\bp_{\fu_j})$, and let $\varrho_j$ be the restriction of $\tilde{\varrho}_j$ to $\GL(V_j)$.
Since all the elements of $\calr_j$ are  conjugates of $\fu_j$, we have \[\left\{\bp_\fu\::\:\fu\in\calr_j\right\}\subset V_j. \]

\begin{observation}\label{obs: fundamental representations}
For each $1\le j\le r$, the space spanned by $\bp_{\fu_j}$ is fixed by a parabolic subgroup of $G$. Hence, the representations $\varrho_1,\dots,\varrho_r$ are $\QQ$-highest weight representations, and in particular, irreducible. % on $V_1,\dots,V_r$, respectively.
Denote the highest weight of $\varrho_i$ by $\chi_i$, $1\le i\le r$, and let $\Delta_\QQ=\{\alpha_1,\dots,\alpha_r\}$ be a $\QQ$-simple system of $G$. Then, according to \cite[Lemma 5.1]{tamam2} for any $1\le i\le r$, we have 
\begin{equation}\label{eq: fundamental weights}
    \langle\chi_i,\alpha_j\rangle=c_i\delta_{ij}
\end{equation}
for some positive constant $c_i$, where the inner product is defined using the Killing form and $\delta_{ij}$ is the Kronecker delta. 
That is, $\varrho_1,\dots,\varrho_r$ are the \emph{$\QQ$-fundamental representations of $G$}. 
In addition, for each $1\le j\le r$, the vector $\bp_{\fu_j}$ is a highest weight vector for $\varrho_j$.

\end{observation}

\begin{prop}[{\cite[Corollary 3.3]{KleinbockWeiss}}]\label{prop: bound on vectors}
	For every $\varepsilon>0$ there exists a neighborhood $W_{\varepsilon}$ of $0$ in $\fg$ such that if $\fu\in\calr_j$, $1\le j\le r$, is $W_{\varepsilon}$-active for $g$, then
	\begin{equation*}\label{eq: small vectors representation}
	\norm{\varrho_j(g)\bp_\fu}<\varepsilon
	\end{equation*}
\end{prop}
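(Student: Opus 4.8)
My plan is a contradiction argument resting on Siegel's reduction theory and Minkowski's second theorem. First reformulate the conclusion. Since $\bG$ is semisimple, $\varrho_j(G)\subseteq\SL(V_j)$, so $\Ad(g)\fg_\ZZ$ is a unimodular lattice in $\fg$; moreover $\bp_\fu$ is, up to sign, a primitive vector of the lattice $V_j\cap\bigwedge^{d_j}\fg_\ZZ$, and — using an inner-product norm on $V_j\subseteq\bigwedge^{d_j}\fg$, which suffices since the statement is norm-independent — the quantity $\norm{\varrho_j(g)\bp_\fu}$ is precisely the covolume $\cov\big(\Ad(g)(\fu\cap\fg_\ZZ)\big)$ of the primitive rank-$d_j$ sublattice $\Ad(g)(\fu\cap\fg_\ZZ)=\Ad(g)\fu\cap\Ad(g)\fg_\ZZ$ of $\Ad(g)\fg_\ZZ$. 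Assume the statement fails: there are $\varepsilon>0$, neighborhoods $W_n$ of $0$ with $W_n\to\{0\}$, elements $g_n\in G$, and $\fu_n\in\calr_{j_n}$ such that $\fu_n$ is $W_n$-active for $g_n$ but $\norm{\varrho_{j_n}(g_n)\bp_{\fu_n}}\ge\varepsilon$; after passing to a subsequence, $j_n\equiv j$. The lattice $\Ad(g)\fg_\ZZ$, the property of being $W$-active, and the number $\norm{\varrho_j(g)\bp_\fu}$ are all invariant under the substitution $(g,\fu)\mapsto(g\gamma,\Ad(\gamma^{-1})\fu)$ with $\gamma\in\Gamma$ (because $\Ad(\gamma)$ preserves $\fg_\ZZ$, $\varrho_j(\gamma)$ preserves the integral structure of $V_j$ up to sign, and $\Ad(\gamma^{-1})\fu\in\calr_j$), so we may assume each $g_n$ lies in a fixed Siegel set: $g_n=\omega_n a_n k_n$ with $\omega_n,k_n$ confined to compact sets and $a_n$ in the closed positive chamber of the real points of the maximal $\QQ$-split torus $S$.

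Next set $\fw_n:=\spa(W_n\cap\Ad(g_n)\fg_\ZZ)$. For $n$ large $W_n\subseteq W_0$, so by Proposition \ref{prop: W defn} $\fw_n$ is a unipotent subspace, and it contains $\Ad(g_n)\fu_n$ by $W_n$-activeness. Since $W_n\to\{0\}$ and $\fu_n\neq 0$, the lattice $\Ad(g_n)\fg_\ZZ$ has nonzero vectors of length tending to $0$; as $\fw_n\cap\Ad(g_n)\fg_\ZZ$ is a primitive sublattice of bounded rank all of whose successive minima tend to $0$, Minkowski's second theorem gives $\cov(\fw_n\cap\Ad(g_n)\fg_\ZZ)\to 0$. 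Now $\Ad(g_n)(\fu_n\cap\fg_\ZZ)$ is a primitive rank-$d_j$ sublattice of this small lattice; a primitive sublattice of a small lattice need not itself be small (it may be ``tilted''), and ruling out such tilted configurations for the specific subspaces $\fu_n\in\calr_j$ is the heart of the matter. For this I would use rigidity: the subspace $\Ad(g_n)^{-1}\fw_n$ is \emph{exactly} $\fg_\ZZ$-rational (it is the $\Ad(g_n)^{-1}$-image of a span of vectors of the lattice $\Ad(g_n)\fg_\ZZ$) and unipotent, so by Borel--Tits it lies in the Lie algebra $\fn(Q_n)$ of the unipotent radical of a $\QQ$-parabolic subgroup $Q_n$; since $\fu_n\in\calr_j$ and $\fu_n\subseteq\Ad(g_n)^{-1}\fw_n\subseteq\fn(Q_n)$, the parabolic $Q_n$ is contained in a $\QQ$-conjugate of $P_j$, and Proposition \ref{prop: at most r intersections} shows $\fu_n$ is one of the at most $r$ elements of $\calr$ contained in the unipotent subspace $\fn(Q_n)$. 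Using the finiteness of $\Gamma$-conjugacy classes of $\QQ$-parabolic subgroups of each type (reduction theory) one arranges — compatibly with the Siegel-set reduction above — that $Q_n$ is one of finitely many standard parabolics $P_I$ with $I\subsetneq\Delta_\QQ$; passing to a further subsequence, $Q_n=P_I$ for a fixed $I$, so $\fu_n$ ranges over a finite set. In particular $\norm{\bp_{\fu_n}}\le C$ for a constant $C$, and $\fu_n\subseteq\fn(P_I)$.

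To finish: being $W_n$-active with $W_n\to\{0\}$ forces $a_n\to\infty$ in its chamber (otherwise $\Ad(g_n)\fg_\ZZ$ would stay bounded away from having arbitrarily short vectors), and along the direction $I$ the element $\Ad(a_n)$ contracts every restricted root space appearing in $\fn(P_I)\supseteq\fu_n$ by a factor tending to $0$; absorbing the bounded distortions coming from $\omega_n,k_n$ into constants, we get
$\norm{\varrho_j(g_n)\bp_{\fu_n}}=\cov\big(\Ad(g_n)(\fu_n\cap\fg_\ZZ)\big)\le C'\cdot\big(\max\text{-contraction of }\Ad(a_n)\text{ on }\fn(P_I)\big)^{d_j}\norm{\bp_{\fu_n}}\xrightarrow{n\to\infty}0$,
contradicting $\norm{\varrho_j(g_n)\bp_{\fu_n}}\ge\varepsilon$ and proving the proposition. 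The step I expect to be the main obstacle is exactly the rigidity/finiteness step: making rigorous the passage from ``$\Ad(g_n)^{-1}\fw_n$ is a $\QQ$-rational unipotent subspace'' to ``$\fu_n$ lies in a finite set'', which forces one to reconcile the \emph{exact} $\QQ$-rationality of $\Ad(g_n)^{-1}\fw_n$ (needed to apply Borel--Tits and Proposition \ref{prop: at most r intersections}) with the non-rational compact part of the Siegel set, and to invoke the finiteness of $\Gamma$-orbits of $\QQ$-parabolics in order to pin $Q_n$ down to one of finitely many standard ones. Granting this, everything else is a routine consequence of Minkowski's theorem together with the structure theory recalled in \S\ref{sec: bounded Bruhat}--\S\ref{sec:compactness criterion}.
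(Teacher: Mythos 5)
The paper does not prove this statement: it is quoted as \cite[Corollary 3.3]{KleinbockWeiss}, and there is no in-text argument to compare against. Your reduction of $\norm{\varrho_j(g)\bp_\fu}$ to the covolume $\cov\left(\Ad(g)(\fu\cap\fg_\ZZ)\right)$, the $\Gamma$-invariance of the data under $(g,\fu)\mapsto(g\gamma,\Ad(\gamma^{-1})\fu)$, the resulting reduction to a Siegel set, and the observation via Proposition \ref{prop: W defn} and Minkowski's second theorem that $\cov(\fw_n\cap\Ad(g_n)\fg_\ZZ)\to 0$ are all correct.

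The gap you honestly flag, however, is genuine, and the patch you sketch does not close it. After the Siegel reduction $g_n=\omega_na_nk_n$, the $\QQ$-rational unipotent subspace $\Ad(g_n)^{-1}\fw_n$, and hence the parabolic $Q_n$, are already determined by the reduced $g_n$; the freedom to conjugate by $\Gamma$ has been spent. The finiteness of $\Gamma$-conjugacy classes of $\QQ$-parabolic subgroups (equivalently, of $\bP(\QQ)\backslash\bG(\QQ)/\Gamma$) is a statement about abstract orbits and does not convert the continuum of $Q_n$ that can occur here, which vary with the irrational compact factors $\omega_n,k_n$, into a finite list of standard $P_I$. Without that finiteness, $\fu_n$ does not range over a finite set, $\norm{\bp_{\fu_n}}$ is uncontrolled, and the final estimate has no base case. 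There is also a secondary problem in the closing step: $a_n\to\infty$ in the closed positive chamber only forces divergence along some proper face, and the inclusion $\fu_n\subseteq\fn(P_I)$ by itself does not ensure that $\Ad(a_n)$ contracts all of $\fn(P_I)$, so the inequality $\cov\le C'\kappa_n^{d_j}\norm{\bp_{\fu_n}}$ is not justified. Since this is a cited result, the sensible move is to read the short argument in \cite{KleinbockWeiss}; I would expect it to avoid Siegel sets and parabolic classification entirely and instead bound $\cov\left(\Ad(g)(\fu\cap\fg_\ZZ)\right)$ directly in terms of the shape of $W$, using the rigidity captured by Proposition \ref{prop: at most r intersections} rather than a limiting subsequence argument.
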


\section{Construction of a cover}
\label{sec: the cover}
In this section, we construct a cover for every trajectory in $G/\Gamma$. The cover will encode the behavior of the trajectory near the cusps of $G/\Gamma$. To understand the cusps we will use the compactness criterion in Proposition \ref{prop: compactness criterion}.

Let $W_0$ be the intersection of the neighborhood of zero provided by Proposition \ref{prop: W defn} and the one provided by \ref{prop: bound on vectors} for $\varepsilon=1$.

let $K_0$ be the closure of the set of points $\pi(g)\in G/\Gamma$ which satisfy that all $\mathfrak{u}\in\mathcal{R}$ are not $W_0$-active
for $g$. According to Proposition \ref{prop: compactness criterion}, it is a bounded (and thus compact) set.
Then, the sets  
\begin{equation}\label{eq: cover defn}
	U_\fu:=\left\{g\in G\::\:\Ad(g)\fu\subset\operatorname{span}\left(\Ad(g)\fg_\ZZ\cap W_0 \right)\right\},\quad \fu\in\calr, 
\end{equation}
form an open cover of \[
G_0:=\left\{g\in G\::\:\pi(g)\notin K_0\right\}.\]
In this section, we describe some of the properties of this cover. 

The next lemma follows directly from Proposition \ref{prop: at most r intersections}.

\begin{lem}\label{lem: no large intersection} 
	Let $\fv_{1},\dots,\fv_{m}\in\calr$ be distinct Lie algebras such that the intersection $\bigcap_{i=1}^{m} U_{\fv_i}$ is not empty. Then there exist distinct indices $1\le i_1,...,i_m \le r$ such that $\fv_k\in\calr_{i_k}$ for every $1\le k\le m$. In particular, $m\le r$. 
\end{lem}

We are especially interested in restricting the covering in \eqref{eq: cover defn} to an orbit $A\pi(g)$ for some fixed $g\in G$.
% (outside $K_0$). 
Thus, for any $\fu\in\calr$ we denote 
\begin{equation*}
	U_{\fu}^{Ag}:=(U_{\fu}g^{-1})\cap A=\left\{a\in A\::\:\Ad(ag)\fu\subset\operatorname{span}\left(\Ad(ag)\fg_\ZZ\cap W_0\right)\right\}.
\end{equation*}

Let $X(T)$ be the group of \emph{$\RR$-characters} of $T$. Characters are written additively and are identified with their derivatives, that is, we think of a character as a linear functional on $\mathrm{Lie}(T)$, and $\lambda(t) = \lambda(\mathtt{t})$ for $t=\exp(\mathtt{t})\in T$ and $\lambda\in X(T)$. Fixing a norm $\|\cdot\|$ on $\operatorname{Lie}(T)$, we further use this identification of $T$ with $\operatorname{Lie}(T)$ to %(by abuse of notation)
denote $\|t\|=\|\mathtt{t}\|$ for $t=\exp(\mathtt{t})\in T$.

Let $\varrho_1,\dots,\varrho_r$ and $V_1,\dots,V_r$ be as in \S \ref{sec: compactness criterion}. 
For each $1\le j\le r$, let $\Phi_{j}$ be the set of $\RR$-weights for $\varrho_j$, i.e., the set of characters $\lambda$ of $T$ for which there exists a non-zero vector $v\in V$ such that \[
\varrho_j\left(t\right)v=e^{\lambda\left(t\right)}v \]
for all $t\in T$. 

Recall the definition of $\varphi_{\lambda}$, $\lambda\in\Phi_{j}$, $1\le j\le r$ from \S \ref{sec: bounded Bruhat}. Equip $V_1,\dots,V_r$ with norms such that for each $1\le j\le r$ and any $v\in V_j$, \[
\norm{v}=\max_{\lambda\in\Phi_{j}}\norm{\varphi_{\lambda}(v)}. \]
%Since for any $\fu\in\calr_j$, $1\le j\le r$, the element $\varrho_j(g)\bp_{\fu}$ is a wedge product of vectors in the discrete subset $\Ad(g)\fg_\ZZ$, we may also assume that for any $\fu\in\calr$ \[\norm{\varrho_j(g)\bp_{\fu}}\le1. \]

% According to Proposition \ref{prop: compactness criterion}, these sets cover $A$ up to a compact set. 

\begin{prop}\label{prop: cover locally finite and char lin indp}
	There is a finite set $\Psi\subset X(T)$ which satisfies the following. Given $\fu\in\calr$, there exists a finite set of non-zero  linear functionals  $\Psi_{\fu}\subset \Psi$ and scalars $d_{\fu, \lambda}$, $\lambda\in\Psi$, such that 
	\begin{align}\label{eq: U tilde defn}
	    U_{\fu}^{Ag}&\subset 
	    {U}_{\fu, 0}^{Ag}:= \left\{a\in A\::\:\forall \lambda\in\Psi_{\fu},\:\lambda(a)\ge d_{\fu, \lambda}\right\}
	\end{align}
	\begin{enumerate}
		\item\label{point: the enlargement is locally finite}
		The collection $\{{U}^{Ag}_{\fu, 0}:\fu\in\calr\}$ is locally finite, that is, for any compact set $K$ there are only finitely many elements $\fu\in\calr$ such that ${U}^{Ag}_{\fu, 0}$ intersects $K$.
		\item\label{point: linearly independent characters} Assume  $\fv_1,\dots,\fv_m\in\calr$ are different unipotent radicals. If the intersection $\bigcap_{i=1}^{m} U^{Ag}_{\fv_i}$ is non-empty, then there exist linearly independent characters %\linebreak 
		$\lambda_1,\dots,\lambda_m\in X(T)$ such that for any $1\le i\le m$, $\lambda_i\in\Psi_{\fv_i}$. 
	\end{enumerate}
% 	Moreover, if the orbit $A\pi(g)$ diverges, then $f$ can be assumed to be unbounded. 
\end{prop}
\begin{remark}
Note that the characters in Assertion \eqref{point: linearly independent characters} are linearly independent only as characters over $T$. 
Their restrictions to $A$ may be linearly dependent. This is in fact the reason that the Zariski open set is used in Theorem \ref{thm: alternative intersection} and not the entire Grassmannian. We will only allow groups $A$ for which the restrictions remain linearly independent.
\end{remark}

\begin{proof}
	Let $\fu\in\calr_j$ for some $1\le j\le r$. Let 
	\begin{equation}\label{eq: defn of Psi_fu}
	    \Psi_\fu:=\left\{\lambda:-\lambda\in\Phi_j,\: \varphi_{-\lambda}(\varrho_j(g)\bp_{\fu})\neq0\right\},
	\end{equation}
	and $d_{\fu, \lambda}:=\log\norm{\varphi_{-\lambda}(\varrho_j(g)\bp_{\fu})}$ for all $\lambda\in\Psi_\fu$.  Note that $\Psi_\fu\subseteq\Psi:= -\bigcup_{j=1}^r \Phi_j$ and $\Psi$ is a finite set. 
	In addition, this implies that 
	\begin{align}\label{eq: alternative descriptoin of U^Ag_fu 0}
	    {U}^{Ag}_{\fu, 0} = \{a\in A: \varrho_j(ag)\bp_\fu\in B_j\}, 
	\end{align}
	where $B_j = \{v\in V_j:\|\varphi_\lambda(v)\|<1, \forall \lambda\in \Phi_j\}$.
	The choice of $W_0$ now implies \eqref{eq: U tilde defn}.

	Assertion \eqref{point: the enlargement is locally finite} follows from the discreteness of the set \[\{\varrho_j(g)\bp_{\fu}: \fu\in\calr_j,\:1\le j\le r\},\] as follows.
	Assume $K\subset A$ is a compact set which intersects $U^{Ag}_{\fv_i,0}$ for some infinite sequence $\fv_1,\ldots,\fv_i,\ldots\in\calr$. 
	After switching to a subsequence, we may assume that  $\fv_1,\ldots,\fv_i,\ldots\in\calr_{j_0}$ for some $1\le j_0\le r$.
    By the definition of $U^{Ag}_{\fv_i,0}$, we see that the $\varphi_{\lambda}(\bp_{\fv_i})$ are uniformly bounded for all $i\ge 0$ and $\lambda \in \Phi_j$. 
	Consequently, the sequence $(\bp_{\fv_i})_{i=0}^\infty$ lies in a compact subset of $\bigwedge^{d_j}\fg$. This is not possible because $\bp_{\fv_i}\in \bigwedge^{d_j}\fg_{\ZZ}$ and there are only finitely many integer points in a compact set.
	
	In order to show Assertion \eqref{point: linearly independent characters}, assume  $a\in\bigcap_{i=1}^{m} U^{Ag}_{\fv_i}$. Then, \[  \Ad(ag)\fv_{1},\dots,\Ad(ag)\fv_{m}\subset \spa(\Ad(g)\fg_\ZZ\cap W_0).\]
	By Proposition \ref{prop: W defn}, the space $\spa(\Ad(ag)\fv_{1},\dots,\Ad(ag)\fv_{m})$ is unipotent. Then, by Proposition \ref{prop: at most r intersections} we may assume that for some $1\le j_1<\cdots<j_m\le r$ we have that each $1\le i\le m$ satisfies $\mathfrak{v}_i\in\calr_{j_i}$.    
	Now, Corollary \ref{cor: one conjugating elemnt for intersections} implies that there exists $h\in G$ such that \[
	\fv_i=\Ad(h)\fu_{j_i}\]
	for all $1\le i\le m$. 
	In particular, for any $1\le i\le m$
	\begin{equation}\label{eq: frakv_i conj by h}
		\bp_{\fv_i}=\varrho_{j_i}(h)\bp_{\fu_{j_i}}. 
	\end{equation}
	
	By Observation \ref{obs: fundamental representations} $\varrho_1,\dots,\varrho_r$ are $\RR$-highest weight representations. For any $1\le j\le r$ the highest weight of $\varrho_j$ is denoted by $\chi_j$, and $\bp_{\fu_j}$ is a highest weight vector. 
	Then, by Corollary \ref{cor: weyl elem norm bound} and \eqref{eq: frakv_i conj by h}, there exists $w\in W(\Phi_{\RR})$ such that for all $1\le i\le m$, \[
	w(\chi_{j_i})\in\Psi_{\fv_i}.\] It is enough to show that $w(\chi_{1}),\dots,w(\chi_{r})$ are linearly independent. 
	
	It follows from Eq. \eqref{eq: fundamental weights} that  $\chi_1,\dots,\chi_r$ are linearly independent. Since the Weyl group acts on $X(T)$ by isometries, $w(\chi_{1}),\dots,w(\chi_{r})$ are also linearly independent.  
\end{proof}

For an unbounded monotonically nondecreasing $f:[0,\infty)\to [0,\infty)$ and $\fu\in \calr$ denote
\begin{align}
    {U}_{\fu, f}^{Ag}:= \left\{a\in A\::\:\forall \lambda\in\Psi_{\fu},\:\lambda(a)\ge d_{\fu, \lambda}+f(\norm{a})\right\}  \subseteq {U}_{\fu, 0}^{Ag},
\end{align}
where $d_{\fu, \lambda}$ are as in Proposition \ref{prop: cover locally finite and char lin indp}.
\begin{prop}\label{prop: cover can be f thinner}
Let $A\pi(g_0)$ be a divergent trajectory. 
Then, there exists a function $f$ which (depends on $g_0$ and) satisfies
\[\bigcup_{\fu \in \calr} \left({U}_{\fu, f}^{Ag}\cap {U}_{\fu}^{Ag}\right) = \bigcup_{\fu \in \calr} {U}_{\fu}^{Ag}.\]
\end{prop}
\begin{proof}
    Since $A\pi(g)$ diverges, by Proposition \ref{prop: compactness criterion} and Proposition \ref{prop: bound on vectors}, 
    \[
	\min\{\norm{\varrho_j(ag)\bp_{\fu}}: a \in {U}_{\fu}^{Ag}, \fu\in \calr\}\xrightarrow{\norm{a}\to \infty} 0.\]
    Hence, there exists a non-decreasing function $f:[0,\infty)\rightarrow [0,\infty)$ such that 
     \[
	\min\{\norm{\varrho_j(ag)\bp_{\fu}}: a \in {U}_{\fu}^{Ag}, \fu \in \calr\} <
	e^{-f(\norm a)},\]
	whenever $a\in \bigcup _{\fu\in \calr} {U}_{\fu}^{Ag}$.
	We get the following alternative description of ${U}_{\fu, f}^{Ag}$, which is similar to the description of $U^{Ag}_{\fu, 0}$ in  \eqref{eq: alternative descriptoin of U^Ag_fu 0},
	\begin{align}\label{eq: alternative descriptoin of U^Ag_fu f}
	    {U}^{Ag}_{\fu, 0} = \{a\in A: \varrho_j(ag)\bp_\fu\in e^{-f(\|a\|)}B_j\}, 
	\end{align}
    with $B_j$ as in the proof of Proposition \ref{prop: cover locally finite and char lin indp}. 
	
	Let $a\in \bigcup _{\fu\in \calr} {U}_{\fu}^{Ag}$. Then, there exists $\fu\in \calr$ such that $a\in {U}_{\fu}^{Ag}$. By the definition of $f$ we have \[\norm{\varrho_j(ag)\bp_{\fu}} < e^{-f(\norm a)}.\]
	Now, \eqref{eq: alternative descriptoin of U^Ag_fu f} implies that $a\in {U}_{\fu, f}^{Ag}$, which implies the claim.
\end{proof}

\begin{observation}\label{obs: f Lipschitz}
    Note that in the statement of Proposition \ref{prop: cover can be f thinner}  the function $f$ can be chosen to be $\varepsilon$-Lipschitz for every $\varepsilon>0$. 
\end{observation}

The next observation follows from \eqref{eq: alternative descriptoin of U^Ag_fu f}.
\begin{observation}\label{observation: limit of elements in U doubletilde}
Assume that $f:[0,\infty)\to [0,\infty)$ is monotone nondecreasing and unbounded.
    For any $g\in G$, $\fu\in\calr_j$, $1\le j\le r$, and $\{a_k\}\subset {U}^{Ag}_{\fu, f}$ such that $a_k\rightarrow\infty$ as $k\rightarrow\infty$, we have \[\varrho_j(a_kg)\bp_\fu\underset{k\rightarrow\infty}{\longrightarrow}0. \]
\end{observation}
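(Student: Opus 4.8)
The plan is to unwind the definitions behind $U^{Ag}_{\fu,f}$ from Proposition~\ref{prop: cover locally finite and char lin indp} and reduce the claim to the elementary scalar estimate
\begin{equation*}
\norm{\varrho_j(ag)\bp_\fu}\le e^{-C_0-f(\norm a)}\qquad\text{for every }a\in U^{Ag}_{\fu,f},
\end{equation*}
where $C_0>0$ and the $d_{\fu,\psi}$ are the constants produced in the proof of that proposition; once this is in hand the observation is immediate.

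First I would use that the weight decomposition \eqref{eq: V rho decomp} is $T$-equivariant: for $a\in T$ and $\lambda\in\Phi_j$ one has $\varphi_\lambda(\varrho_j(a)v)=e^{\lambda(a)}\varphi_\lambda(v)$. Together with the chosen norm on $V_j$, namely $\norm{v}=\max_{\lambda\in\Phi_j}\norm{\varphi_\lambda(v)}$, this gives
\begin{equation*}
\norm{\varrho_j(ag)\bp_\fu}=\max_{\lambda\in\Phi_j}e^{\lambda(a)}\norm{\varphi_\lambda(\varrho_j(g)\bp_\fu)}.
\end{equation*}
Now fix a weight $\lambda\in\Phi_j$ with $\varphi_\lambda(\varrho_j(g)\bp_\fu)\neq0$. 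By \eqref{eq: defn of Psi_fu} the character $\psi:=-\lambda$ lies in $\Psi_\fu$, and recall $d_{\fu,\psi}=\log\norm{\varphi_{-\psi}(\varrho_j(g)\bp_\fu)}+C_0=\log\norm{\varphi_\lambda(\varrho_j(g)\bp_\fu)}+C_0$. Hence membership $a\in U^{Ag}_{\fu,f}$ forces $-\lambda(a)=\psi(a)\ge d_{\fu,\psi}+f(\norm a)$, i.e. $\lambda(a)\le-\log\norm{\varphi_\lambda(\varrho_j(g)\bp_\fu)}-C_0-f(\norm a)$. Exponentiating and multiplying by $\norm{\varphi_\lambda(\varrho_j(g)\bp_\fu)}$, the logarithmic term cancels and the corresponding summand is at most $e^{-C_0-f(\norm a)}$; the weights with vanishing component contribute $0$. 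Taking the maximum over $\lambda\in\Phi_j$ proves the displayed estimate.

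Finally I would apply this with $a=a_k$: since $a_k\to\infty$ in $A$ we have $\norm{a_k}\to\infty$, and because $f$ is monotone nondecreasing and unbounded, $f(\norm{a_k})\to\infty$, so $\norm{\varrho_j(a_kg)\bp_\fu}\le e^{-C_0-f(\norm{a_k})}\xrightarrow{k\to\infty}0$. I do not expect a genuine obstacle here; the only points that require a line of care are invoking the $T$-equivariance of the projections $\varphi_\lambda$ and observing that the additive constant $C_0$ built into $d_{\fu,\psi}$ in the proof of Proposition~\ref{prop: cover locally finite and char lin indp} is precisely what makes the logarithmic terms cancel.
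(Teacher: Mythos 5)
Your proof is correct and follows exactly the route the paper leaves implicit: unwind the definitions of $\Psi_\fu$, $d_{\fu,\psi}$, and $U^{Ag}_{\fu,f}$ from Proposition~\ref{prop: cover locally finite and char lin indp}, use the $T$-equivariance of the weight projections $\varphi_\lambda$ together with the chosen max-norm on $V_j$ to get $\norm{\varrho_j(a g)\bp_\fu}\le e^{-C_0-f(\norm a)}$, and let $f(\norm{a_k})\to\infty$. This is precisely the reverse direction of the chain of (in)equalities used in the paper's proof of \eqref{eq:U tilde defn}, so there is no difference in method.
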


For any compact set $K\subset G$ let \[
U_{\fu}^{K,Ag}:=\left\{(k,a)\in K\times A\::\: kag\in U_\fu\right\}\]

\begin{cor}\label{cor: compact trajectory perturbation}
    For any compact set $K\subset G$ and any $\fu\in\calr$, there exists a finite set $\Psi_\fu\subset X(T)$ and $d_{\fu, \lambda}'$, $\lambda\in\Psi$ such that $U_\fu^{K,Ag}\subset K\times \widetilde{U}_\fu^{K,Ag}$, where \[
    \widetilde{U}_\fu^{K,Ag}:=\left\{a\in A\::\:\forall\lambda\in\Psi_\fu,\:\lambda(a)>d_{\fu, \lambda}'\right\}.\]
    Moreover, the collection $\{\widetilde{U}^{K,Ag}_\fu:\fu\in\calr\}$ is locally finite. 
\end{cor}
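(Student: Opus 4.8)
The plan is to run the estimate behind Proposition \ref{prop: cover locally finite and char lin indp}, the only new ingredient being uniformity over the compact set $K$; everything else is a repetition of \S\ref{sec:the cover}. I would fix $\fu\in\calr_j$ and take $\Psi_\fu\subset\Psi$, together with the identification $\psi=-\mu$ of a character $\psi\in\Psi_\fu$ with a weight $\mu\in\Phi_j$, to be exactly the ones from \eqref{eq: defn of Psi_fu}; thus $\Psi_\fu=\{-\mu:\mu\in\Phi_j,\ \varphi_\mu(\varrho_j(g)\bp_\fu)\neq0\}$, which is nonempty since $\varrho_j(g)$ is invertible and $\bp_\fu\neq0$. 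Since $W_0$ may be taken bounded (a bounded sub-neighborhood of $0$ still satisfies Proposition \ref{prop: W defn}), the first observation is that if $\fu\in\calr_j$ is $W_0$-active for $g'$ then $\Ad(g')\fu$ is spanned by $d_j$ linearly independent vectors of $W_0\cap\Ad(g')\fg_\ZZ$, whose exterior product is a nonzero integer multiple of $\varrho_j(g')\bp_\fu$; hence $\norm{\varrho_j(g')\bp_\fu}\le C_1$ for a constant $C_1$ depending only on $W_0$ and the norms (alternatively, after shrinking $W_0$ inside the neighborhood $W_1$ of Proposition \ref{prop: bound on vectors} one gets $\norm{\varrho_j(g')\bp_\fu}<1$ directly).

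The crux is the uniformization. For $(k,a)\in U_\fu^{K,Ag}$ we have $kag\in U_\fu$, hence $\norm{\varrho_j(kag)\bp_\fu}\le C_1$; applying $\varrho_j(k)^{-1}$ and using that $C_2:=C_1\sup_{k\in K}\norm{\varrho_j(k)^{-1}}$ (operator norm) is finite by compactness of $K$, I obtain
\begin{equation*}
\norm{\varrho_j(ag)\bp_\fu}\le C_2,
\end{equation*}
a bound that no longer involves $k$. The point to be careful about is that by estimating $\norm{\varrho_j(ag)\bp_\fu}$ rather than $\norm{\varrho_j(kag)\bp_\fu}$ the perturbation $k$ is absorbed into a harmless constant instead of being allowed to alter the set of relevant characters, so the set $\Psi_\fu$ attached to the base point $g$ keeps working for every $k\in K$ simultaneously. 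Expanding in weights, $\norm{\varphi_{-\psi}(\varrho_j(ag)\bp_\fu)}=e^{-\psi(a)}\norm{\varphi_{-\psi}(\varrho_j(g)\bp_\fu)}$ for $\psi\in\Psi_\fu$, so the displayed bound forces $\psi(a)>d_{\fu,\psi}:=\log\norm{\varphi_{-\psi}(\varrho_j(g)\bp_\fu)}-\log C_2-1$ for all $\psi\in\Psi_\fu$. That is $a\in\widetilde{U}_\fu^{K,Ag}$, giving $U_\fu^{K,Ag}\subseteq K\times\widetilde{U}_\fu^{K,Ag}$.

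For local finiteness I would copy the proof of Assertion \eqref{point:the enlargement is locally finite} of Proposition \ref{prop: cover locally finite and char lin indp}. Given a compact $L\subset A$ with $\widetilde{U}_\fu^{K,Ag}\cap L\neq\emptyset$ and $\fu\in\calr_j$, pick $a\in L$ with $\psi(a)>d_{\fu,\psi}$ for all $\psi\in\Psi_\fu$; since $\Psi$ is finite and $L$ is compact, $M:=\max_{\psi\in\Psi}\sup_{a'\in L}\psi(a')$ is a constant independent of $\fu$, whence $\norm{\varphi_{-\psi}(\varrho_j(g)\bp_\fu)}<C_2e^{M+1}$ for $\psi\in\Psi_\fu$, while the remaining weight components of $\varrho_j(g)\bp_\fu$ vanish by the definition of $\Psi_\fu$. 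Thus $\norm{\varrho_j(g)\bp_\fu}<C_2e^{M+1}$, so $\varrho_j(g)\bp_\fu$ lies in a fixed bounded set; as $\bp_\fu$ ranges over the discrete set $\bigwedge^{d_j}\fg_\ZZ$ and $\varrho_j(g)$ is a fixed invertible operator, only finitely many $\bp_\fu$, hence finitely many $\fu\in\calr_j$, can occur, and taking the union over $j=1,\dots,r$ yields the claim.

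I expect no serious obstacle: the corollary is a parametrized version of Proposition \ref{prop: cover locally finite and char lin indp}, and the uniformization reduces to the boundedness of the operator norm of $\varrho_j(k)^{-1}$ over $k\in K$. The only subtlety worth stating explicitly is the one flagged above — estimating $\norm{\varrho_j(ag)\bp_\fu}$ so that $\Psi_\fu$ stays independent of $k$ — and, more cosmetically, that shrinking $W_0$ at the start is legitimate, since it only shrinks each $U_\fu$ and enlarges $K_0$, leaving all earlier statements intact.
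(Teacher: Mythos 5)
Your proposal is correct and takes essentially the same route as the paper: fix $\Psi_\fu$ and the base exponents from \eqref{eq: defn of Psi_fu}, use compactness of $K$ to absorb the perturbation $k$ into the constant (the paper's $f\equiv c'$ and shift $d_{\fu,\psi}=d'_{\fu,\psi}+c$), and then reuse the local-finiteness argument verbatim. Your version is actually a cleaner account of the compactness step — estimating $\norm{\varrho_j(ag)\bp_\fu}\le\norm{\varrho_j(k)^{-1}}\,\norm{\varrho_j(kag)\bp_\fu}$ makes explicit that the bound goes through $\sup_{k\in K}\norm{\varrho_j(k)^{-1}}$, which is the direction actually needed, and it correctly isolates the point that $\Psi_\fu$ is pinned to $g$ and hence unaffected by $k$.
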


\begin{proof}
    We choose $\Psi_\fu$ and $d_{\fu, \lambda}$ for $\lambda\in\Psi_\fu$ as in the proof of Proposition \ref{prop: cover locally finite and char lin indp} (Eq. \eqref{eq: defn of Psi_fu}). 
    Since $K$ is compact, there exists $c>0$ such that for any $v\in V_j$, $j=1,\dots,r$,\[
    \norm{v}\le c\quad\Longrightarrow\quad\norm{\varrho_j(k)v}\le 1. \]
    Let $d_{\fu, \lambda}' := d_{\fu, \lambda} + \log c$.
    The claim follows via similar computations to the ones in the proof of Proposition \ref{prop: cover locally finite and char lin indp}. 
\end{proof}

% section lie_theory (end)
\section{Covering theorem} % (fold)
\label{sub: covering_theorem}
The main result of this section is Theorem \ref{thm: covering}, which builds a general machinery to disprove the existence of coverings of $n$-dimensional manifolds with covering number at most $n$. 
The machinery requires the approximation of the covering sets and their intersections by open sets for which a certain cohomology group is trivial. 

A consequence of Theorem \ref{thm: covering} is 
Corollary \ref{cor: R_n convex cover}, which approximates open sets with their convex hulls and is used in order to prove Theorem \ref{thm: alternative intersection}. A special case of Corollary \ref{cor: R_n convex cover} is \cite[Theorem 1.4]{S}.
Another consequence of Theorem \ref{thm: covering} is Theorem \ref{thm: covering_bordered}, which approximates open sets by a family of sets that are contractible and almost convex, and we call \emph{bordered}. This result will be used in order to prove Theorem \ref{thm: divergence classification}. 
% We apply Theorem \ref{thm: covering} to Bordered sets and attain Theorem \ref{thm: covering_bordered}.
The techniques used in order to prove Theorem \ref{thm: covering} are enhancements of the techniques used in order to prove \cite[Theorem 1.4]{S}, which are themselves a simplification of ideas that appear in \cite{Mc}.
\begin{de}\label{de:forms}
Let $U\subset \RR^n$ be an open set. Let $\Omega^k(U)$ denote the set of all differential $k$-forms on $U$, and let 
\[\Omega_{\operatorname{bs}}^k(U) = \bigcup_{B\subset U}\ker(\Omega^k(U)\to \Omega^k(U\setminus B)),\]
where the union is taken over all relatively closed bounded sets $B\subset U$, denote
the set of differential $k$-forms on $U$ with bounded support. 
Let $d:\Omega_{\operatorname{bs}}^k(U)\to \Omega_{\operatorname{bs}}^{k+1}(U)$ denote the standard differential, and let $H_{\operatorname{bs}}^k(U)$ be the cohomology of the complex $\Omega_{\operatorname{bs}}^k(U)$. 
An open set $U$ is called \emph{$k$-trivial} if $H_{\operatorname{bs}}^k(U)=0$. 
\end{de}

\begin{de}
An open cover is {\em locally finite} if every compact set intersects only finitely many elements of the cover.
\end{de}
Recall that a continuous map $\tau: X\to Y$ between topological spaces is \emph{proper} if the inverse image of every compact set is compact. 
If $\tau$ is proper and $X, Y$ are orientable manifolds of equal dimension, then the \emph{degree} of $\tau$ is, roughly speaking, the number of inverse images of some (and any) point $y\in Y$ counted with the correct signs (see \cite[\S 3]{gui}). Although \cite[\S 3]{gui} presents the theory of degree only for maps between compact manifolds, the theory can be applied to proper maps with similar proofs.
\begin{thm}\label{thm: covering}
Let $M$ be an orientable boundaryless manifold with $\dim M = n$ and let $\tau: M\to \RR^n$ be a proper smooth map of non-zero degree.
Let $\fU$ be an open covering of $M$. 
Let \[\{E(U_1,\dots,U_k): k\le n, U_1,\dots,U_k\in \fU\}\]
be a collection of open subsets of $\RR^n$ that satisfies the following properties:
\begin{enumerate}[label=\emph{(\arabic*)}, ref=\arabic*]
	\item \label{cond: monotone} $E(U_1,...,U_{k})\subset E(U_1,...,\widehat{U_{j}},...,U_{k})$ for every $1\le j\le k$.
	\item \label{cond: containment} $U_1\cap ...\cap U_{k} \subset \tau^{-1}(E(U_1,...,U_{k}))$, which implies that $\{E(U): U\in \fU\}$ is a covering of $\RR^n$. 
	\item\label{cond: triviality} The set $E(U_1,...,U_k)$ is $(n-k+1)$-trivial. 
	\item\label{cond: locally finiteness} The covering $\{E(U): U\in \fU\}$ of $\RR^n$ is locally finite.
\end{enumerate}
Then there exist $U_1,...,U_{n+1}\in \fU$ with nontrivial intersection. Moreover, for every $x\in \RR^n$ there exist such $U_1,...,U_{n+1}$ with $x\in \overline{E(U_1)}$.
\end{thm}
Before proving Theorem \ref{thm: covering} we present several consequences that are used in order to prove Theorems \ref{thm: divergence classification} and \ref{thm: alternative intersection}.
Corollary \ref{cor: R_n convex cover} is a special case of Theorem \ref{thm: covering}, when (heuristically) setting $E(U_1,...,U_k) = \conv(\tau(U_1\cap\dots\cap U_k))$. To apply Theorem \ref{thm: covering} we need to understand when a convex set is $k$-trivial.
\begin{de}
	The {\em invariance dimension} of a convex open set $U\subset \RR^n$ is the dimension of its stabilizer where $\RR^n$ acts on its subsets by translations, that is, \[\invdim U:=\dim \stab_{\RR^n}(U).\]
	By convention, $\invdim \emptyset:=-\infty$. 
\end{de}
The next claim follows from \cite[Theorem 3.3]{S}. 
\begin{claim}
\label{claim: invdim = k-trivial}
Let $U\subset \RR^n$ be a convex open set and $1\le k\le n$. The set $U$ is not $k$-trivial if and only if $k=\invdim U$ and $U/\stab_{\RR^n}(U) \subset \RR^n/\stab_{\RR^n}(U)$ is bounded.
In particular, $U$ is $k$-trivial for every $k\neq \invdim U$.
\end{claim}
\begin{cor}\label{cor: R_n convex cover}
	Let $M$ be an orientable boundaryless manifold with $\dim M = n$ and $\tau: M\to \RR^n$ be a proper smooth map of non-zero degree.
	Let $\fU$ be an open covering of $M$. 
	Assume that the following assertions hold. 
	\begin{enumerate}
		\item The cover \[\{\conv (\tau (U)): U\in \fU\}\] is locally finite.
		\item For every $k\le n$ and $U_1, U_2, ..., U_k\in \fU$ with nontrivial intersections one has \[\invdim \conv (\tau(U_1\cap U_2\cap ...\cap U_k)) \neq n-k+1.\]
	\end{enumerate} Then there are $n+1$ sets in $\fU$ with nontrivial intersection. 
\end{cor}
\begin{proof}
Using Claim \ref{claim: invdim = k-trivial}, the result follows directly from Theorem \ref{thm: covering} with $E(U_1,...,U_k) = \conv(\tau(U_1\cap ...\cap U_k))+B(1)$. We take the Minkovski sum with the unit ball $B(1)$ in the definition of $E(U_1,..., U_k)$ to ensure that the convex set is open.
\end{proof}

\subsection{Proof of Theorem \ref{thm: covering}} % (fold)
\label{sub: proof_of_theorem_thm: covering}
Let $M$ be an orientable boundaryless manifold with $\dim M = n$ let and $\tau: M\to \RR^n$ be a proper smooth map of non-zero degree.
Let $\fU$ and $E$ be as in Theorem \ref{thm: covering}.

For every finite sequence $J\subset \fU$ denote the intersection of the sets in $J$ by $U_J := \bigcap_{U\in J}U$. All sequences $J\subset \fU$ are assumed to be without repetition.
We say that $X\subset M$ is \emph{bounded} if its image $\tau(X)$ is bounded. 
With this notion, we can define boundedly supported $q$-forms $\Omega_{\rm bs}^{q}(U)$ for open subsets $U\subset M$ and the boundedly supported cohomology $H_{\rm bs}^{q}(U)$ similarly to the way these notions are defined on subsets of $\RR^n$ in Definition \ref{de:forms}. 
As mentioned before, by \cite[Theorem 3.3]{S} we have that $H_{\rm bs}^n(\RR^n) \cong \RR$. 

Note that for every set $W\subset\RR^n$ there is a pullback map $\tau^*:\Omega_{\rm bs}^q(W)\to \Omega_{\rm bs}^q(\tau^{-1}(W))$. 
In particular, for every finite sequence $J\subset \fU$ of size at most $n+1$ we have a map $\tau^*:\Omega^q_{\rm bs}(E(J))\to \Omega^q_{\rm bs}(U_J)$. Since $\tau^{*}$ commutes with $d$, it induces a map $\tau^*: H_{\rm bs}^q(W)\to H_{\rm bs}^q(\tau^{-1}(W))$

\begin{lem}\label{lem: tau 121}
The map $\tau^*: H^{n}_{\rm bs}(\RR^n) \to H^{n}_{\rm bs}(M)$ is one to one. 
\end{lem}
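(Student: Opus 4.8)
The plan is to identify $H^n_{\rm bs}$ with compactly supported de Rham cohomology on both sides, and then to use the degree hypothesis on $\tau$ together with Stokes' theorem to show that $\tau^*$ carries a generator to a nonzero class; since $H^n_{\rm bs}(\RR^n)$ is one-dimensional, this is exactly injectivity.

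First I would note the following bookkeeping. For $U=\RR^n$, a relatively closed bounded subset is compact, so $\Omega^\bullet_{\rm bs}(\RR^n)=\Omega^\bullet_c(\RR^n)$ and hence $H^n_{\rm bs}(\RR^n)=H^n_c(\RR^n)\cong\RR$, the isomorphism being $[\omega]\mapsto\int_{\RR^n}\omega$ (this is \cite[Theorem 3.3]{S}, already recalled). On $M$, if $\omega\in\Omega^n_{\rm bs}(M)$ then $\tau(\supp\omega)$ is bounded, so it is contained in some compact $K\subset\RR^n$; then $\supp\omega\subseteq\tau^{-1}(K)$, which is compact because $\tau$ is proper, and a relatively closed subset of a compact set is compact. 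Hence $\Omega^\bullet_{\rm bs}(M)=\Omega^\bullet_c(M)$ and $H^n_{\rm bs}(M)=H^n_c(M)$. In the same way, for $\omega\in\Omega^n_{\rm bs}(\RR^n)$ one has $\supp(\tau^*\omega)\subseteq\tau^{-1}(\supp\omega)$, whose image under $\tau$ is bounded, so $\tau^*\omega\in\Omega^n_{\rm bs}(M)$, consistent with the map $\tau^*$ already defined.

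Next, fix an orientation on $M$ (it is orientable and boundaryless). Integration is then defined on $\Omega^n_c(M)$, and Stokes' theorem gives $\int_M d\eta=0$ for every $\eta\in\Omega^{n-1}_c(M)$, so $\int_M$ descends to a linear map $H^n_c(M)\to\RR$. Now choose $\omega\in\Omega^n_c(\RR^n)$ with $\int_{\RR^n}\omega=1$, so $[\omega]$ generates $H^n_{\rm bs}(\RR^n)$. The degree formula for proper smooth maps between oriented boundaryless manifolds of equal dimension — valid here as remarked after the statement of Theorem \ref{thm:covering} — yields $\int_M\tau^*\omega=(\deg\tau)\int_{\RR^n}\omega=\deg\tau\neq 0$. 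In particular $\tau^*\omega$ is not exact in $\Omega^n_{\rm bs}(M)$, so $\tau^*[\omega]=[\tau^*\omega]\neq 0$ in $H^n_{\rm bs}(M)$. Since $H^n_{\rm bs}(\RR^n)$ is one-dimensional and spanned by $[\omega]$, a linear map out of it is injective as soon as it is nonzero on the generator, which is what we have shown.

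The only point requiring genuine care is justifying the applicability of the degree formula: one must check that the $\int_M\tau^*(\cdot)$-definition of $\deg\tau$ coincides with the topological degree appearing in the hypotheses, and that the resulting number is independent of the choice of $\omega$ (the latter being immediate from $H^n_{\rm bs}(\RR^n)\cong\RR$). Everything else is the standard de Rham/Stokes machinery, applied to proper rather than compact manifolds exactly as indicated in the paper.
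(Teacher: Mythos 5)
Your argument is the same as the paper's: both express injectivity via the commutativity of the integration square $\int_M \tau^*\omega = (\deg\tau)\int_{\RR^n}\omega$ together with $H^n_{\rm bs}(\RR^n)\cong\RR$ and $\deg\tau\neq 0$. You simply fill in the bookkeeping the paper leaves implicit (that properness of $\tau$ makes bounded-support forms compactly supported on both sides, so the usual degree formula and Stokes' theorem apply), which is correct.
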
 
\begin{proof}
Since both $\RR^n$ and $M$ are $n$-dimensional the integration map satisfies 
\[\xymatrix{H^{n}_{\rm bs}(\RR^n) \ar[r]^{\tau^*}\ar[d]^{\int}& H^{n}_{\rm bs}(M)\ar[d]^{\int}\\\RR\ar[r]^{\deg \tau}&\RR}\]
which implies the desired result, using $H^n_{\rm bs}(\RR^n)\cong \RR$. 
\end{proof}

We will construct two complexes: 
\[\cA^{p,q} := \bigoplus _{J\subset \fU, \#J=p+1}\Omega_{\rm bs}^{q}(U_J),\quad  \forall p, q\ge 0\]
and 
\[\cB^{p,q} := \bigoplus _{J\subset \fU, \#J=p+1}\Omega_{\rm bs}^{q}(E(J)),\quad \forall p, q\ge 0,\]
with the standard derivations
\begin{align*}
    \delta:\cA^{p,q}&\to \cA^{p+1, q},& \delta:\cB^{p,q}&\to \cB^{p+1, q},\\ d:\cA^{p,q}&\to \cA^{p, q+1},& d:\cB^{p,q}&\to \cB^{p, q+1},
\end{align*}
as in \cite[\S 10]{BT2} or \cite[\S 3.4]{S}. The maps $\tau^*:\Omega_{\rm bs}^{q}(E(J))\to \Omega_{\rm bs}^{q}(U_J)$ for $J\subset \fU$ of size $p+1$ can be summed to a single map $\tau^*:\cB^{p, q}\mapsto \cA^{p, q}$. Note that the derivations $d$ and $\delta$ commute with $\tau^*$ and $d$ commutes with $\delta$, i.e., $\delta d - d\delta = 0$.

There is a mild subtlety here. In the statement of Theorem \ref{thm: covering}, the sets $E(J)$ are assumed to only be defined for sequences $J$ of size at most $n$, while in the definition of $\cB^{*,*}$ we use $E(J)$ for larger sequences $J$ as well. Thus, we define 
\[E(J) := \bigcap_{J'\subset J,\ \#J'=n} E(J'), ~~\text {whenever }\#J>n.\] 
The above preserves Conditions (\ref{cond: monotone}), (\ref{cond: containment}) of Theorem \ref{thm: covering} regarding $E$, but not (\ref{cond: triviality}).

% For any $q\ge 0$ 
Denote by $\tot(\cA)^*$ and $\tot(\cB)^*$ the total complexes of $\cA^{*,*}$ and $\cB^{*,*}$, respectively. 
That is, $\tot(\cA)^k$ is given by $\tot(\cA)^k = \bigoplus_{p+q=k}\cA^{p,q}$, 
and the differential $D:\tot(\cA)^k\to \tot(\cA)^{k+1}$ is defined by 
\[D|_{\cA^{p,q}} = d+(-1)^q\delta.\]
The total complex $\tot(\cB)^*$ is defined similarly. 
Let the maps $i_{\cA}:\Omega^*_{\rm bs}(M)\to \tot(\cA)^*$ and $i_{\cB}:\Omega^*_{\rm bs}(\RR^n)\to \tot(\cB)^*$ be the direct sum of the restrictions to $\cA^{0,*}$ and $\cB^{0,*}$, respectively. 
Both maps commute with $\tau^*$, that is 
\[\xymatrix{
\Omega^*_{\rm bs}(\RR^n)\ar[r]^{i_\cB}\ar[d]^{\tau^*}& \tot(\cB)^*\ar[d]^{\tau^*}\\
\Omega^*_{\rm bs}(M)\ar[r]^{i_\cA}& \tot(\cA)^*}\]
The maps $i_\cA$ satisfies that for every $\omega \in \Omega^k(M)$ we have 
\begin{align}\label{eq: iA properties}
    d i_{\cA}(\omega) = i_{\cA}(d\omega)\quad \text{and}\quad \delta i_{\cA}(\omega)=0,
\end{align}
and similarly $i_\cB$ satisfies that for every $\omega \in \Omega^k(
\RR^n)$ we have 
\begin{align}\label{eq: iB properties}
    d i_{\cB}(\omega) = i_{\cB}(d\omega)\quad \text{and}\quad \delta i_{\cB}(\omega) = 0.
\end{align}

The next lemma is from \cite[Proposition 8.5]{BT2} or \cite[Theorem 3.7]{S}. It states that $\delta$-derivation on $\cA^{*,*}$ gives the complex $\Omega^*_{\rm bs}(M)$. We need the following formulation of it.
\begin{lem}\label{lem: import i is quaziequivalence}
The map $i_\cA:\Omega^*_{\rm bs}(M)\to \tot(\cA)^*$ induces an isomorphism on the cohomologies. 
\end{lem}

\begin{observation}[Consequence of Condition (\ref{cond: triviality})]\label{obs: d exact}
Condition (\ref{cond: triviality}) states that $d$ is exact on $\cB^{p,q}$ whenever $p+q = n$ and $p<n$.
\end{observation}

For every $\omega\in \cB^{p,q}$ denote by 
$\supp \omega$ the set of sequences $J\subset \fU$ such that the $J$-component of $\omega$ is nontrivial. 
Applying this definition to Observation \ref{obs: d exact} we get the following.

\begin{observation}
[Behavior of the support]\label{obs: implication of support}
Let $p,q$ be such that $p+q = n$, $p<n$, and $\omega\in \cB^{p,q}$. If $d\omega = 0$, then for some $\omega'\in \cB^{p,q-1}$ we have $d\omega' = \omega$ and $\supp \omega' = \supp \omega$. 
In addition, by the definition of $\delta$, for every $\omega\in \cB^{p,q-1}$ and $J\in \supp \delta\omega$ there is $J'\in \supp \omega$ with $J'\subset J$.
\end{observation}

\begin{proof}[Proof of Theorem \emph{\ref{thm: covering}}] % First, override all $E(J)$ such that if $U_J = \emptyset$ then $E(J) = \emptyset$. It preserves all conditions of the theorem.

Pick an element $\omega\in \Omega^n_{\rm bs}(\RR^n)$ with nontrivial integral and set $\omega_0 = i_{\cB}(\omega)\in \cB^{0, n}$. Since $d\omega = 0$ we use Eq. \eqref{eq: iB properties} to deduce that  $d\omega_0 =\delta\omega_0 = 0$.  

We construct $\omega_{k}\in \cB^{k, n-k}$ recursively so that  $d\omega_k = \delta\omega_k = 0$ and the image of $\omega_k$ in $H^n(\tot(\cB))$, denoted $[\omega_k]$, coincides with $[\omega_0]$. Suppose we have constructed $\omega_{k-1}\in \cB^{k-1, n-k+1}$. Then, by Observation \ref{obs: implication of support} there exists $\psi_k\in \cB^{k-1, n-k}$ such that $d\psi_k = \omega_{k-1}$. Choose $\omega_k = -(-1)^{n-k}\delta\psi_k$. Then, $\omega_{k-1}-\omega_k = D\psi_k$ is a boundary in $\tot(\cB)$ and hence $[\omega_{k-1}] = [\omega_k]$.
Note that
\begin{align*}
    d\omega_k &= -(-1)^{n-k}d\delta\psi_k = -(-1)^{n-k}\delta d\psi_k = -(-1)^{n-k}\delta\omega_{k-1} = 0\\
    \delta\omega_k &= -(-1)^{n-k}\delta^2\psi_k = 0,
\end{align*}
as desired. 
We get that $[\omega_0] = [\omega_n]$ in $H^n(\tot(\cB))$. Hence, we also have $\tau^*[\omega_0] = \tau^*[\omega_n]$.
By Lemma \ref{lem: tau 121}, $[\omega]\neq 0$ implies that $0\neq \tau^*[\omega]$. By Lemma \ref{lem: import i is quaziequivalence}, $0\neq i_{\cA}^*\tau^*[\omega]$. Since
\begin{align*}
0\neq i_\cA^*\tau^*[\omega] = \tau^*i_\cB^*[\omega] = \tau^*[\omega_0] = \tau^* [\omega_n],
\end{align*}
we get that $\tau^*\omega_n\neq 0$. Since $\tau^*\omega_n\in \cA^{n,0}$ it follows that there is $J\subset \fU_n$ of size $n+1$ with nonempty $U_J$. 
Moreover, this $J$ must be in $\supp \omega_n$.

Let $x\in \RR^n$. Since $\{E(U): U\in \fU\}$ is locally finite, there is a small neighborhood $x\in V\subset \RR^n$ such that $E(U)\cap V\neq \emptyset$ if and only if $x\in \overline{E(U)}$ for every $U\in \fU$. Additionally, there are finitely many $U_1,...,U_r\in \fU$ such that $x\in \overline {E(U_i)}$. 
Choose $\omega\in \Omega^n(\RR^n)$ which is supported on $V$. 

By our construction, $\supp \omega_0 \subseteq \{(U_1,),...,(U_r,)\}$. Here $(U_i,)$ is a sequence of size $1$ of indices in $\fU$. Since $\psi_1$ was chosen using  Observation \ref{obs: implication of support}, it has the same support as $\omega_0$. 
Looking at $\supp\omega_1$, we see that every $J\in \supp\omega_0$ must contain an element in $\supp\psi_1 = \supp\omega_0$, i.e., one of the desired $U_i$-s.

Continuing in this fashion, we conclude that $\supp\psi_k = \supp \omega_{k-1}$ and that every $J\in \supp\omega_k$ contains at least one of the $U_i$-s. 
That is, for some $J\in \supp \omega_n$ we have $U_J\neq \emptyset$, and this $J$ contains one of the desired open sets.
\end{proof}

% subsection proof_of_theorem_thm: covering (end)

% \section{Thing we need to prove} % (fold)
% \label{sec: thing_we_need_to_prove}
% In order to prove Theorem \ref{thm: intersection} we need two lemmas:

% \begin{lem}\label{lem: bound on parabolic}
% Let $\bP\le \bG$ be a parabolic subgroup that is contained in the maximal parabolic subgroup $\bP_1,...,\bP_k\le \bG$. Then the set 
% $$U_{\bP,\varepsilon} := \{a\in A:\|\Ad(ax)\cov P_i\|\}$$
% is bounded by $k$ linearly independent linear equation.
% \end{lem}
% \omricomment{This is the only lemma that is wrong. See Section \ref{sec: counter_example}}.
% \begin{lem}
% For every $x\in G$ there are finitely many $\bP$ with bounded $\|\Ad(x)\cov P_i\|$.
% \end{lem}

% In order to prove \ref{thm: divergence classification} we need a bit more. 
% From Theorem \ref{thm: covering} we will learn that for every $\varepsilon>0$ there exists finitely many parabolic subgroups $\bP_1,...\bP_l\le \bG$ and a compact set $A_\varepsilon$ such that for every $a\in A\setminus A_\varepsilon$ the vector $\Ad(ax)\cov P_i$ is $\varepsilon$-small. We need to glue this information somehow to prove that the trajectory is indeed divergent.
% \omricomment{I think that the way to do so is something like that for every point $ax$ there are a bounded amount of small parabolic, and hence the set of parabolics we need to take will stabilize as $\varepsilon\to 0$.}

\section{Bordered sets}
\label{sec: bordered sets}
In \S \ref{sec: the cover} we saw that a divergent trajectory gives rise to a cover of $A$; in view of Proposition \ref{prop: cover can be f thinner}, the sets in this cover form a peculiar family of shapes, namely $U^{Ag}_{\fu, f}$. In this section, we analyze these sets and derive sufficient conditions for their contractibility.

Fix a norm $\|\cdot \|$ on $\RR^n$ through out this section. It may not be the euclidean norm, even though we use the standard scalar product on $\RR^n$.
\begin{de}\label{defn: bordered set}
Let $\Phi\subset(\RR^n)^*$ be a finite set of linear functionals, and let $f:[0,\infty)\to \RR$ 
be an increasing divergent function. 
An open set of the form 
\[U=\{  x\in\RR^n:\varphi(  x) > C_\varphi +  f(\|  x\|)\text{ for all }\varphi\in \Phi'\}\]
for some $\emptyset \neq \Phi' \subset \Phi$ and $(C_\varphi)_{\varphi\in \Phi'}\subset \RR$ is a {\em $(\Phi, f)$-bordered set}.
The set $\Phi'$ is called the {\em functional set} of $U$. If $\Phi$ and $f$ are clear from the context we will omit them and simply call the set bordered.
Note that a finite intersection of bordered sets is itself a bordered set. 

A set of vectors $S$ in a real vector space $V$ is said to be {\em positively nontrivial} if every nontrivial non-negative combination of them is nontrivial, equivalently, if  \[
0\notin\operatorname{conv}(S). \]
\end{de}

The following result lists some properties of bordered sets.
\begin{thm}\label{thm: bordered properties}
For every $\Phi$ there exists $\varepsilon>0$ such that for every $\varepsilon$-Lipschitz increasing divergent $f:[0,\infty)\to [0,\infty)$ the following assertions hold:
\begin{enumerate}[label=\emph{(\arabic*)}, ref=\arabic*]
	\item \label{prop: contractible} Every nonempty bordered set is contractible. 
	\item A bordered set is bounded if and only if its functional set is not positively nontrivial. 
	\item \label{prop: no boundedly supported rcohomologies} 
	 If a bordered set $U$ is unbounded, then there are arbitrarily large bounded subsets $U_0\subset U$ such that $U\setminus U_0$ is contractible.
\end{enumerate}
\end{thm}
\begin{remark}
The constant $\varepsilon$ depends also on the norm $\|\cdot \|$ on $\RR^n$. 
\end{remark}

We postpone the proof of Theorem \ref{thm: bordered properties} to later on in this section and list next a few of its corollaries.
Note that Property \ref{prop: no boundedly supported rcohomologies} is not shared by convex sets. In particular, a cylinder does not satisfy it. Formally, Property \ref{prop: no boundedly supported rcohomologies} implies the following.
\begin{cor}\label{cor: bordered are trivial}
Every bordered set is $k$-trivial for every $k > 0$. 
\end{cor}
Combining Corollary \ref{cor: bordered are trivial} with Theorem \ref{thm: covering}, and recalling that a finite intersection of bordered sets is bordered, we obtain 
\begin{thm}\label{thm: covering_bordered}
Let $\Phi,f$ be as in Theorem \emph{\ref{thm: bordered properties}}. Let $\fU$ be an open covering of $\RR^n$. Assume that for every $U\in \fU$ there is given a bordered set $E(U)$ with $U\subset E(U)\subsetneq\RR^n$.
Assume furthermore that the covering $\{E(U): U\in \fU\}$ is locally finite.
Then, there exist $U_1,...,U_{n+1}\in \fU$ with a nontrivial intersection. Moreover, for every $x\in \RR^n$ there exist such $U_1,...,U_{n+1}$ satisfying in addition $x\in \overline{E(U_1)}$.
\end{thm}

\begin{proof}[Proof of Theorem \emph{\ref{thm: bordered properties}}]
Fix $\varepsilon>0$ to be determined later and an $\varepsilon$-Lipschitz and divergent function $f:[0,\infty)\to [0,\infty)$. We need upper bounds on $\varepsilon$ throughout the proof, and choose $\varepsilon$ to be the minimum of these upper bounds.
% Assume that $\|\varphi\| = 1$ for all $\varphi\in \Phi$. 
Without loss of generality restrict $\Phi$ to be the functional set of $U$, and assume 
\[U = \{  x\in\RR^n:\varphi(  x) > C_\varphi +  f(\|  x\|)\text{ for } \varphi \in \Phi\}.\]

We distinguish between two cases: 
\begin{enumerate}[label=(\alph*)]
	\item \label{case: compact} $\Phi$ is not positively nontrivial.
	\item \label{case: non compact} $\Phi$ is positively nontrivial. 
\end{enumerate}

\noindent{\bf Case \ref{case: compact}:}
Define
\[\varrho, \varrho_0:\RR^n \to \RR;~~\varrho_0(  x) := \min_{\varphi\in \Phi}\varphi(  x)-C_\varphi\text{ and }\varrho(  x) := \varrho_0(  x) - f(\|  x\|).\]
Note that $U = \{  x\in \RR^n:\varrho(  x) > 0\}$ and $\varrho_0 \ge \varrho$. 
We will construct a contraction map of $\RR^n$ and show that it expands $\varrho$, thus it contracts $U$. 

By assumption, there is a combination $\sum_{\varphi\in \Phi} \varphi\alpha_\varphi = 0$. Therefore, for every $x$ one of the functions $\varphi_i$ is nonpositive at $x$, and hence $\varrho_0$ is bounded from above. Since $\varrho_0$ is piecewise linear with finitely many possible slopes, it attains a maximum $M$.
Denote by $V = \{  x\in \RR^n:\varrho_0(  x) = M\}$, which is a closed convex polytope. Let $A:\RR^n\to V$ be the map that assigns to each vector $  x\in \RR^n$ the unique closest vector $A(  x) \in V$. The map $A$ is continuous since $V$ is convex. 

The first part of the contraction will move every vector linearly from $  x$ to $A(  x)$. 
\begin{claim}\label{claim: trajectory up}
The map $t\mapsto \varrho(tA(  x) + (1-t)   x)$ is monotonically nondecreasing for $t\in [0,1]$.
\end{claim}
As $\varrho = \varrho_0-f$, to prove the claim
we will show that $\varrho_0$ increases faster then $  v\mapsto f(\|  v\|)$ along the trajectory $t\mapsto tA(  x) + (1-t)   x$. 
\begin{claim}\label{claim: trajectory0 up}
The map $\psi: t\mapsto \varrho_0(tA(  x) + (1-t)   x)+ \varepsilon (1-t) \|A(  x)-  x\|$ is monotonically nondecreasing for $t\in [0,1]$, provided that $\varepsilon$ is small enough.
\end{claim}
\begin{proof}
If $A(  x) =   x$, then $\psi$ is constant and there is nothing to prove. Assume now that $A(  x) \neq   x$.
Claim \ref{claim: trajectory0 up} does not involve $f$ and is invariant under translations. Assume without loss of generality that $A(  x) =   0$. Since $V$ is a polytope, there is a neighborhood $0\in W\subset\RR^n$ of $  0=A(  x)$ such that $V\cap W$ is defined only by inequalities $\varphi_i-C_i\ge M$ that become equality at $  0$, that is, $\Phi'=\{\varphi\in \Phi:-C_\varphi = M\}$. In other words, $\Phi'$ satisfies that 
\[V\cap W = \{  v\in W:\varphi(  v) \ge 0 \text{ for } \varphi\in \Phi'\}.\]

Since locally $0\in V$ is the closest point to $x$, it follows that $\bra x, v\ket \le 0$ for every $  v\in V\cap W$, and hence $-  x$ lies in the dual cone to $V\cap W$. This dual cone is generated by the vectors $  v_{\varphi}$ for $\varphi\in \Phi'$, where $  v_\varphi$ is the unique vector that satisfies 
$\bra   v_{\varphi},   y\ket = \varphi(  y)$ for all $  y\in \RR^n$. 
Hence, there is a negative combination $  x = \sum_{\varphi\in \Phi'}\beta_\varphi   v_{\varphi}$. Consequently, there is a linearly independent $\Phi''\subset \Phi'$ such that $x = \sum_{\varphi\in \Phi''}\gamma_\varphi  v_{\varphi}$ for negative $\gamma_\varphi$. 
Since $\bra x, x\ket >0$, there exists $\varphi_0\in \Phi''$ such that $\varphi_0(  x) < 0$. Lemma \ref{lem: find varepsilon} gives an upper bound on $\varepsilon>0$ that depends only on $\Phi''$ and guarantees that $\varphi_0(  x) \le -\varepsilon \|x\|$. 

The function $\psi$ is concave, takes the value $M$ at $t=1$ and satisfies 
\begin{align*}
\psi(t)
&=\varrho_0(tA(  x)+ (1-t)   x) + \varepsilon (1-t)\|A(  x)-  x\|  \\&\le \varphi_0(tA(  x) + (1-t)   x) - C_{\varphi_0} + \varepsilon (1-t)\|A(  x)-  x\| \\ & = 
(1-t)\varphi_0(  x) - C_{\varphi_0} + \varepsilon (1-t)\|  x\| \\& \le -C_{\varphi_0} = M.
\end{align*}
Thus $\psi$ is bounded by its value at $1$. Since $\psi$ is concave, it is nondecreasing for $t\in [0,1]$.
\end{proof}
\begin{lem}\label{lem: find varepsilon}
For every linearly independent subset $\Phi'\subset \Phi$ and every $  x = \sum_{\varphi\in \Phi'}\beta_\varphi v_{\varphi}$ with $\beta_\varphi \le 0$ there is $\varphi\in \Phi'$ with $\varphi(  x)\le -\varepsilon\|  x\|$, provided that $\varepsilon>0$ is small enough.
\end{lem}
\begin{proof}
For every linearly independent subset $\Phi'\subset \Phi$ we will give an upper bound on $\varepsilon$ separately such that $\varphi(  x) \le \varepsilon \|  x\|$ for some $\varphi \in \Phi'$. Assume then that $\Phi = \Phi'$ is an independent set of functionals. 
Since the inequality we want to prove is homogeneous, we may assume $\|  x\| = 1$. 
The set of possible values of $  x$ is
\[X := \left\{\sum_{\varphi\in \Phi}\beta_\varphi v_{\varphi} :\left\|\sum_{\varphi\in \Phi}\beta_\varphi v_{\varphi}\right\| = 1, \beta_\varphi\le 0\right\},\]
which is compact. 
Hence we only need to show that 
\[\min_{\varphi\in \Phi} \varphi(  x) < 0, \quad \forall x\in X\]
and the compactness of $X$ and the continuity of the function $\min_{\varphi\in \Phi} \varphi$ will guarantee a desired bound on $\varepsilon$. Since
 \[\bra   x,   x\ket = \bra   x, \sum_{\varphi\in \Phi}\beta_\varphi   v_\varphi\ket = \sum_{\varphi\in \Phi}\beta_\varphi\varphi(  x)=1,\]
and since the $(\beta_\varphi)_{\varphi\in \Phi}$ are nonpositive, it follows that one of the $\varphi(  x)$ is negative.
\end{proof}
\begin{proof}[Proof of Claim \emph{\ref{claim: trajectory up}}]
Since $f$ is $\varepsilon$-Lipschitz, $\|\cdot\|:\RR^n \to \RR$ is $1$-Lipschitz, and $t\mapsto tA(  x) + (1-t)   x$ is $\|  x-A(  x)\|$-Lipschitz, it follows that $t\mapsto f(\|tA(  x) + (1-t)   x\|)$ is $\varepsilon\|  x-A(  x)\|$-Lipschitz. By Claim \ref{claim: trajectory0 up}, $$\varrho_0(tA(  x) + (1-t)   x)+\varepsilon (1-t)\|A(  x)-  x\|$$ is nondecreasing, hence so is $$\varrho_0(tA(  x) + (1-t)   x) -f (\|tA(  x) + (1-t)   x\|)  = \varrho(tA(  x) + (1-t)   x).$$
\end{proof}
Consider the homotopy $h_0(t,  x) := tA(  x) + (1-t)   x$, and note that $\varrho(h_0(  x, t))$ is nondecreasing with respect to $t$. 
The end of the homotopy $h_0|_{t=1}$ lies in $V$. 

Next, we construct a contraction of $V$. 
Let $  u = A(0)\in V$ be the point that minimizes the distance to $0$, and define $h_1(t,   v) = t  v + (1-t)  u$. By the convexity of $\|\cdot \|$, for every fixed $  v\in V$ the function $t\mapsto\|h_1(t,   v)\|$ is decreasing, and hence the function $t\mapsto f(\|h_1(t,   v)\|)$ is decreasing. Therefore 
\[t\mapsto\varrho(h_1(t,   v)) = M-f(\|h_1(t,   v)\|)\]
is increasing. It follows that $\varrho$ increases on every trajectory of $h_1$. 
Thus, the concatenation of $h_0$ and $h_1$ is a contraction of $\RR^n$ and $\varrho$ increases along its trajectories.

Hence, $U = \{  x\in \RR^n:\varrho(  x)>0\} $ is contractible if it is nonempty. 

Since $\varrho_0$ is bounded and $\lim_{\|  x\|\to \infty}f(\|  x\|)=\infty$, it follows that $\varrho(  x)\to -\infty$ as $\|  x\|\to \infty$, and hence $U$ is bounded.

{\bf Case \ref{case: non compact}:}
In this case, the set $U$ is unbounded. Indeed, since the functional set $\Phi$ of $U$ is positively nontrivial, there is a vector $  v =   v(\Phi')$ such that $\varphi(  v)>0$ for all $\varphi\in \Phi'$. 
If $f$ is $\left(\varphi(  v)/\|  v\|\right)$-Lipschitz, then for $s>0$ we have 
$$\varphi(s  v)-f(\|s  v\|) \xrightarrow{s\to \infty}\infty.$$ It follows that $s  v\in U$ for $s$ large enough.
Moreover, if $  x\in U$ then $  x + s  v \in U$ as well for every $s>0$, and if $  x\in \RR^n$ then $x+s  v\in U$ for all $s$ large enough.
Hence, provided 
\[\varepsilon \le \min_{\varphi\in \Phi'}\frac{\varphi(  v)}{\|  v\|},\]
$U$ will not be bounded.

To show that $U$ is contractible, we will show that every compact subset $K\subset U$ can be contracted in $U$. 
Suppose $K\subset U$ is compact. It has a contraction in $\RR^n$, denoted $h: K\times [0, 1]\to \RR^n$. Since the image of $h$ is compact, there is $s>0$ such that $h(t,  x)+s  v\in U $ for all $  x \in K, t\in [0,1]$. 
Thus, to contract $K$ in $U$ we may first add to it $s  v$ and then contract $K+s  v$ using $h+s  v$. 

Since every compact subset of $U$ can be contracted in $U$, it follows that $U$ is connected and all homotopy groups $\pi_i(U)$ vanish. Since $U$ is a separable manifold, it is a $CW$-complex, and hence, by Whitehead's Theorem (see \cite[Theorem 4.5]{hatcher}), $U$ is contractible.

%{\bf Part \eqref{prop: no boundedly supported rcohomologies} of Theorem \ref{thm: bordered properties}:}
We will show that there exist open subsets $U_i\subset U$ for $i\in \NN$ such that 
\begin{itemize}
\item $U\setminus U_i$ is bounded
\item For every bounded set $B\subset U$ there is $i\in \NN$ such that $B\subset U\setminus U_i$.
\item $U_i$ is contractible.
\end{itemize}
Fix $\varphi_0\in \Phi$.  Since $\Phi$ is positively nontrivial, it follows that $\varphi_0 \neq 0$.
For every $C>0$ denote $U_C = U\cap \varphi_0^{-1}((C, \infty))$.
We claim that $U\setminus U_C$ is bounded.
Indeed, if $  x\in U\setminus U_C$, then $C > \varphi_0(  x) \ge C_{\varphi_0}$ and since $\varphi_0(  x)-f(\|  x\|) > C_{\varphi_0}$ it follows that $f(\|  x\|)<C-C_{\varphi_0}$, and hence $\|  x\|$ is bounded.
For every bounded subset $B\subset U$ we have that $B\subset U\setminus U_C$, where $C := \sup_{B}\varphi_0$.

To show that $U_C$ is contractible, note that $U_C$ satisfies the same properties which were used in order to show that unbounded bordered sets are contractible: It is open, if $  x\in U_C$, then so is $  x+s  v$ for every $s>0$, and for all $  x\in \RR^n$ we have $  x+s  v\in U_C$ for all $s$ large enough.
We can now show that $U_C$ is contractible by the same arguments which were used before to show that $U$ is contractible.
\end{proof}

\begin{proof}[Proof of Corollary \emph{\ref{cor: bordered are trivial}}]
If $U$ is bounded, then its boundedly supported cohomologies are equal to its standard de Rham cohomology, and the result follows since $U$ is contractible. Assume that $U$ is unbounded.
Let $\omega\in \Omega^k_{\rm bs}(U)$ with $d\omega = 0$. 
Let $V\subset U$ be a relatively closed bounded set such that $\supp \omega\subset V$ and $U\setminus V$ is contractible. 
Since $U$ is contractible, there is $\alpha\in \Omega^{k-1}$ such that $d\alpha = \omega$. 
The $(k-1)$-form $\alpha$ is not necessarily boundedly supported, hence it does not follow that $\omega$ vanishes in $H_{\rm bs}^k$. To prove that $\omega$ vanishes in $H_{\rm bs}^k$ we replace $\alpha$ with a boundedly supported alternative.
Note that $d\alpha|_{U\setminus V}=0$. 
We distinguish between two cases: $k=1$ and $k>1$. 

If $k=1$, then since $U\setminus V$ is contractible, $\alpha|_{U\setminus V}\equiv c$ is a constant. 
The $0$-form $\alpha'=\alpha-c$ is boundedly supported and satisfies $d(\alpha - c) = \omega$, as desired.

If $k>1$, then $H^{k-1}(U\setminus V) = 0$, and hence there is $\beta\in \Omega^{k-2}(U\setminus V)$ such that $d\beta = \alpha|_{U\setminus V}$. 
Pick an open bounded set $W \subset U$ that contains $V$, and consider a smooth partition of unity subordinate to the cover $U\setminus V, W$ of $U$, namely, $\varrho, 1-\varrho: U\to \RR$ such that $\supp \varrho\subset U\setminus V$ and $\supp (1-\varrho)\subset W$. 
Now, $\varrho\beta|_{U\setminus W} = \beta|_{U\setminus W}$ and we can extend $\varrho\beta$ to a form on $U$, namely, $\widetilde{\varrho\beta}\in \Omega^{k-2}(U)$. 
The form $\alpha' = \alpha - d\widetilde{\varrho\beta}$ is boundedly supported, since it coincides with $\alpha - d\beta$ on $U\setminus W$. 
It follows that $\omega = d\alpha'$, as desired.
\end{proof}

\section{Proof of Theorem \emph{\ref{thm: divergence classification}}} \label{sec: proof of divergence classification}

In order to prove Theorem \ref{thm: divergence classification}, in this section we show that the cover constructed in \S \ref{sec: the cover} (of the subgroup $A$, up to a compact set) has a finite subcover. %The proof relies on understanding the geometrical properties of the cover studied in Section \ref{sub: covering_theorem}. 

Recall the definition of the sets $U^{Ag}_{\fv}$ and $U^{Ag}_{\fv,f}$, for $\fv\in \calr$, provided in \S \ref{sec: the cover}. Let $f$ satisfy the conclusion of Proposition \ref{prop: cover can be f thinner} 

\begin{prop}\label{prop: finite cover}
    Let $A\pi(g)$ be a divergent trajectory and assume $\dim A=\rank_{\QQ}G$. Then, there exist $\fv_1,\dots,\fv_ m\in\calr$ such that $A\setminus\bigcup_{i=1}^{ m} \overline{U^{Ag}_{\fv_i, f}}$ is bounded. 
\end{prop}

Assuming Proposition \ref{prop: finite cover}, Theorem \ref{thm: divergence classification} follows from Observation \ref{observation: limit of elements in U doubletilde}. Thus, the rest of this section is devoted to the proof of Proposition \ref{prop: finite cover}. 

\begin{proof}[Proof of Proposition \emph{\ref{prop: finite cover}}]
Fix a divergent trajectory $A\pi(g)$, and let $f$ be as in Proposition \ref{prop: cover can be f thinner}.
Consider the collection of sets \[\fU^{Ag}_f := \{U^{Ag}_{\fv}\cap U^{Ag}_{\fv, f}:\fv\in \calr\}.\] By Proposition \ref{prop: compactness criterion} and the definition of $U_{\fv}$ (see Eq. \eqref{eq: cover defn}) we see that, up to a compact set, the sets $U_{\fv}$ cover $A$. 
By Proposition \ref{prop: cover can be f thinner}, $\fU^{Ag}_f$ also cover $A$ up to a compact set. 
Let $U_0\subset A$ be a bounded open set such that $\bigcup_{\fv\in \calr} \left(U^{Ag}_{\fv}\cap U^{Ag}_{\fv,f}\right) \cup U_0 = A$. 
Denote \[\fU^{Ag}_{f+} := \fU^{Ag}_f \cup \{U_0\}.\] Then, $\fU^{Ag}_{f+}$ is a cover of $A$. 

We wish to apply Theorem \ref{thm: covering_bordered} to the cover $\fU^{Ag}_{f+}$ of $A$. We need to construct the sets $E(U)$ for $U\in \fU^{Ag}_{f+}$. 

For any $U_{\fv}^{Ag}\in \fU_0^{Ag}$ define $E(U_{\fv}^{Ag}) := U_{\fv, f}^{Ag}$ and take for $E(U_0)$ an arbitrary bounded bordered set containing $U_0$. 
Note that the cover $\{E(U): U\in \fU^{Ag}_{f+}\}$ is locally finite by Proposition \ref{prop: cover locally finite and char lin indp}.
To apply Theorem \ref{thm: covering_bordered} we need to make sure that the sets $E(U)$ are bordered. 
Indeed, they are constructed to be $(\Psi,f)$-bordered in Proposition \ref{prop: cover can be f thinner} for some nondecreasing monotone unbounded function $f$, which may be chosen to be $\varepsilon = \varepsilon(\Psi)$-Lipschitz as in Theorem \ref{thm: bordered properties}, by Observation \ref{obs: f Lipschitz}.
% Replace $f$ by a pointwise smaller function which is nondecreasing monotone unbounded and $\varepsilon$-Lipschitz, as allowed by Remark \ref{rmk: f Lipschitz}. %Both Proposition \ref{prop: cover locally finite and char lin indp} and Observation \ref{observation: limit of elements in U doubletilde} remain true for $f'$ instead of $f$, and hence we may assume that $f$ itself is $\varepsilon$-Lipschitz.
Now we are allowed to apply Theorem \ref{thm: covering_bordered} to the cover $\fU^{Ag}_{f+}$. 

Denote by $\fv_1,...,\fv_m\in \calr$ the different Lie algebras for which $U_{\fv, f}^{Ag}$ intersects $E(U_0)$. There are only finitely many of those since $\{E(U): U\in \fU^{Ag}_{f+}\}$ is locally finite. 

We claim that $A\setminus\bigcup_{i=1}^{m} \overline{U^{Ag}_{\fv_i, f}}$ is bounded, and more precisely, that 
$\overline{E(U_0)}\cup \bigcup_{i=1}^{m} \overline{U^{Ag}_{\fv_i, f}} = A$. 
Denote by $r$ the dimension of $A$, (which is assumed to be equal to $\rank_{\QQ}G$). Let $x\in A\setminus \overline{E(U_0)}$. 
By Theorem \ref{thm: covering_bordered} there exist different sets $U_1,...,U_{r+1}\in \fU^{Ag}_{f+}$ 
such that $U_1\cap ...\cap U_{r+1}\neq \emptyset$, and $x\in \overline{E(U_1)}$. 
By Lemma \ref{lem: no large intersection} at most $r$ sets of $U_1,...,U_{r+1}$ are in $\fU^{Ag}_{f}$, and hence for some $1\le i\le r$ we have $U_i=U_0$. Since $x\in \overline {U_1}$ and $x\nin \overline {E(U_0)}$, if follows that $U_1\neq U_0$, and hence $U_1 = U^{Ag}_{\fv, f}$ for some $\fv\in \calr$. 
Since $U_1\cap U_0\neq \emptyset$ it follows that $\fv = \fv_i$ for some $1\le i \le m$. The result follows.  
\end{proof}

\begin{remark}\label{rem: Barak's proof}
Similar arguments to the ones used in the proof of Proposition \ref{prop: finite cover} above yield an alternative proof to Weiss' result \cite[Corollary 2]{Weiss2} that for any $\RR$-diagonalizable subgroup $A$ which satisfies $\dim A>\rank_\QQ G$, there are no divergent trajectories in $G/\Gamma$. Explicitly, if $\ell:=\dim A>\rank_\QQ G=: r$ and $A\pi(g)$ is a divergent trajectory, then by Theorem \ref{thm: covering_bordered}, a covering $\fU^{Ag}_{f+}$  of $\operatorname{Lie}(A)$ defined as in the above, contains $\ell+1$ sets with a non-empty intersection. This is a contradiction to 
Lemma \ref{lem: no large intersection}. 
\end{remark}

\section{Avoiding compact sets} % (fold)
\label{sec: proof_of_theorem_thm: intersection}
We begin with a topological manipulation, showing that we may replace the deformation retract with the image of a compact set $Y$ which is homotopy equivalent to $G/\Gamma$. 
We then use the homotopy to push an orbit $Ax$ away from a much bigger compact set in $G/\Gamma$. 
In doing so, the set $Ax$ is pushed to a set which is no longer an actual orbit,  instead it is of the form $\{f(a)ax: a\in A\}$ for some bounded correspondence $f: A\to G$. 
Finally, we use the data of the location of the orbit to attain a cover which contradicts Corollary \ref{cor: R_n convex cover}.

\subsection{Getting a Compact Subset} % (fold)
\label{sub: getting_a_compact_subset}
In this section replace an arbitrary deformation retract of $G/\Gamma$ with a compact set which is the image of a homotopy equivalence map to $G/\Gamma$.

\begin{thm}[\cite{Raghu}, \cite{BS}]
\label{thm: there is compact deformation retract}
There exists a compact deformation retract to $G/\Gamma$. 
\end{thm}

It is hard to trace the origin of Theorem \ref{thm: there is compact deformation retract}. It follows from Raghunathan \cite{Raghu} using gradient flow on the function constructed in the theorem on page 328. It can also be derived from \cite{BS}, proving the simplicity of the space $K\backslash G / \Gamma$. Both \cite{gr2} and \cite{saper} used \cite{BS} to construct compact retracts to $K\backslash G / \Gamma$, and their methods are able to construct compact retracts to $G / \Gamma$. Here $K\subset G$ is a maximal compact subgroup.

% \begin{lem}\label{lem: exists compact subset h-equiv}
% There exists a compact deformation retract $Y\subset G/\Gamma$ which satisfies the following. 
% For every deformation retract of $Z\subset G/\Gamma$ there exists a homotopy equivalence $f: Y\to G/\Gamma$
% % which is homotopy equivalent to $G/\Gamma$ with the maps $\xymatrix{S\ar@/^/[r]^f&G/\Gamma\ar@/^/[l]^g}$ 
% such that ${\rm Im}(f)\subset Z$.
% \end{lem}
\begin{lem}\label{lem: exists compact subset h-equiv}
For every deformation retract of $Z\subset G/\Gamma$ there exists a homotopy equivalence $f: Y\to G/\Gamma$ from a compact space $Y$,
% which is homotopy equivalent to $G/\Gamma$ with the maps $\xymatrix{S\ar@/^/[r]^f&G/\Gamma\ar@/^/[l]^g}$ 
such that ${\rm Im}(f)\subset Z$.
\end{lem}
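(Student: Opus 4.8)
The plan is to combine the hypothesis with Theorem \ref{thm:there is compact deformation retract}. The latter supplies a \emph{compact} deformation retract of $G/\Gamma$, and the deformation retraction onto $Z$ can be used to push this compact model into $Z$ without changing its homotopy type; the composite of the inclusion of the compact model with the retraction onto $Z$ will be the desired map $f$. (We read the target of $f$ as $G/\Gamma$, so that the condition ${\rm Im}(f)\subseteq Z$ makes sense.)

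First I would unwind the hypothesis. Since $Z\subseteq G/\Gamma$ is a deformation retract, there is a homotopy $F:(G/\Gamma)\times[0,1]\to G/\Gamma$ with $F(\cdot,0)=\mathrm{id}_{G/\Gamma}$ and, writing $r:=F(\cdot,1):G/\Gamma\to G/\Gamma$, we have $r(G/\Gamma)\subseteq Z$ and $r|_{Z}=\mathrm{id}_{Z}$. In particular $F$ is a homotopy from $\mathrm{id}_{G/\Gamma}$ to $r$, so $r$ is a homotopy equivalence. Likewise, applying Theorem \ref{thm:there is compact deformation retract} we obtain a compact subspace $S\subseteq G/\Gamma$ which is a deformation retract, hence whose inclusion $\iota:S\hookrightarrow G/\Gamma$ is a homotopy equivalence.

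Now set $f:=r\circ\iota:S\to G/\Gamma$. Then $S$ is compact and ${\rm Im}(f)=r(\iota(S))\subseteq r(G/\Gamma)\subseteq Z$. It remains to see that $f$ is a homotopy equivalence: the map $F\circ(\iota\times\mathrm{id}_{[0,1]})$ is a homotopy from $\iota$ to $r\circ\iota=f$, so $f\simeq\iota$, and since $\iota$ is a homotopy equivalence so is $f$. This proves the lemma.

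I do not anticipate any genuine obstacle. The only non-formal input is the existence of a compact deformation retract, i.e.\ Theorem \ref{thm:there is compact deformation retract}, which is quoted; everything else uses only the standard facts that a retraction onto a deformation retract is homotopic to the identity and that homotopy equivalences are closed under composition. Should a later argument require more of $S$ (for instance a finite $CW$ structure), one could instead take $S$ to be one of the explicit compact cores of Grayson or Saper recalled in the introduction, but the statement as phrased needs nothing beyond compactness.
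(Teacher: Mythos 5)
Your proof is correct and is essentially identical to the paper's: both take the compact deformation retract $S$ supplied by Theorem \ref{thm:there is compact deformation retract} and set $f = i_Z\circ r_Z\circ i_S$ (your $r\circ\iota$), a composite of homotopy equivalences. You also correctly resolve the typo in the statement, reading the target of $f$ as $G/\Gamma$ rather than $G$.
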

\begin{remark}
The compact space $Y$ in Lemma \ref{lem: exists compact subset h-equiv} is independent of $Z$. Moreover, it can be taken to be any compact deformation retract of $G/\Gamma$. Also, note that the map $f: Y\to G/\Gamma$ is not the inclusion map.
\end{remark}
\begin{proof}
Let $Y\subset G/\Gamma$ satisfy the conclusion of Theorem \ref{thm: there is compact deformation retract}, i.e., be a compact deformation retract of $G/\Gamma$. Denote by $i_Y: Y\to G/\Gamma$ the inclusion map. Since $Y$ is a deformation retract, $i_Y$ is a homotopy equivalence.

Let \[\xymatrix{Z\ar@/^/[r]^{i_Z}&G/\Gamma\ar@/^/[l]^{r_Z}}\] denote the corresponding inclusion and retraction maps. Since $Z$ is a deformation retract, both $i_Z$ and $r_Z$ are homotopy equivalences. It follows that $i_Z\circ r_Z\circ i_{Y}: Y\to G/\Gamma$ is the desired homotopy equivalence.
%  the retraction with ${r_Z}\circ {i_Z} = {\rm Id}_Z$ and ${i_Z}\circ {r_Z} \stackrel{h}{\sim} {\rm Id}_Z$. Since $S$ is also a deformation retract, we have its retraction
% $\xymatrix{S\ar@/^/[r]^{i_{S}}&G/\Gamma\ar@/^/[l]^{r_{S}}}$ with 
% ${r_{S}}\circ {i_{S}} = {\rm Id}_{S}$ and ${i_{S}}\circ {r_{S}} \stackrel{h}{\sim} {\rm Id}_{S}$.
% Now $\xymatrix{S\ar@/^/[r]^{i_Z\circ r_Z\circ i_{S}}&G/\Gamma\ar@/^/[l]^{r_{S} \circ i_Z \circ r_Z}}$ is the desired homotopy equivalence. 
\end{proof}
% subsection getting_a_compact_subset (end)
\subsection{Attaining a Correspondence} % (fold)
In this section, we define the notion of correspondences between manifolds and construct the correspondences in our place of interest. The notion of correspondence mimics a similar notion in algebraic geometry. However, since the category of manifolds has no fibered products (as opposed to schemes), the definitions here use the Transversality Theorem \cite[Chapter 2 \S3]{gui} to avoid problematic fiber products. Although the definition is natural, we could not find it in the literature in this simplicity. 

We believe that one can replace the category of manifolds with the category of \emph{derived manifolds}, see \cite{spivak}. This category includes all manifolds and has fibered products, and requires understanding of higher categories. 
Another possible approach is to mimic techniques used for symplectic manifold, see \cite{Weinstein}. In this setting, one encounters a similar transversality issue, and several methods are utilized to overcome it. 
In order to keep this manuscript accessible to readers without additional background, we decided to stick with the classical approach and use the following construction.

% \tcr{We can use derived manifolds, as in https://arxiv.org/pdf/0810.5174.pdf, however, no one will understand the reference. This may make things slightly simpler, as composition will be obvious. I need to check this formulation.}

\label{sub: attaining_a_correspondence}
\begin{de}[Correspondences]
\label{def: correspondence}
	Let $X,Y$ be orientable manifolds without boundary. A {\em correspondence} $f: X\to Y$ is a function-like object specified by the data 
	\[\xymatrix{M\ar[r]^{\xi}\ar[d]^\tau&Y\\X}\]
	where the map $\tau$ is equidimensional and proper, and $M$ is a boundaryless orientable manifold. 
	The {\em multiplicity} of the correspondence is the degree of $\tau$.
	For every $x\in X$ define the image $f(x)\subset Y$ as the set $\xi(\tau^{-1}(x))$. 
	% a formal combination of points in $M$ obtained by $\sum_{y\in \pi^{-1}(x)}i_y(\pi)\tilde f(z)$ where $i_y(\pi)$ is the degree of the map $\pi$ at $y$. 
	% Let $\xymatrix{X\ar[r]^f& Y\ar[r]^{f'}&Z}$ be correspondences An {\em$\varepsilon$-composition} $f'\circ f$ is a correspondence $g: X\to Z$ such that for every $x\in X$ the formal combinations $g(x)$ and $f'(f(x))$ are $\varepsilon$ close, that is, there are points $y_i, z_i\in Z$ and coefficients $a_i\in \ZZ$ such that $y_i, z_i$ are $\varepsilon$-close and 
	% \[\sum_i a_iy_i = f'(f(x)), ~~\sum_i a_iz_i = g(x)\]
	% We will see that the composition of correspondences exists, and we will denote it by $f'\circ_\varepsilon f$. It is unique up to $2\varepsilon$-closeness.
\end{de}
\begin{discussion}[Composition is not straightforward]\label{rem: composition}
One would like to consider the composition 
$\xymatrix{X\ar[r]^{f_1}& Y\ar[r]^{f_2}&Z}$ with \[f_1=(M_1, \tau_1, \xi_1),\quad f_2 = (M_2, \tau_2, \xi_2).\]
To this end, it would be natural to take the fibered product 
$M_1\times_YM_2$ 
and include it in the diagram
\[\xymatrix{M_1\times_YM_2\ar@{-->}[r]\ar@{-->}[d]&M_2\ar[r]^{\xi_2}\ar[d]^{\tau_2}&Z\\
M_1\ar[r]^{\xi_1}\ar[d]^{\tau_1}&Y\\
X}\]
Unfortunately, $M_1\times_Y M_2$ may not be a manifold, as required by the definition of a correspondence. 
To deal with this issue we need $\xi_1$ and $\tau_2$ to be transverse, a property we obtain by applying the Transversality Theorem. 
Since neither $\xi_1$ nor $\tau_2$ is an immersion, the Transversality Theorem cannot be applied directly, which leads us to the following definition.
\end{discussion}
\begin{de}[Composition]
\label{de: composition}
Let $\xymatrix{X\ar[r]^{f_1}& Y\ar[r]^{f_2}&Z}$ be two correspondences with the correspondence data
\[f_1=(M_1, \tau_1, \xi_1),\quad\text{and}\quad f_2 = (M_2, \tau_2, \xi_2).\]
Factor $\xi_1: M_1\to Y$ as the composition of an immersion and a product map
\[\xymatrix{M_1 \ar[r]^(.4){\tilde \xi_1} & M_1\times Y\ar[r]& Y}\]
and extend the diagram accordingly:
\begin{align}\label{eq: big diagram}
\xymatrix{M_1\times_YM_2\ar@{-->}[r]\ar@{-->}[d]&M_1\times M_2\ar[r]\ar[d]^{{\rm Id}_{M_1}\times \tau_2}&M_2\ar[r]^{\xi_2}\ar[d]^{\tau_2}&Z\\
M_1\ar[r]^(.45){\tilde \xi_1}\ar[d]^{\tau_1}&M_1\times Y\ar[r]	&Y\\
X}
\end{align}
We will now forget about the $M_2\to Y$ column and define $\tilde \xi_2: M_1\times M_2\to Z$ as the composition of the projection $M_1\times M_2\to M_2$ and $\xi_2$:
\begin{align*}
\xymatrix{M_1\times_YM_2\ar@{-->}[r]\ar@{-->}[d]&M_1\times M_2\ar[r]^(.65){\tilde \xi_2}\ar[d]^{{\rm Id}_{M_1}\times \tau_2}&Z\\
M_1\ar[r]^(.45){\tilde \xi_1}\ar[d]^{\tau_1}&M_1\times Y\\
X}
\end{align*}
By the Transversality Theorem, there exists a proper homotopy between ${\rm Id}_{M_1}\times \tau_2$ and some map $\rho: M_1\times M_2\to M_1\times Y$ which is transverse to $\tilde \xi_1$. Explicitly, there exists a proper map $h:(M_1\times M_2)\times[0,1]$ such that $h(-, 0) = {\rm Id}_{M_1}\times \tau_2$ and $h(-,1)=\rho$ is transverse to $\tilde \xi_1$.

Construct $M_3=M_1\times_{M_1\times Y}(M_1\times M_2)$ to be the fibered product with the diagram 
\begin{align}\label{eq: M3 product diagram}
\xymatrix{M_3\ar[r]\ar[d]&M_1\times M_2\ar[d]^{\rho}\\
M_1\ar[r]^{\tilde \xi_1}&M_1\times Y	}.
\end{align}
The transversality of $\rho$ and $\tilde \xi_1$ implies that $M_3$ is a manifold. Define $\xi_3$ to be the composition of the projection $M_3\to M_1\times M_2$ and $\tilde \xi_2$. 
Define $\tau_3$ to be the composition of the projection $M_3\to M_1$ and $\tau_1$. 
The construction guarantees that each of the maps going down in Diagrams \eqref{eq: big diagram} and \eqref{eq: M3 product diagram} are proper and have domain and codomain of equal dimensions. Hence $f_3 = (M_3, \tau_3, \xi_3)$ is a correspondence $f_3: X\to Z$. 

The data of $f_3$ and $h$ is called {\em composition data} and $f_3$ is the composition of the correspondences $f_1$ and $f_2$.

Consider the correspondences \begin{align*}
\tilde f_1 &:= (M_1, \tau_1, \tilde \xi_1): X\to M_1\times Y,\\
\tilde f_{2, t} &:= (M_1\times M_2, h_t, \tilde \xi_2): M_1\times Y\to Z.
\end{align*}
Let $U\subset X\times Z$ be such that 
% which contains the image of $M_1\times_YM_2$ under $\tau_1\times \xi_2$, that is, 
for every $x\in X, z\in f_2(f_1(x))$ we have $(x,z) \in U$. 
We say that the composition data $(f_3, h)$ is {\em $U$-compatible} if for every $x\in X, t\in [0,1], z\in \tilde f_{2,t}(\tilde f_1(x))$ we have $(x,z)\in U$. 
\end{de}
\begin{remark}[Multiplicity of composition]
Let $\xymatrix{X\ar[r]^{f_1}& Y\ar[r]^{f_2}&Z}$
be two correspondences and $f_3$ a composition correspondence. Then the multiplicity of $f_3$ is the product of the multiplicities of $f_1$  and $f_2$. 
\end{remark}
\begin{remark}[Categorification of correspondences]
The composition of two correspondences is not uniquely defined. Such a phenomenon is common with $\infty$-categories. We believe one can construct an $\infty$-category of manifolds with correspondences. 
\end{remark}
\begin{lem}
In the composition setting as above, $\xymatrix{X\ar[r]^{f_1}& Y\ar[r]^{f_2}&Z,}$ let $U\subset X\times Z$
 be an open set such that for every $x\in X, z\in f_2(f_1(x))$ we have $(x,z) \in U$. 
Then there exists a $U$-compatible composition. 
\end{lem}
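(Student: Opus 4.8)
The plan is to show that the composition data $(f_3,h)$ produced by Definition \ref{de: composition} can be arranged to be $U$-compatible by choosing the transversality homotopy $h$ carefully, exploiting that $U$ is open and that all the relevant spaces are built from compact or proper pieces. First I would observe that the map $t\mapsto \tilde f_{2,t}(\tilde f_1(x))$ is a homotopy of correspondences starting at $\tilde f_2 = \tilde f_{2,0}$, so at $t=0$ the pair $(x,z)$ with $z\in \tilde f_{2,0}(\tilde f_1(x)) = f_2(f_1(x))$ already lies in $U$ by hypothesis. Thus the ``bad set''
\[
Z_{\mathrm{bad}} := \{(x,t,z) : x\in X,\ t\in[0,1],\ z\in \tilde f_{2,t}(\tilde f_1(x)),\ (x,z)\notin U\}
\]
is disjoint from the slice $t=0$. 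The goal is to choose $h$ so that $Z_{\mathrm{bad}}=\emptyset$.

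Next I would set up the transversality argument so that the homotopy $h$ is supported in a prescribed region. Concretely, the fibered product $M_1\times_{M_1\times Y}(M_1\times M_2)$ computed from $\varrho = h_1$, together with the images of $\tilde\xi_2$, determines $\tilde f_{2,t}(\tilde f_1(x))$; by properness of $\tau_1$, $\tau_2$ (hence of $\mathrm{Id}_{M_1}\times\tau_2$) and of the structure maps, for each compact $K\subseteq X$ the set of $z$ that can arise as $t$ ranges over $[0,1]$ and $x$ over $K$ lies in a compact subset of $Z$ when we also keep $h$ inside a fixed compact $C^0$-neighborhood of $\mathrm{Id}_{M_1}\times\tau_2$. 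The key point is that the Transversality Theorem (the relative version: one may keep a map fixed where it is already transverse, and more importantly one may make the homotopy arbitrarily $C^0$-small on compacta) lets us demand that $h$ stays within any prescribed open neighborhood of $\mathrm{Id}_{M_1}\times \tau_2$ in the space of maps $M_1\times M_2\to M_1\times Y$, since $\mathrm{Id}_{M_1}\times\tau_2$ is already a submersion onto its image in the $M_1$-direction and only the $Y$-coordinate needs perturbing. Because $U$ is open and contains every $(x,z)$ with $z\in f_2(f_1(x)) = \tilde f_{2,0}(\tilde f_1(x))$, there is (by the tube lemma applied fiberwise over the proper map $\tau_1$, or by a partition-of-unity/exhaustion argument over $X$) an open neighborhood $\mathcal N$ of the constant homotopy in the mapping space such that whenever $h$ takes values in $\mathcal N$, every $(x,z)$ with $z\in \tilde f_{2,t}(\tilde f_1(x))$ for some $t$ still lies in $U$. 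Applying the Transversality Theorem inside $\mathcal N$ yields the desired $U$-compatible $h$, and the induced $f_3 = (M_3,\tau_3,\xi_3)$ is then automatically $U$-compatible as well since its image is contained in $\bigcup_t \tilde f_{2,t}(\tilde f_1(X))$.

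The main obstacle I anticipate is making the ``openness of $U$ propagates through the homotopy'' step precise when $X$ is noncompact: one cannot simply invoke compactness to get a uniform size for the allowed perturbation. The remedy is to use the properness of $\tau_1$ together with an exhaustion $X = \bigcup_n K_n$ by compacta: over each $K_n$ the preimages under the proper maps are compact, so a finite $\varepsilon_n$ works there, and one patches the $\varepsilon_n$ into a continuous positive function on $X$ using a partition of unity, phrasing the transversality perturbation as a section of a bundle with a variable bound — this is exactly the form in which the parametric/relative Transversality Theorem is usually stated, so no new idea is needed beyond careful bookkeeping. A secondary technical point is checking that $M_3$ really is a boundaryless orientable manifold and $\tau_3$ proper of the correct degree, but this is already part of the construction in Definition \ref{de: composition} and is unaffected by which admissible $h$ we pick.
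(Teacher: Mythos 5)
Your proposal is correct and takes essentially the same route as the paper. The paper's own argument proceeds by reformulating the $U$-compatibility constraint as the requirement that the homotopy $h_t$ avoid a certain closed set $V\subseteq (M_1\times M_2)\times(M_1\times Y)$ (closedness comes, as you implicitly use, from properness of $\tau_1$ making $\tilde f_1$ take closed sets to closed sets), notes that the initial map $\mathrm{Id}_{M_1}\times\tau_2$ already avoids $V$, and then remarks that the proof of the Transversality Theorem lets one impose this open condition on $h_t$. This is the same core idea as yours: you phrase the open condition in terms of $U$ itself and a $C^0$-neighborhood of the constant homotopy, and you spell out the exhaustion-by-compacta/partition-of-unity bookkeeping needed when $X$ is noncompact; the paper compresses this into the observation that $\tilde f_1$ is closed and that the transversality perturbation can respect an open condition. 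The only slight advantage of the paper's phrasing is that by passing to the complementary closed set $V$ it packages the noncompactness issue once and for all into the statement ``avoid a closed set, starting outside it,'' whereas you carry the exhaustion argument explicitly; both are sound and the substance is identical.
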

\begin{proof}
Using the notation of Definition \ref{de: composition}, the $U$-compatibility condition is equivalent to: for all $m_{12}\in M_1\times M_2$, $x\in X$, and $t\in [0,1]$ 
\[ (x,\tilde \xi_2(m_{12}))\nin U\implies h_t(m_{12})\nin \tilde f_1(x).\]
This is equivalent to 
\[\forall m_{12}\in M_1\times M_2, t\in [0,1],~~ h_t(m_{12})\nin\tilde f_1(X\times \{\tilde \xi_2(m_{12})\} \setminus U).\]
Here we identify $X\times \{\tilde \xi_2(m_{12})\} \setminus U$ with the corresponding subset of $X$.
Note that $\tilde f_1$ maps closed sets to closed sets. Indeed, $\tau_1^{-1}$ maps closed sets to closed sets as it is the inverse of a continuous map and so does $\tilde \xi_1$ since it is a graph map. 

Consequently, there is a closed set $V\subset (M_1\times M_2) \times (M_1\times Y)$ such that 
$(m_{12}, ({\rm Id}_{M_1}\times \tau)(m_{12}))\nin V$, and our goal is find a proper homotopy $h_t$ between ${\rm Id}_{M_1}\times \tau$ and a map that is transverse to $\tilde \xi_1$, and such that for all $t\in [0,1]$ we have $(m_{12}, h_t(m_{12}))\nin V$. 

Examining the construction in the Transversality Theorem \cite[Chapter 2 \S3]{gui} we see that we can add an open condition on $h_t$. 
\end{proof}

Fix a compact set $Y$ and a homotopy equivalence $\xymatrix{Y\ar@/^/[r]^{\xi_1}&G/\Gamma\ar@/^/[l]^{\xi_2}}$ as in Lemma \ref{lem: exists compact subset h-equiv}. 
Denote by $h(-, t):\xi_1\circ \xi_2\sim {\rm Id}_{G/\Gamma}$ the homotopy and let $Y_0={\rm Im}(\xi_1)\subset G/\Gamma$.

\begin{lem}\label{lem:identity homotopy}
There is a unique continuous map $\tilde h: G/\Gamma\times[0,1]\to G$ such that $\tilde h(x,0) = e$ (where $e$ is the identity element in $G$) for every $x\in G/\Gamma$ and for every fixed $t\in [0,1]$ we have $h(x, t) = \tilde h(x, t)x$. 
\end{lem}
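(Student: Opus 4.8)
The statement asserts that the homotopy $h : G/\Gamma \times [0,1] \to G/\Gamma$, which satisfies $h(x,0) = x$, can be ``lifted'' to a $G$-valued map $\tilde h$ recording the group element moving $x$ along the homotopy: $h(x,t) = \tilde h(x,t)\,x$, with $\tilde h(x,0) = I$. The plan is to define $\tilde h$ fiber-by-fiber over $G/\Gamma$ and then check continuity. The key point is that $G \to G/\Gamma$ is a principal $\Gamma$-bundle (indeed a covering map, since $\Gamma$ is discrete), so a path in $G/\Gamma$ starting at a given point lifts uniquely to a path in $G$ once the starting point of the lift is fixed.

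First I would fix $x \in G/\Gamma$ and choose any $g \in G$ with $\pi(g) = x$. The path $t \mapsto h(x,t)$ in $G/\Gamma$ lifts, by the path-lifting property of the covering $\pi : G \to G/\Gamma$, to a unique path $t \mapsto \gamma_x(t)$ in $G$ with $\gamma_x(0) = g$ and $\pi(\gamma_x(t)) = h(x,t)$. Then $\gamma_x(t) g^{-1}$ is independent of the choice of $g$ (replacing $g$ by $g\delta$ with $\delta \in \Gamma$ replaces $\gamma_x$ by $\gamma_x\delta$, and the products $\gamma_x(t)\delta\delta^{-1}g^{-1}$ agree), so we may set $\tilde h(x,t) := \gamma_x(t) g^{-1}$. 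By construction $\tilde h(x,0) = g g^{-1} = I$ and $\tilde h(x,t)\, x = \pi(\gamma_x(t) g^{-1} g) = \pi(\gamma_x(t)) = h(x,t)$, and uniqueness of $\tilde h$ follows from uniqueness of path lifts with a prescribed initial point.

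The remaining step, and the one requiring the most care, is \emph{joint} continuity of $\tilde h$ in $(x,t)$. Here I would invoke the standard fact that lifts of homotopies through covering maps are continuous: the composite map $H : G/\Gamma \times [0,1] \to G/\Gamma$, $H = h$, together with the continuous ``initial lift'' $x \mapsto$ (a local section of $\pi$ near $x$), lifts to a continuous map by the homotopy lifting property of the covering $\pi$ applied on small product neighborhoods $V \times [0,1]$ where a continuous section $s : V \to G$ of $\pi$ exists; on such a neighborhood one gets a continuous lift $\tilde H_V : V \times [0,1] \to G$ with $\tilde H_V(x,0) = s(x)$, and then $\tilde h(x,t) = \tilde H_V(x,t) s(x)^{-1}$ is continuous on $V \times [0,1]$. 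These local definitions agree on overlaps by the uniqueness established above (two continuous lifts of the same path agreeing at $t=0$ agree everywhere), so they glue to a globally continuous $\tilde h$. Finiteness is not at issue; the only subtlety is that one must use the homotopy lifting property rather than merely path lifting to get continuity transverse to the $t$-direction, and that a continuous local section of $\pi$ exists because $\pi$ is a covering map (local homeomorphism). This completes the construction.
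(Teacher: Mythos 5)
Your proof is correct and uses the same idea as the paper, which simply notes that the lemma follows from $G \to G/\Gamma$ being a covering map; you have just spelled out the standard path-lifting/homotopy-lifting argument that the paper leaves implicit.
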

\begin{proof}
This follows immediately from the fact that $G$ is a covering space of $G/\Gamma$. 
\end{proof}

\begin{lem}\label{lem: far correspondence}
For every compact set $K\subset G/\Gamma$ there exists a bounded correspondence $\phi_K: G/\Gamma\setminus Y_0 \to G$ of multiplicity $1$ such that for every $x\in G/\Gamma\setminus Y_0$ we have $\left(\phi_K(x)\cdot x \right)\cap K = \emptyset$.
\end{lem}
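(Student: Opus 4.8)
\textbf{Proof proposal for Lemma \ref{lem: far correspondence}.}
The plan is to take the compact retract homotopy $h$ of $G/\Gamma$ that collapses the space onto $S_0$, lift it to a $G$-valued cocycle $\tilde h$ using the previous lemma, and ``run it far enough''. Since $K$ is compact and disjoint issues only occur near $S_0$ after the retraction is complete, the idea is: after fully retracting (i.e.\ at time $t=1$) the point $h(\Lambda,1)=\tilde h(\Lambda,1)\Lambda$ lies in $S_0$, and then we further translate by group elements flowing towards the cusp, away from $K$. The subtlety is that no single $g\in G$ works for all starting points $\Lambda\in G/\Gamma\setminus S_0$ simultaneously (this is precisely the phenomenon behind Theorem \ref{thm:intersection} failing for small $\dim A$), which is why we are forced to produce a \emph{correspondence} rather than an honest map: different sheets of the correspondence will carry different translating elements, patched together over an open cover of $G/\Gamma\setminus S_0$.

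First I would fix, for each point $\Lambda_0 \in S_0$, a one-parameter ``cusp-bound'' family: since $G/\Gamma$ is noncompact with finitely many cusps, one can choose finitely many elements $g_1,\dots,g_N\in G$ and a partition-of-unity-type data on $G/\Gamma$ so that for every $\Lambda$ there is some $g_i$ with $g_i\Lambda$ arbitrarily deep in the cusp; concretely, using the reduction theory of $\bG(\ZZ)$ one covers $G/\Gamma$ by finitely many Siegel-set neighborhoods of the cusps and, on each, translates by the torus of the corresponding rational parabolic. Then I would form the correspondence whose total space $\Gamma_{\phi_K}$ is the (disjoint-union-then-glued) manifold $\bigsqcup_i V_i$, where $V_i\subseteq G/\Gamma\setminus S_0$ is the $i$-th open set of this cover; the map $\tau$ is the inclusion $V_i\hookrightarrow G/\Gamma\setminus S_0$, arranged to have total degree $1$ by a standard orientation/partition argument (one can also simply take $\tau$ to be a degree-$1$ self-map supported appropriately, exactly as in the fibered-product constructions of Definition \ref{de: composition}); and the map $\xi$ sends a point of $V_i$ to $g_i(s)\cdot \tilde h(\Lambda,1)$ where $g_i(s)$ is chosen large enough (depending continuously on $\Lambda$ and on $K$) that $g_i(s)\tilde h(\Lambda,1)\Lambda\notin K$. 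Boundedness of $\phi_K$ is then the statement that the $g_i(s)$ can be taken from a compact subset of $G$ — which holds because $K$ is compact and $S_0$ is compact, so a uniform bound on how far one must push suffices.

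The main technical point — and I expect it to be the main obstacle — is making the degree-$1$ correspondence genuinely well-defined as a boundaryless orientable manifold mapping properly to $G/\Gamma\setminus S_0$, rather than just a heuristic ``multivalued map''. The clean way is: let $U$ be a relatively compact open neighborhood of $K$ in $G/\Gamma$; on $G/\Gamma\setminus \xi_1(\xi_2(\overline U))$ the retraction $h$ already pushes $\Lambda$ away from $K$ at time $1$ with no translation needed, so there $\phi_K\equiv \tilde h(\cdot,1)$ is a single-valued (degree $1$) map; on the remaining relatively compact piece one performs the cover-and-translate construction above, and glues the two over the overlap using that the translating elements $g_i(s)$ can be homotoped to the identity within the region where no translation is needed. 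This reduces the construction to a finite gluing of pieces each of which is manifestly a manifold, and the degree is $1$ because the ``far from $K$'' piece already has degree $1$ and the modifications are supported on a compact set and do not change the degree. I would then verify the required properties: $\tau$ proper and equidimensional (clear, as each sheet is an open subset mapped by inclusion or by a compactly-supported self-homotopy), orientability and boundarylessness (inherited from $G/\Gamma$), degree $1$ (from the far piece), boundedness (uniform compactness of the translating elements), and finally the defining property $\bigl(\phi_K(\Lambda)\cdot\Lambda\bigr)\cap K=\emptyset$ (by construction of the $g_i(s)$).
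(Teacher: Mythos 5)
Your approach has a genuine gap, and it takes a different route from the paper's. The paper's proof never retracts points all the way to $S_0$ and never uses cusp excursions. Instead, it fixes a relatively compact open $U$ with $K\subset U$ and manifold boundary $N=\partial U$, and builds the correspondence from the collar $M_0 = N\times[0,1]$ (with $\tau_0(x,t)=h(x,t)$, $\xi_0(x,t)=\tilde h(x,t)^{-1}$) glued along $N$ to $M_1 = G/\Gamma\setminus U$ (with $\tau_1=\mathrm{Id}$, $\xi_1\equiv I$), then restricts to $\tau'^{-1}(G/\Gamma\setminus S_0)$ to kill the remaining boundary $N\times\{1\}$. On the collar, $\xi(m)\tau(m)$ lands back in $N\subset G/\Gamma\setminus U$, and on $M_1$ it is already outside $U$; so the key idea is to \emph{reverse} the homotopy from $N$, rather than push $\Lambda$ forward. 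Degree $1$ falls out because any point of $G/\Gamma\setminus(\overline U\cup\tau_0(M_0))$ has exactly one preimage, in $M_1$.

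The gap in your proposal is in the ``clean way,'' which your degree argument relies on. You claim that on $G/\Gamma\setminus\xi_1(\xi_2(\overline U))$ the time-$1$ retraction $\tilde h(\cdot,1)$ already avoids $K$, so that the modifications are compactly supported. This is false in general: $\tilde h(\Lambda,1)\Lambda = (\xi_1\circ\xi_2)(\Lambda)\in S_0$, and since nothing forces $S_0\cap K=\emptyset$, what you actually need is $\Lambda\notin(\xi_1\circ\xi_2)^{-1}(K)$. But $(\xi_1\circ\xi_2)^{-1}(K\cap S_0)$ is typically \emph{noncompact} — a retraction onto a compact set collapses entire cusp rays onto single points of $S_0$, so the preimage of a point of $K\cap S_0$ goes to infinity. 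Hence there is no complement-of-a-compact-set on which ``no translation is needed,'' the modifications are not compactly supported, and the degree-$1$ argument does not go through. Your main approach (retract to $S_0$, then push into a cusp using reduction theory) also has unresolved issues: if $\tau$ is the inclusion of open sets $V_i\hookrightarrow G/\Gamma\setminus S_0$, it is not proper unless the $V_i$ are closed, the gluing that is supposed to yield a boundaryless oriented manifold is not carried out, and the degree computation hinges on the broken ``far piece'' argument. All of this machinery is also unnecessary once one adopts the paper's collar-and-reverse-homotopy idea, which uses only that $N$ lies outside $K$ and never needs to enter the cusp.
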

\begin{proof}
Let $U$ be an open set such that $K \subset U\subset G/\Gamma$ and $U$ has a compact closure and manifold boundary $N = \partial U$. 

Informally, the correspondence applies the homotopy $\tilde{h}$, which satisfies the conclusion of Lemma \ref{lem:identity homotopy}, to $N$. Then, for any point $v\in G/\Gamma$ we consider the set of points $B\subset N$ that hit $v$, and send $v$ to the transformation matrices from $v$ to $B$. 

Formally, let $M_0 := N\times [0,1]$ and $M_1 := G/\Gamma\setminus U$. Define $\tau_0: M_0\to G/\Gamma$ by $\tau_0(x, t) := h(x,t)$ and let $\tau_1: M_1\to G/\Gamma$ be the identity map. 
Note that $\tau_0$ is proper since it is a map on a compact set, and $\tau_1$ is proper since it is the identity map on a closed set.

Glue $M_0$ and $M_1$ at $N\times \{0\}\subset M_0$ and $N\subset M_1$ to $M' := M_0 \sqcup_N M_1$. 
Since $\tau_0$ and $\tau_1$ agree on $N$, we can glue them into a map $\tau': M'\to G/\Gamma$. 
Since we glued proper maps, $\tau'$ is proper. Denote $M = \tau'^{-1}(G/\Gamma\setminus Y_0)$. The boundary of $M'$ is $N\times \{1\}$. Note that $\tau(N\times \{1\}) = h(M,1) \subset Y_0$ and hence $M$ is boundaryless (yet not compact). Now define $\tau := \tau'|_{M}$.

Let $\xi_0: M_0 \to G$ be given by $\xi_0(x, t) := \tilde  h(x, t)^{-1}$ and let $\xi_1: M_1:\to G$ be the constant $\xi_1 \equiv I$. Glue $\xi_0$ and $\xi_1$ into a map $\xi: M\to G$. Note that $\xi_0$ has a bounded image, and hence so does $\xi$. 
By construction $\xi(m)\tau(m) \in G/\Gamma\setminus U$ for every $m\in M$. 

We would like to construct a correspondence from $M,\tau, \xi$. The problem is that $M$, $\tau$ and $\xi$ are continuous,  but not smooth. By its definition, $M$ admits a smooth orientable structure. 
Using the Whitney Approximation Theorem one can approximate $\xi$ and $\tau$ by a smooth $\xi_0: M\to G$ and a smooth proper $\tau_0: M\to G/\Gamma\setminus Y_0$ such that $\xi_0(m)\tau_0(m) \in G/\Gamma\setminus K$ for every $m\in M$.
We can choose the approximation so that $\deg \tau_0 = \deg \tau$ and $\xi_0$ is bounded.

The objects $M, \tau_0$ and $\xi_0$ define a bounded correspondence $\phi_K: G/\Gamma\setminus Y_0 \to G$ such that for every $x\in G/\Gamma$ and $n\in \phi_K(x)$ we have $n x\not \in K$.
To calculate the multiplicity $\mult \phi_K = \deg \tau$, take \[x \in G/\Gamma \setminus (\overline U \cup \tau_0(M_0)),\] which exists since $G/\Gamma$ is not compact and $\overline U \cup \tau_0(M_0)$ is compact. 
Then $\tau^{-1}(x) = \tau_1^{-1}(x) = \{x\}$. Therefore, $\tau$ is a bijection between $G/\Gamma \setminus (\overline U \cup \tau_0(M_0))$ and its inverse image under $\tau$, and hence $\deg \tau = 1$.
\end{proof}
The goal of the current and the preceding subsections is to prove the following corollary.
\begin{cor}\label{cor: the actual correspondence}
Let $f: A\to G$ be a continuous bounded map, and assume $\{f(a)ax: a\in A\}\cap Y_0 =\emptyset$ for some $x\in G/\Gamma$. Then for every compact set $K\subset G/\Gamma$ there exists a bounded correspondence $\tilde f: A\to G$ of multiplicity $1$ such that \[\{nax: a\in A, n\in \tilde f(a)\}\cap K =\emptyset.\]
\end{cor}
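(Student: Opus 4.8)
The idea is to compose the given bounded map $f:A\to G$ (viewed as a correspondence of degree $1$) with the correspondence $\phi_K:G/\Gamma\setminus S_0\to G$ produced by Lemma \ref{lem: far correspondence}, and to keep track of the basepoint throughout so that the resulting correspondence still acts on $A$ and misses $K$. The hypothesis $\{f(a)ax:a\in A\}\cap S_0=\emptyset$ is exactly what makes the composition possible: the orbit-like set determined by $f$ lands in the domain $G/\Gamma\setminus S_0$ of $\phi_K$, so composing is not obstructed by the puncture at $S_0$.

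\textbf{Step 1: set up the first correspondence.} Let $\beta:A\to G/\Gamma$ be the map $\beta(a):=f(a)ax$; by assumption its image lies in $G/\Gamma\setminus S_0$. I would encode the datum ``$a\mapsto f(a)$, with basepoint tracked by $\beta$'' as a correspondence $F:A\to G\times (G/\Gamma\setminus S_0)$ (or more simply keep $A$ itself as the source manifold, $\tau=\mathrm{Id}_A$, and $\xi(a)=(f(a),\beta(a))$), which has degree $1$ and bounded $G$-component since $f$ is bounded. Here $A\cong\RR^{\dim A}$ is an orientable boundaryless manifold, and $\tau=\mathrm{Id}_A$ is proper and equidimensional, so this is a legitimate correspondence.

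\textbf{Step 2: compose with $\phi_K$ and control the basepoint.} Apply Lemma \ref{lem: far correspondence} to $K$ to get the bounded degree-$1$ correspondence $\phi_K:G/\Gamma\setminus S_0\to G$ with the property that $\left(\phi_K(\Lambda)\cdot\Lambda\right)\cap K=\emptyset$ for all $\Lambda$. Now form the composition of $F$ with $\phi_K$ in the sense of Definition \ref{de: composition}: this requires the Transversality Theorem to replace a non-transverse map by a transverse one, yielding composition data $(f_3,h)$. Define $\tilde f:A\to G$ by $\tilde f(a):=\{n\cdot f(a): n\in\phi_K(\beta(a))\}$ — that is, $\tilde f$ records the product of the matrix pushing $\beta(a)=f(a)ax$ away from $K$ with $f(a)$ itself. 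Then for $n'\in\tilde f(a)$ we have $n'=n f(a)$ with $n\in\phi_K(\beta(a))$, so $n'ax = n\, f(a)ax = n\,\beta(a)$, which lies outside $K$ by the defining property of $\phi_K$. Boundedness of $\tilde f$ follows since $\phi_K$ has bounded $G$-image and $f$ is bounded, so the product of the two bounded sets of group elements is bounded; the degree is $1\cdot 1=1$, in particular nonzero, so $\tilde f$ is a genuine nontrivial bounded correspondence $A\to G$.

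\textbf{Step 3: the compatibility subtlety.} The one thing needing care is that the composition of correspondences is not canonical and is constructed only up to a transversality perturbation $h$; I must ensure the perturbation does not spoil the ``misses $K$'' conclusion. This is precisely the role of the $U$-compatibility notion in Definition \ref{de: composition}: take $U\subseteq A\times (G/\Gamma)$ to be the open set of pairs $(a,\Lambda)$ with $\Lambda\notin K'$ for a slightly shrunk compact neighborhood $K'$ of $K$ (or, cleaner, first apply Lemma \ref{lem: far correspondence} with a slightly larger compact set $\widehat K\supset K$ so there is room to spare), and invoke the lemma asserting the existence of a $U$-compatible composition. Then along the whole homotopy $h_t$ the basepoint stays outside $K$, so after the transversality perturbation the final correspondence $\tilde f$ still satisfies $\{nax:a\in A,\ n\in\tilde f(a)\}\cap K=\emptyset$. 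I expect this bookkeeping — tracking the basepoint $f(a)ax$ through the fibered-product construction and verifying that the required open set $U$ meets the hypothesis of the $U$-compatible composition lemma — to be the main (though routine) obstacle; everything else is a direct concatenation of the already-established lemmas.
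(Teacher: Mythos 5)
Your proposal is correct and follows essentially the same route as the paper: view $r(a) = f(a)ax$ as a map into $G/\Gamma\setminus S_0$, compose with $\phi_K$ from Lemma \ref{lem: far correspondence} using a $U_K$-compatible composition in the sense of Definition \ref{de: composition}, and take the pointwise product $g\cdot f$ as the final bounded correspondence. The only small step you leave out is that the paper first replaces $f$ by a smooth map via Whitney Approximation (while preserving the open condition $\{f(a)ax:a\in A\}\cap S_0=\emptyset$), which is needed because the composition of correspondences relies on the Transversality Theorem and hence requires smoothness.
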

\begin{proof}
Let $\phi_K$ be as in Lemma \ref{lem: far correspondence} and let $V_K\subset G$ be a bounded open set containing the image of $\phi_K$.
We can approximate $f$ by a smooth map using the Whitney Approximation Theorem while preserving the open condition $\{f(a)ax: a\in A\}\cap Y_0 =\emptyset$,
and assume from now on that $f$ is smooth.

Consider the map $r: A\to G/\Gamma\setminus Y_0, r(a) = f(a)ax$. We would like to find a composition $g = \phi_K\circ r$. For every $a\in A$ we have $\phi_k(r(a))\cdot r(a) \cap K=\emptyset$. Denote by $U_K\subset A\times G$  the subset of $a, m$ such that $mr(a)\nin K$. We can choose the composition $g$ to be a $U_K$-compatible composition, that is, such that $g(a)r(a) = g(a)f(a)ax \cap K=\emptyset$ for every $a\in A$. Since both $f$ and $g$ are bounded, the pointwise multiplication correspondence $gf$ is bounded as well and is the sought-for correspondence.
\end{proof}

% subsection attaining_a_correspondence (end)
\subsection{Construction of a cover (reprise)} % (fold)
\label{sub: the_actual_proof}

We are now ready to prove Theorem \ref{thm: alternative intersection} and Corollary \ref{cor: un-homotopable compact orbit}. 
\begin{de}\label{de: has good restriction}
Let $T$ be the maximal real split torus in $G$, $\Psi$ be a set of characters $\lambda: T\to \RR$, and $A\subset T$ be an $l$ dimensional subgroup. We say that $A$ \emph{has good restrictions with respect to $\Psi$} if for every linearly independent set $\Psi'\subset \Psi$ of size at most $l$ the restrictions of $\Psi'$ to $A$ are also independent.
\end{de}
The definition is inspired by the attempt to control the invariance dimension of intersections as in Assertion \ref{point: linearly independent characters} of Proposition \ref{prop: cover locally finite and char lin indp}. Note that not having good restrictions with respect to  $\Psi$ is a closed condition on $A$ and Zariski closed on $\fa\subset \ft$. 

\begin{proof}[Proof of Theorem \emph{\ref{thm: alternative intersection}}]
Let $A\subset T$ be a subgroup with good restrictions with respect to $\Psi$, the set of functionals defined in Proposition \ref{prop: cover locally finite and char lin indp}. Let $\{f(a)ag_0\Gamma: a\in A\}$ be a translated trajectory that does not intersect a deformation retract $Z\subset G/\Gamma$. It follows from Lemma \ref{lem: exists compact subset h-equiv} that there exist a compact space $Y$ and a homotopy equivalence $f: Y\to G/\Gamma$ such that $Y_0={\rm Im}(f) \subset Z$. 

Let $K_0\subset G/\Gamma$ be the compact set and let $\{U_\fu:\fu\in \calr\}$ be the cover of $G_0 = \{g\in G:\pi(g)\nin K_0\}$ as in \S \ref{sec: the cover}.
% \ref{thm: exists cover with small intersection}.
The intersection of any $r+1$ sets from $\{U_\fu:\fu\in \calr\}$ is trivial.

By Corollary \ref{cor: the actual correspondence}, there exists a bounded correspondence $\tilde f = (M, \tau, \xi) : A\to G$ of multiplicity $1$ such that for every $a\in A$ and $n\in \tilde f(a)$ we have $nag_0\Gamma\nin K_0$, i.e., $\tilde{f}(a)ag_0\subset G_0$. 
Since $\tilde f$ has a bounded image there is a compact set $K_G\subset G$ such that ${\rm Im}(f)\subset K_G$. 

For every $\fu\in \calr$ denote 
\[U_\fu^{M}:= \{m\in M: \xi(m)\tau(m)x\in U_\fu\}.\]
Since $f(a)ag_0\subset G_0$ for all $a\in A$, it follows that the sets $U_\fu^M$ cover $M$. 
By Corollary \ref{cor: compact trajectory perturbation},
% \ref{thm: cover is locally finite}
there exists a locally finite cover $\{\widetilde U_{\fu}^{K_G, Ag_0}:\fu\in \calr\}$ of $A$ such that 
\[U_\fu^M \subset \tau^{-1}(\widetilde U_{\fu}^{K_G, Ag}),\]
and
\[
    \widetilde{U}_\fu^{K,Ag_0} = \left\{a\in A\::\:\forall\lambda\in\Psi_\fu,\:\lambda(a)>d_\lambda\right\}\]
for some set of characters $\Psi_{\fu}: T\to \RR$ and constants $d_\lambda:\lambda\in \Psi_{\fu}$. 
Moreover, let $\fv_1,\dots,\fv_k\in\calr$. If  $\bigcap_{i=1}^{k} U_{\fv_i}\ne \emptyset$, then by Proposition \ref{prop: cover locally finite and char lin indp} there exist linearly independent characters $\lambda_1,\dots,\lambda_k\in X(T)$ such that for any $1\le i\le k$, $\lambda_i\in\Psi_{\fv_i}$. By the assumption that $A$ has good restrictions with respect to $\Psi$ it follows that the restrictions of $\lambda_1,\dots,\lambda_k$
to $A$ are linearly independent.
Consequently, $\invdim(\widetilde U_{\fu_1}^{K_G,Ag_0}\cap \widetilde U_{\fu_2}^{K_G,Ag_0}\cap\dots\cap \widetilde U_{\fu_k}^{K_G,Ag_0})\le r-k$, and Corollary \ref{cor: R_n convex cover} lead to a contradiction.\qed
\end{proof}

\begin{remark}[Avoiding the good restrictions assumption]
\label{rem: proof of cor somthing}
The good restrictions assumption on $T$ was used to bound the invariance dimension of the intersection $\widetilde U_{\fu_1}^{K_G,Ag_0}\cap \widetilde U_{\fu_2}^{K_G,Ag_0}\cap\dots\cap \widetilde U_{\fu_k}^{K_G,Ag_0}$. The statement of Theorem \ref{thm: alternative intersection} would hold if we could bound this invariance dimension in any other way. The simplest assumption would be to assume that $U_{\fu_1}^{K_G,Ag_0}$ are always bounded, that is, $\varrho_i(a)\bp_\fv\xrightarrow{a\to \infty} \infty$ for all $\fv\in \calr_i$. 
Another way one can avoid the good restriction assumption is by assuming that the orbit is divergent, replacing the cover 
$\{U_\fu^M:\fu\in \calr\}$ with an cover of the form 
$\{U_\fu^M\cap U_{\fu,f}^M: \fu\in \calr\}$ as in the proof of theorem \ref{thm: divergence classification}, and $\widetilde{U}_\fu^{K,Ag_0}$ by the analogous bordered sets. 

These are the ideas behind Proposition \ref{prop: deformation retract intersection}.
\end{remark}
% subsection the_actual_proof (end)
% Let $W\subset \fg$ be the open neighbourhood from Proposition \ref{prop: W defn}. By The Compactness Criterion \ref{prop: compactness criterion} the set $K$ of $\pi(g)\in G/\Gamma$ for which no $\fu\in \calr$ is $W$-active for $g$ is compact. 

% Let $x = \pi(g)\in G/\Gamma$ and $f = (\Gamma, \pi, g): A\to G$ be a bounded correspondence. 

% There exists $K\subset G/\Gamma$ and representations $V_1,...,V_{\rk_\QQ(G)}$ sucht that if $a\pi(g) \nin K$  then there exists ...

% section proof_of_theorem_thm: intersection (end)
\begin{proof}[Proof of Corollary \emph{\ref{cor: un-homotopable compact orbit}}]
Prasad and Ragunathan \cite{PR} prove the existence of compact trajectories $Tg_0\Gamma\subset G/\Gamma$. 
Since $Tg_0\Gamma$ is compact, it follows that $T\cap g_0\Gamma g_0^{-1}$ is a full-rank subgroup in $T$. 
By the density of rank-$r$ subgroups of $T\cap g_0\Gamma g_0^{-1}$ we conclude that there exists $A \subset T$ of $\dim A=r$ with $\rk (A\cap T\cap g_0\Gamma g_0^{-1})=r$ and $\fa \in U\subset {\rm Gr}(\ft, r)$, where $U$ is the Zariski open set provided by Theorem  \ref{thm: alternative intersection}. 

Finally, by Theorem \ref{thm: alternative intersection}, every homotopy of the compact orbit $Ag_0\Gamma$ intersects every compact deformation retract of $G/\Gamma$, the existence of which is Theorem \ref{thm: there is compact deformation retract}.
\end{proof}

\section{Example \ref{ex: compact peripheral orbit}} % (fold)
\label{sec: counter_example}
In this section, we recall \cite[Example 1]{TW} which shows that Theorem \ref{thm: alternative intersection} does not hold for all $A\subset T$. That is, there exists an orbit $Ax$, where $A\subset T$, $\dim A= \rk_\QQ (\bG)$, and $x\in G/\Gamma$, which can be homotoped away from every compact set using multiplication by group elements.
We further show that these orbits can be chosen to be compact, a fact that was not shown in \cite[Example 1]{TW}, and investigate divergent trajectories in $G/\Gamma$.

\subsection{The space}
Let $B$ be an order in a quaternion algebra that splits over $\RR$, that is, $B\otimes \RR\cong M_2(\RR)$. 
% \tcb{{\em Quaternion algebra} is over a field. This is an order in a quaternion algebra. Is it a maximal order?} 
For concreteness, take $B := \ZZ[i, j, k]$ such that $i^2 = -1, j^2 = 3, ij = -ji = k$.  
Let $\tilde \bG = \GL_2(B)$. 
Since $\tilde\bG(\RR)\cong \GL_4(\RR)$, we may define $\bG\subset \tilde\bG$ to be the inverse image of $\SL_4(\RR)$, that is, the unique subgroup $\bG\subset \tilde \bG$ which satisfies $\bG(\RR) = \SL_4(\RR)$.
%This is an algebraic group over $\ZZ$, acting on $B^2$. Using this action we get a `determinant' denoted $`\det\text{\hspace{-0.03cm}'}:\tilde \bG\to \ZZ$. Let the kernel of the determinant be $\bG\subset \tilde \bG$.
Since $B$ is a division algebra, the maximal rational torus in $\bG$ is \[
\bS = \left\{\begin{pmatrix}
	t&0\\
	0&t^{-1}
\end{pmatrix}: t\in \QQ\right\}.\]
In particular, $\rk_\QQ(\bG) = 1$. 
In order to describe elements of these groups we will define the isomorphisms explicitly.
Let $\iota: B\otimes\RR \to M_2(\RR)$ be the isomorphism defined by \[
\iota(i):=\begin{pmatrix}
    0 & -1 \\ 1 & 0 
\end{pmatrix},\;\iota(j):=\begin{pmatrix}
    \sqrt{3} & 0 \\ 0 & -\sqrt{3} 
\end{pmatrix},\;\iota(k):=\begin{pmatrix}
    0 & \sqrt{3} \\ \sqrt{3} & 0 
\end{pmatrix}; \]
it induces an isomorphism $\iota:\bG(\RR)\to G:= \SL_4(\RR)$. %We use $\iota$ for notational convenience. We think of elements in $\bG$ as $2\times 2$-matrices while on elements in $\SL_4(\RR)$ as four-dimensional matrices. 

Let $\Gamma := \iota(\bG(\ZZ))$. We will study the space $G/\Gamma$. 
Other objects we use in both examples are the unipotent subgroup
\[\bN = \left\{\begin{pmatrix}
	1&*\\
	0&1
\end{pmatrix}\right\}\subset \bG, \]
and \[N = \iota(\bN(\RR)) = \left\{\begin{pmatrix}
	1&&*&*\\
	&1&*&*\\
	&&1&\\
	&&&1
\end{pmatrix}\right\}\subset \SL_4(\RR).\]
Then, \[
\fn=\operatorname{Lie}(N_\RR)=\left\{\begin{pmatrix}
	0&&*&*\\
	&0&*&*\\
	&&0&\\
	&&&0
\end{pmatrix}\right\}\]
is a unipotent Lie algebra (unique up to conjugation), and its integer points are \[
\fn_\ZZ=\fn\cap\fg_\ZZ=\iota\left(\left\{\begin{pmatrix}
	0&X\\
	0&0
\end{pmatrix}\;:\;X\in B\right\}\right). \]

\subsection{Example \ref{ex: compact peripheral orbit}}
Denote 
\begin{align*}
\fg_\ZZ & := \left\{X\in M_2(\iota(B))\;:\;\operatorname{trace}(X)=0\right\}\subset M_4(\RR),\\
g_s & := \exp(\diag(s, s, -s, -s)),\\
S& := \iota(\bS(\RR)) = \left\{%i\begin{pmatrix}t&0\\	0&t^{-1}\end{pmatrix} 
\pm g_s\::\: s\in\RR\right\}.
\end{align*}

Let \[m := \begin{pmatrix}
    1&&&
  \\&&-1&
 \\&1&&
  \\&&&1
\end{pmatrix}.\]
We are interested in orbits of the form $Smg_s\Gamma$ for some $s\in \RR$. 

\begin{claim}\label{claim: compact orbit}
For all $s\in \RR$ the orbit $Smg_s\Gamma$ is periodic. \end{claim}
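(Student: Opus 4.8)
The plan is to show that $Smg_s\Gamma$ is periodic by exhibiting a nontrivial element of the stabilizer $S \cap mg_s\Gamma g_s^{-1}m^{-1}$, or equivalently by producing a rational torus in $\bG$ that contains a conjugate of $S$ and is anisotropic in the appropriate sense. The cleanest route is to observe that periodicity of $Smg_s\Gamma$ is equivalent to periodicity of $S\cdot (mg_s\Gamma)$, and since $S$ is the real points of the $\QQ$-split torus $\bS$, the orbit $S h\Gamma$ is periodic if and only if $h^{-1}\bS h$ is defined over $\QQ$ and $h^{-1}Sh \cap h^{-1}\bG(\ZZ)h \cdot$ (something) is a lattice in $h^{-1}Sh$. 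Concretely, I would first reduce to $s=0$: since $g_s$ normalizes $S$ (indeed $g_s \in S$ up to sign), the orbit $Smg_s\Gamma = Sm g_s\Gamma$, and one checks $g_s$ and $m$ interact so that it suffices to understand $Sm\Gamma$; the parameter $s$ only reparametrizes the orbit and does not affect periodicity.

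The key computation is then to identify the conjugated torus $m^{-1}Sm$ (equivalently $\iota^{-1}(m)^{-1}\bS\,\iota^{-1}(m)$ inside $\bG = \GL_2(B)^{(1)}$) and verify it is the group of real points of a $\QQ$-torus which becomes, over $\QQ$, a norm-one torus of a quadratic (or quartic) subfield of the quaternion algebra $B$ — hence $\QQ$-anisotropic but $\RR$-split. The matrix $m$ is a signed permutation-type matrix, and $m^{-1}\diag(t,t^{-1})m$ (thinking of the $2\times 2$ block structure over $B$) will be conjugate to an element like $\begin{pmatrix} 0 & t \\ t^{-1} & 0\end{pmatrix}$-type or a mixed diagonal, whose $\QQ$-Zariski closure is a one-dimensional $\QQ$-torus. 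Once this torus $\bS' := m^{-1}\bS m$ is shown to be defined over $\QQ$, the quotient $S'/(S' \cap \Gamma)$ being compact follows from the standard fact (Borel–Harish-Chandra, or directly: a $\QQ$-anisotropic torus has compact adelic-to-arithmetic quotient) that for a $\QQ$-torus $\bS'$ with $\bS'$ having no $\QQ$-split part beyond what sits inside a compact factor — here $\bS'$ is $\QQ$-anisotropic — the arithmetic quotient $\bS'(\RR)/\bS'(\ZZ)$ is compact. Combined with $g_s$ giving a bounded reparametrization, this yields periodicity of $Smg_s\Gamma$ for every $s$.

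The main obstacle I anticipate is the explicit verification that $m^{-1}\bS m$ is defined over $\QQ$: this requires unwinding the isomorphism $\iota:B\otimes\RR \to M_2(\RR)$ and checking that conjugating the very concrete real torus $\{\pm g_s\}$ by $m$ lands in $\iota(\bG(\QQ))$'s Zariski closure of a rational torus. One has to be careful because $m$ itself may not be in $\iota(\bG(\QQ))$, so the argument is really that $m^{-1}\bS m$ happens to be rational even though $m$ is irrational — this is the heart of the example. I would carry this out by writing $m = \iota(b_0)$ or $m = \iota(b_0)\cdot(\text{something})$ for an explicit $b_0 \in B\otimes\RR$, computing $b_0^{-1}\, \mathrm{diag}(t,t^{-1})\, b_0$ inside $\GL_2(B\otimes\RR)$, and recognizing the result as a torus whose matrix entries, as functions of a rational parameter, lie in $B(\QQ) \otimes \QQ$. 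Then periodicity is immediate from compactness of $\bS'(\RR)/\bS'(\ZZ)$ for the $\QQ$-anisotropic torus $\bS'$, plus the observation that $mg_s\Gamma$ stays in one $\bS'(\RR)$-orbit as $s$ varies.
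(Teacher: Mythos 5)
Your strategy — identify $m^{-1}\bS m$ as a $\QQ$-anisotropic torus and invoke Borel--Harish-Chandra — is the ``right'' conceptual explanation for why the example works, but as written it is a plan rather than a proof: you explicitly flag the crux (verifying that $m^{-1}\bS m$ is defined over $\QQ$, even though $m\notin \bG(\QQ)$) as ``the main obstacle'' and describe how you would attack it without actually doing so. Since that verification is precisely the entire content of the claim, the proposal has a genuine gap. A second, smaller gap is the reduction to $s=0$: you assert that ``the parameter $s$ only reparametrizes the orbit and does not affect periodicity,'' but a priori $S\cap mg_s\Gamma g_s^{-1}m^{-1}$ depends on $s$, since $g_s\Gamma g_s^{-1}\neq \Gamma$. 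The reduction is true, but only because one can produce a \emph{diagonal} $\gamma\in \Gamma$ with $m\gamma m^{-1}\in S$, so that conjugation by $g_s$ fixes $\gamma$; this is exactly the nontrivial point and cannot be waved away.

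The paper avoids both issues by a short explicit computation. It takes the unit $u=2+\sqrt{3}\in\ZZ[\sqrt 3]$, sets $t_0=\log u$, and checks directly that
\[
m^{-1}g_{t_0}m=\diag(u,u^{-1},u,u^{-1})\in\Gamma .
\]
Because this matrix is diagonal it commutes with every $g_s$, which immediately gives $g_{t_0}mg_s\Gamma=mg_sm^{-1}g_{t_0}m\Gamma=mg_s\Gamma$ for all $s$. Thus $g_{t_0}$ (together with $-I\in\Gamma$) lies in the stabilizer of every point $mg_s\Gamma$, and $S/\langle -I,g_{t_0}\rangle$ is compact, so the orbit is periodic. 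This is more elementary and self-contained than your approach: no appeal to the rationality of $m^{-1}\bS m$ or to compactness criteria for anisotropic tori, just a single unit in $\ZZ[\sqrt 3]$. If you wish to salvage your more structural argument, the missing input is exactly this computation: $m^{-1}\bS m$ is the real points of the norm-one torus of $\QQ(\sqrt 3)$ embedded diagonally via $j$, and $2+\sqrt3$ is a fundamental unit witnessing its $\QQ$-anisotropy; once you exhibit it, your route and the paper's coincide.
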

\begin{proof}
Note that $u := 2 + \sqrt 3$ is a unit in $\ZZ[\sqrt 3]$. One can verify that for $t_0:=\log(u)$ we have
\[m^{-1}(g_{t_0})m=\diag(u,u^{-1},u,u^{-1}) = \iota\begin{pmatrix}
2+j&0\\0&2-j
\end{pmatrix} \in \Gamma
.\] Since the diagonal group is commutative and invariant under conjugation by $m$, we deduce that
\begin{align}\label{eq: counter periodic}
    \nonumber g_{t_0}mg_s\Gamma& =g_{t_0}mg_sm^{-1}m\Gamma\\
    &=mg_sm^{-1}g_{t_0}m\Gamma\\\nonumber
    &=mg_s\Gamma,
\end{align}
as desired.
\end{proof}

Let $\varrho=\varrho_1: G\rightarrow\GL(V)$ be the unique $\QQ$-fundamental representation defined in \S \ref{sec: compactness criterion} (note that in our setting $r=\rank_\QQ \bG=1$), and recall the definition of $\bp_\fu$ given a unipotent subspace $\fu$ of $\fg$, also defined in \S\ref{sec: compactness criterion}. 
Note that $\varrho(g_s) \bp_\fn = \exp(8s) \bp_\fn$ and $\varrho(m^{-1}g_tm) \bp_\fn = \bp_\fn$ for all $s,t\in \RR$. 
It follows from Proposition \ref{prop: compactness criterion} that $Smg_s\Gamma$ lies outside any compact set for all $s$ small enough. 

\subsection{Divergent trajectories in \texorpdfstring{$G/\Gamma$}{G/g}}
Let $\alpha_1 < \alpha_2 < \alpha_3 < \alpha_4$ be real numbers such that $\alpha_1+\alpha_2+\alpha_3 + \alpha_4 = 0$, and let \[A=\left\{a_s = \exp(s\diag(\alpha_1, \alpha_2, \alpha_3, \alpha_4)): s\in\RR\right\}.\] 
We will investigate the $A$-divergent trajectories and show that a complete characterization of divergent trajectories is not as simple as one might hope and even its dimension depends on $A$.

First, by Theorem \ref{thm: divergence classification}, all $A$-divergent trajectories diverge obviously. 
The proof of Theorem \ref{thm: divergence classification} implies a more restricted form of divergence. Assume that $A\pi(g)$ is a divergent trajectory. 
It follows from the proof of the theorem that there are rational parabolic subgroups $\bP_1, \bP_2\subset \bG$ with corresponding unipotent radicals $\fv_1, \fv_2$ such that $\varrho(a_sg)\bp_{\fv_1}\xrightarrow{s\to \infty} 0$ and $\varrho(a_sg)\bp_{\fv_2}\xrightarrow{s\to -\infty} 0$, where $\varrho=\varrho_1$ is the unique $\QQ$-fundamental representation defined in \S\ref{sec: compactness criterion}. 

We note that we could have alternatively derived the existence of $\bP_1, \bP_2$ using that $\bG$ has $\rank_\QQ \bG=1$ and an analysis of the cusps of $X$. 

Let $U=\RR^4$ denote the standard representation of $G=\SL_4(\RR)$, and let $E_1=\{e_1,\dots,e_4\}$ denote the standard basis of $U$. Then, \[
E_2:=\{e_i\wedge e_j,\quad 1\le i\le j\le 4\}\] 
is a basis for $\bigwedge^2 U$. Since $G$ acts on $U$ by left multiplication, it acts on $\bigwedge^2 U$ by \[
g(u_1\wedge u_2)=(g u_1)\wedge (g u_2). \]

In order to understand how $A$ acts on unipotent radicals, the following claim relates the action of $G$ on unipotent radicals to the one of $\SL_4(\RR)$ on two-dimensional subspaces of $\RR^4$. 

\begin{claim}
\label{claim: simplifying radicals}
There is a map of representations $\left(\bigwedge^2U\right)^{\otimes 4}\to V$ such that $(e_1\wedge e_2)^{\otimes 4}\mapsto\bp_\fn$.
In particular, for any $g\in G$,
\[ \varrho(a_sg)\bp_\fn \xrightarrow{s\to \infty} 0\text{ if and only if }a_sg(e_1\wedge e_2) \xrightarrow{s\to \infty} 0.\]
\end{claim}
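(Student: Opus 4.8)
The plan is to build $\Phi$ from highest weight theory. First I would unwind $\bp_\fn$: by the construction in \S\ref{sec:compactness criterion}, $\bp_\fn=\bu_1\wedge\cdots\wedge\bu_4\in\bigwedge^4\fg$ for a $\ZZ$-basis $\bu_1,\dots,\bu_4$ of $\fn\cap\fg_\ZZ$, and since $\fn\cap\fg_\ZZ=\iota\!\left(\left\{\left(\begin{smallmatrix}0&X\\0&0\end{smallmatrix}\right):X\in B\right\}\right)$ consists of matrices supported on the upper-right $2\times 2$ block, these $\bu_i$ also form an $\RR$-basis of $\fn=\operatorname{span}_\RR(E_{13},E_{14},E_{23},E_{24})$, so $\bp_\fn=c\cdot E_{13}\wedge E_{14}\wedge E_{23}\wedge E_{24}$ for some $c\neq0$. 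Fixing the diagonal torus $T\subset\SL_4(\RR)$ and the Borel of upper-triangular matrices, and writing $\varepsilon_i$ for the character $\diag(t_1,\dots,t_4)\mapsto t_i$ of $T$, a one-line computation of $\Ad(t)$ on $E_{13},E_{14},E_{23},E_{24}$ shows that $T$ acts on $\RR\bp_\fn$ through $4(\varepsilon_1+\varepsilon_2)$ (consistently with the stated $\varrho(g_s)\bp_\fn=e^{8s}\bp_\fn$); since $V=V_1$ is irreducible with $\bp_\fn$ a highest weight vector by Remark \ref{rem: fundamental representations}, $V$ is the irreducible $G$-module of highest weight $4(\varepsilon_1+\varepsilon_2)$.

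Now $e_1\wedge e_2$ is the highest weight vector of $\bigwedge^2U$ for the same Borel, of weight $\varepsilon_1+\varepsilon_2$, so $(e_1\wedge e_2)^{\otimes4}$ is a highest weight vector of $\left(\bigwedge^2U\right)^{\otimes4}$ of weight $4(\varepsilon_1+\varepsilon_2)$. Hence the $G$-submodule $V'\subseteq\left(\bigwedge^2U\right)^{\otimes4}$ it generates is irreducible of that highest weight (a finite-dimensional cyclic module generated by a highest weight vector is irreducible of that weight), so $V'\cong V$, and by Schur's lemma there is a unique isomorphism $\psi\colon V'\to V$ with $\psi\bigl((e_1\wedge e_2)^{\otimes4}\bigr)=\bp_\fn$. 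As $\left(\bigwedge^2U\right)^{\otimes4}$ is a finite-dimensional, hence (by Weyl's theorem) completely reducible, $\SL_4(\RR)$-module, there is a $G$-equivariant projection $\operatorname{pr}\colon\left(\bigwedge^2U\right)^{\otimes4}\to V'$; set $\Phi:=\psi\circ\operatorname{pr}$. Then $\Phi$ is a map of representations with $\Phi\bigl((e_1\wedge e_2)^{\otimes4}\bigr)=\bp_\fn$, which is the first assertion.

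For the ``in particular'' part, fix $g\in G$. The diagonal action gives $a_sg\cdot(e_1\wedge e_2)^{\otimes4}=\bigl(a_sg(e_1\wedge e_2)\bigr)^{\otimes4}$, which lies in the $G$-invariant submodule $V'$, and $G$-equivariance of $\Phi$ yields $\Phi\bigl((a_sg(e_1\wedge e_2))^{\otimes4}\bigr)=\varrho(a_sg)\bp_\fn$. Since $\Phi$ restricts on $V'$ to the linear isomorphism $\psi$ of finite-dimensional spaces, hence a homeomorphism, $\varrho(a_sg)\bp_\fn\to0$ if and only if $\bigl(a_sg(e_1\wedge e_2)\bigr)^{\otimes4}\to0$, which holds if and only if $a_sg(e_1\wedge e_2)\to0$ (for equivalent norms on these finite-dimensional spaces, $\|x^{\otimes4}\|\asymp\|x\|^4$). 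This is the claim.

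I expect the only genuine content to sit in the first two paragraphs: checking that the highest weight of $\bp_\fn$ equals that of $(e_1\wedge e_2)^{\otimes4}$, and invoking the standard facts (irreducibility of a cyclic module generated by a highest weight vector; semisimplicity of finite-dimensional modules of the split group $\SL_4(\RR)$). Both are routine, so there is no real obstacle; the map $\Phi$ is essentially forced once the weights match, and the limiting statement then follows formally from the fact that $\Phi$ is a linear isomorphism on $V'$. If one wanted an explicit $\Phi$ one could try to write it via contractions against the $G$-invariant volume form on $U=\RR^4$, but the abstract construction above is both shorter and cleaner.
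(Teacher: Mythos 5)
Your proof is correct and takes essentially the same route as the paper's: both identify $(e_1\wedge e_2)^{\otimes 4}$ and $\bp_\fn$ as highest weight vectors of weight $4(\varepsilon_1+\varepsilon_2)$ and then invoke the classification of irreducible representations. You simply spell out the steps the paper leaves implicit (the cyclic-submodule-is-irreducible fact, the equivariant projection from complete reducibility, Schur normalization, and the norm comparison for the ``in particular'' clause).
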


\begin{proof}
By the classification of irreducible representations (see \cite{bourbaki 7-9, knapp}), it is enough to show that the highest weights of the two representations in question coincide, and that for some choices of ordering of the simple system $(e_1\wedge e_2)^{\otimes 4}$ and $\bp_\fn$ are highest weight vectors in $\left(\bigwedge^2U\right)^{\otimes 4}, V$, respectively. 

We parametrize the torus by $$\exp(\diag(s_1, s_2, s_3, s_4)),\quad s_1+s_2+s_3+s_4=0,$$ and note that a highest weight of $\bigwedge^2 U$ is $s_1 + s_2$ with the eigenvector $e_1\wedge e_2$. 
By the definition of $\bp_\fn$, it is the highest weight eigenvector in $V$. 
A direct computation shows that both vectors correspond to eigenvalues $\exp(4(s_1 + s_2))$.
%It follows from the fact that it is invariant to the upper triangular matrices and diagonal matrices act on it via multiplication by $\exp(4(s_1 + s_2))$ implies that this is the representation of highest weight $4(s_1 + s_2)$.
The result follows.
\end{proof}

\begin{claim}
\label{claim: algebraic description of divergence}
There is an irreducible algebraic subvariety 
${\rm Gr}_+\subset {\rm Gr}(4,2)$ such that for any $g\in G$, $a_sg(v\wedge w) \xrightarrow{s\to \infty} 0$ if and only if $g\spa(v,w) \in {\rm Gr}_+$ for every linearly independent pair of vectors $v,w\in \RR^4$. The set ${\rm Gr}_+$ is irreducible of dimension $1$ if $\alpha_1+\alpha_4 = \alpha_2+\alpha_3 = 0$ and of dimension $2$ otherwise.
\end{claim}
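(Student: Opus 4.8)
The plan is to describe the limiting behavior of $a_s g(v\wedge w)$ explicitly in the Plücker coordinates, derive the algebraic condition on $g\spa(v,w)$ that it vanishes, and then identify the resulting subvariety of the Grassmannian. First I would fix the basis $E_2 = \{e_i\wedge e_j : 1\le i<j\le 4\}$ of $\bigwedge^2 U$, and note that $a_s$ acts diagonally in this basis: $a_s(e_i\wedge e_j) = e^{s(\alpha_i+\alpha_j)}(e_i\wedge e_j)$. Writing $g(v\wedge w) = \sum_{i<j} p_{ij}\, e_i\wedge e_j$, where the $p_{ij}$ are the Plücker coordinates of $g\spa(v,w)$, we get $a_sg(v\wedge w) = \sum_{i<j} e^{s(\alpha_i+\alpha_j)} p_{ij}\, e_i\wedge e_j$. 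Since $\alpha_1<\alpha_2<\alpha_3<\alpha_4$, the six exponents $\alpha_i+\alpha_j$ are ordered with $\alpha_1+\alpha_2$ strictly smallest and $\alpha_3+\alpha_4$ strictly largest, and the middle ones are $\alpha_1+\alpha_3 < \alpha_1+\alpha_4,\ \alpha_2+\alpha_3 < \alpha_2+\alpha_4$, with $\alpha_1+\alpha_4$ and $\alpha_2+\alpha_3$ incomparable in general. The vector $a_sg(v\wedge w)\to 0$ as $s\to\infty$ if and only if $p_{ij}=0$ for every pair with $\alpha_i+\alpha_j\ge 0$; equivalently, only the coordinates with strictly negative exponent may be nonzero.

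Next I would split into the two cases of the claim according to the sign pattern of the exponents. If $\alpha_1+\alpha_4 = \alpha_2+\alpha_3 = 0$, then (using $\sum\alpha_i=0$) the negative exponents are exactly $\alpha_1+\alpha_2$ and $\alpha_1+\alpha_3$, so the vanishing condition is $p_{13}=p_{14}=p_{23}=p_{24}=p_{34}=0$, i.e. only $p_{12}$ may be nonzero. This forces $g\spa(v,w)=\spa(e_1,e_2)$, a single point; but of course ${\rm Gr}_+$ is the set of all $\spa(v,w)$ such that $g\spa(v,w)$ lies in the locus cut out by those vanishing conditions — wait, I need to be careful about the direction. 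Let me re-read: the claim says $a_sg(v\wedge w)\to 0$ iff $g\spa(v,w)\in{\rm Gr}_+$, so ${\rm Gr}_+$ is precisely the subvariety of ${\rm Gr}(4,2)$ defined by the vanishing of the Plücker coordinates $p_{ij}$ for which $\alpha_i+\alpha_j\ge 0$. So in the balanced case ${\rm Gr}_+ = \{p_{13}=p_{14}=p_{23}=p_{24}=p_{34}=0\}$. This is the Schubert-type locus of $2$-planes containing $e_1$ and contained in $\spa(e_1,e_2,e_3)$ — that is, $\{\spa(e_1,v): v\in\spa(e_2,e_3)\}$ — which is a $\mathbb{P}^1$, irreducible of dimension $1$. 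In the generic case $\alpha_1+\alpha_4\neq 0$ or $\alpha_2+\alpha_3\neq 0$; without loss of generality (they cannot both be positive since they sum to $-2(\alpha_1+\alpha_2)<0$... actually $\alpha_1+\alpha_4+\alpha_2+\alpha_3=0$) exactly one of them is negative and one positive, say $\alpha_1+\alpha_4<0<\alpha_2+\alpha_3$ or vice versa. Then the negative exponents are $\alpha_1+\alpha_2,\alpha_1+\alpha_3$, and one of $\{\alpha_1+\alpha_4,\alpha_2+\alpha_3\}$, giving ${\rm Gr}_+$ defined by the vanishing of three Plücker coordinates. I would then check this cuts out an irreducible $2$-dimensional subvariety: for instance if $p_{24}=p_{34}=p_{14}=0$ (the "$p_{i4}=0$" locus, when $\alpha_2+\alpha_3>0$ forces also... let me recompute) it is the Schubert variety of $2$-planes inside $\spa(e_1,e_2,e_3)$, which is ${\rm Gr}(3,2)\cong\mathbb{P}^2$, irreducible of dimension $2$. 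The complementary sub-case $p_{34}=p_{24}=p_{23}=0$ gives the $2$-planes meeting $\spa(e_1)$, i.e. $\{\spa(e_1,v)\}$... no — $p_{23}=p_{24}=p_{34}=0$ means the plane is contained in... it means the projection to $\spa(e_2,e_3,e_4)$ is at most one-dimensional, hence the plane contains a vector of $\spa(e_1)$ direction only if it meets it; this locus is also irreducible of dimension $2$ (it is a single Schubert variety $\sigma_1$, the $2$-planes meeting the line $\spa(e_1)$... actually $\sigma_1$ has dimension $3$). I would pin down the exact Schubert cell by a direct dimension count of the linear span of the allowed coordinate directions rather than by guesswork.

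The main obstacle I expect is the careful bookkeeping of which sign pattern of $\{\alpha_1+\alpha_4,\alpha_2+\alpha_3\}$ occurs and verifying in each sub-case that the locus $\{p_{ij}=0 : \alpha_i+\alpha_j\ge 0\}$ is genuinely irreducible of the stated dimension — this is a Schubert-variety identification, and while each instance is elementary, one must confirm that the vanishing set of those particular Plücker coordinates is exactly a (single, irreducible) Schubert variety and not a reducible union, and that strict vs. non-strict inequalities ($\alpha_i+\alpha_j\ge 0$ versus $>0$) are handled consistently with the condition "$a_sg(v\wedge w)\to 0$" (which requires the coefficients of the nonnegative exponents to vanish, including the zero exponents in the balanced case). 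Once the relevant Schubert variety is identified, irreducibility and the dimension count are standard. The final "in particular" about ${\rm Gr}_+$ for the divergence of $\varrho(a_sg)\bp_\fn$ then follows by combining with Claim \ref{claim: simplifying radicals}, via $\bp_\fn \leftrightarrow (e_1\wedge e_2)^{\otimes 4}$ and the observation that $a_sg(v\wedge w)\to 0$ iff $g\spa(v,w)\in{\rm Gr}_+$ applied to the two unipotent radicals $\fv_1,\fv_2$ attached to a divergent trajectory.
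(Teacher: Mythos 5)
Your proposal follows essentially the same route as the paper: diagonalize the $a_s$-action on $\bigwedge^2\RR^4$, read off that $a_s g(v\wedge w)\to 0$ precisely when the nonzero Pl\"ucker coordinates of $g\spa(v,w)$ sit only in weight spaces with $\alpha_i+\alpha_j<0$, and then identify the resulting vanishing locus in $\mathrm{Gr}(4,2)$ and compute its dimension by a case analysis on the sign of $\alpha_1+\alpha_4$ (equivalently $\alpha_2+\alpha_3$). The paper packages the same idea slightly differently, via the irreducible strata $X_{i,j}=\{\spa(v,w):v\in R^i,\,w\in R^j\}$ and the partial order $\trianglelefteq$ on pairs: the key observation there is that $I_+=\{(i,j):\alpha_i+\alpha_j<0\}$ always has a \emph{unique} maximal element (because the only $\trianglelefteq$-incomparable pair $(1,4),(2,3)$ cannot both lie in $I_+$), so $\mathrm{Gr}_+$ is a single $X_{i_1,j_1}$, and irreducibility plus the dimension $i_1+j_1-3$ drop out automatically from the parametrization by $R^{i_1}\times R^{j_1}$. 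This is precisely the ``single Schubert variety, not a union'' issue you flag at the end; the paper's formalism resolves it in one line rather than by inspecting each vanishing ideal.

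There is one concrete bookkeeping error worth fixing. In the balanced case $\alpha_1+\alpha_4=\alpha_2+\alpha_3=0$ you correctly identify the negative exponents as $\alpha_1+\alpha_2$ and $\alpha_1+\alpha_3$, but then write the vanishing condition as $p_{13}=p_{14}=p_{23}=p_{24}=p_{34}=0$ (five coordinates, ``only $p_{12}$ may be nonzero''). That describes a single point $\spa(e_1,e_2)$, which contradicts the $\PP^1$ you (correctly) describe in the next sentence. Since $\alpha_1+\alpha_3<0$, the coordinate $p_{13}$ should \emph{not} be forced to vanish; the correct locus is $\{p_{14}=p_{23}=p_{24}=p_{34}=0\}$, i.e.\ the $2$-planes containing $e_1$ inside $\spa(e_1,e_2,e_3)$, a $\PP^1$ of dimension~$1$. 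The same care is needed in the generic sub-cases: when $\alpha_1+\alpha_4<0$ the vanishing is $\{p_{23}=p_{24}=p_{34}=0\}$ ($2$-planes containing $e_1$, a $\PP^2$), and when $\alpha_2+\alpha_3<0$ it is $\{p_{14}=p_{24}=p_{34}=0\}$ ($2$-planes inside $\spa(e_1,e_2,e_3)$, also a $\PP^2$). Once you carry out the ``pin down the Schubert cell'' step you defer at the end, your argument closes the same way the paper's does.
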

\begin{remark}
The description of ${\rm Gr}_+$ is simple because of a low dimension phenomenon. For a larger dimension of $B$ the set ${\rm Gr}_+$ need not be irreducible, nor all irreducible components need be of the same dimension.
\end{remark}
\begin{remark}
Claim \ref{claim: algebraic description of divergence} is the only place we use the strict inequality of $\alpha_i$. Had we demanded $\alpha_1 \le \alpha_2 \le \alpha_3\le \alpha_4$, the lemma would remain true except for the cases $\alpha_1 = \alpha_2 < \alpha_3 = \alpha_4$, in which ${\rm Gr}_+$ is a singleton, and the trivial case $\alpha_i = 0$. The proof is slightly more technical and would not be done here.
\end{remark}
\begin{proof}
This claim is a simple case of the analysis done in \cite{S2}. We provide its proof here for the sake of completeness.

Recall that Pl\"ucker coordinates identify between $2$-dimensional subspaces $V_0 := \spa(v,w)\subset \RR^4$ and pure wedges $v\wedge w\in \bigwedge^2 \RR^4$ up to multiplication by a scalar. 
For each $0\le m\le 4$ denote $R^m: = \RR^m\times \{0\}^{4-m}\subset \RR^4$. 
Let $I_2 = \{(i,j):1\le i<j\le 4\}$.
For every pair $(i,j)\in I_2$ let  $$X_{i,j}:=\{\spa(v,w): v\in R^i, w\in R^j\quad\text{are linearly independent}\}.$$
Note that for any $(i,j)\in I_2$ the set $X_{i,j}$ is irreducible, since it is the image under span of the Zariski open set of linearly independent sets. 

By the definition, if $i_1\le i_2$ and $j_1\le j_2$, then $X_{i_1j_1}\subset X_{i_2j_2}$. 
Hence, upon defining the partial order $\trianglelefteq$ on $I_2$ by $(i_1, j_1)\trianglelefteq (i_2, j_2)$ wherever $i_1\le i_2$ and $j_1\le j_2$, we get that $X_{i_1,j_1} \subset X_{i_2,j_2}$ whenever $(i_1, j_1)\trianglelefteq (i_2, j_2)$. 
Note that if $v\in R^{i}$ and $w\in R^j$, then all coordinates of $v\wedge w$ with respect to $E_2$ are zero except perhaps the coefficients of $e_{i'}\wedge e_{j'}$ for $i'\le i$ and $j'\le j$. Moreover, Gauss elimination implies that $v\wedge w\in X_{i_0,j_0}$, where $(i_0,j_0)$ is the largest tuple (with respect to $\trianglelefteq$) such that the coefficient of $e_{i_0}\wedge e_{j_0}$ in $v\wedge w$ is non-zero. 
In particular, $a_s (v\wedge w)\xrightarrow{t\to \infty}0$ if and only if $\spa(v,w)\in X_{i,j}$ for some $i<j$ for which $\alpha_i + \alpha_j < 0$. 

Now we will do some combinatorics on tuples. Denote \[I_+:=\{(i,j): i < j \text{ and } \alpha_i+\alpha_j < 0\}.\] 
We claim that $I_+$ admits a maximum with respect to $\trianglelefteq$. 
Indeed, the only non-comparable tuples with respect to $\trianglelefteq$ are $(1,4)$ and $(2,3)$, and they cannot be both in $I_+$, since this would contradict the fact $(\alpha_1 + \alpha_4) + (\alpha_2 + \alpha_3)  = 0$. Denote by $(i_1, j_1)$ the maximum of $I_+$.
We conclude that $a_s (v\wedge w)\xrightarrow{t\to \infty}0$ if and only if $\spa(v,w)\in X_{i_1,j_1}$.

Since $(1,4)$ and $(2,3)$ are the only non-comparable elements in $I_2$, and not both in $I_+$ it follows that either $(i_1, j_1) = (1,4)$, or $(i_1, j_1) = (2,3)$, or both $(1, 4), (2,3)\nin I_+$ and then $\alpha_1 + \alpha_4 = \alpha_2 + \alpha_3 = 0$ and $(i_1, j_1) = (1, 3)$. 
Hence to show ${\rm Gr}_+ = X_{i_1,j_1}$ is as asserted in the claim, we need to compute its dimension and prove that it equals $i_1 + j_1 - 3$. 
The dimension of $$\{(v,w) \in R^{i_1}\times R^{j_1}: v,w \text{ are linearly independent}\}$$ is $i_1+j_1$. 
the dimension of the fibers of the map $\spa$ is $3$. Therefore, the dimension of $X_{i_1,j_1}$ is $i_1+i_2-3$, as desired. 
\end{proof}
Construct the set ${\rm Gr}_+$ as in Claim \ref{claim: algebraic description of divergence} and ${\rm Gr}_-$ for the inverse flow, that is, $$a_s (v\wedge w) \xrightarrow{s\to -\infty}0 \iff \spa(v,w)\in {\rm Gr}_-.$$
Let $$V_G:=\{g\in G: g\spa(e_1, e_2) \in {\rm Gr}_+\text{ and }g\spa(e_3, e_4) \in {\rm Gr}_-\}\subset G.$$
\begin{claim}
The dimension of $V_G$ is $7 + 2\dim {\rm Gr}_+$. 
\end{claim}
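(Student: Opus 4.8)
The plan is to realize $V_G$ as the preimage of ${\rm Gr}_+\times{\rm Gr}_-$ under the orbit map attached to the pair of complementary planes $P_0:=\spa(e_1,e_2)$ and $Q_0:=\spa(e_3,e_4)$, and then to read the dimension off the fiber dimension of that orbit map.

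First I would introduce the smooth map $\Phi\colon G\to {\rm Gr}(4,2)\times{\rm Gr}(4,2)$, $\Phi(g):=(gP_0,gQ_0)$, so that by definition $V_G=\Phi^{-1}({\rm Gr}_+\times{\rm Gr}_-)$. Since $gP_0\oplus gQ_0=\RR^4$ for every $g\in G$, the image of $\Phi$ is contained in the set $\mathcal{O}$ of pairs of complementary $2$-planes in $\RR^4$. One checks that $G$ acts transitively on $\mathcal{O}$ (pick a basis adapted to the two planes and rescale one vector to fix the determinant), so $\Phi$ descends to a diffeomorphism $G/L\cong\mathcal{O}$ with $L:=\stab_G(P_0,Q_0)=\{\diag(A,D):A,D\in\GL_2(\RR),\ \det A\det D=1\}$, and $\Phi\colon G\to\mathcal{O}$ is a principal $L$-bundle with $\dim L=4+4-1=7$. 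Moreover $\mathcal{O}$ is Zariski open and dense in ${\rm Gr}(4,2)\times{\rm Gr}(4,2)$, being the complement of the proper closed subvariety of pairs of planes that meet nontrivially.

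Next I would compute $\dim\bigl(({\rm Gr}_+\times{\rm Gr}_-)\cap\mathcal{O}\bigr)$. The identity of $G$ lies in $V_G$: indeed $a_s(e_1\wedge e_2)=e^{s(\alpha_1+\alpha_2)}\,e_1\wedge e_2\to 0$ as $s\to\infty$ because $\alpha_1+\alpha_2<0$ (the sums $\alpha_i+\alpha_j$ over $2$-subsets have minimum $\alpha_1+\alpha_2$ and add in complementary pairs to $0$), so $P_0\in{\rm Gr}_+$; likewise $Q_0\in{\rm Gr}_-$ since $\alpha_3+\alpha_4>0$. Hence $(P_0,Q_0)\in({\rm Gr}_+\times{\rm Gr}_-)\cap\mathcal{O}$. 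As ${\rm Gr}_+$ and ${\rm Gr}_-$ are irreducible by Claim \ref{claim: algebraic description of divergence}, the product ${\rm Gr}_+\times{\rm Gr}_-$ has local dimension $\dim{\rm Gr}_++\dim{\rm Gr}_-$ at $(P_0,Q_0)$, and since $(P_0,Q_0)$ lies in the open set $\mathcal{O}$, intersecting with $\mathcal{O}$ leaves this dimension unchanged, so $\dim\bigl(({\rm Gr}_+\times{\rm Gr}_-)\cap\mathcal{O}\bigr)=\dim{\rm Gr}_++\dim{\rm Gr}_-$. Writing the inverse flow as $a_{-s}=\exp\bigl(s\,\diag(-\alpha_1,\dots,-\alpha_4)\bigr)$ and conjugating by the coordinate permutation $e_i\mapsto e_{5-i}$ identifies ${\rm Gr}_-$ with the analogue of ${\rm Gr}_+$ attached to the tuple $(-\alpha_4,-\alpha_3,-\alpha_2,-\alpha_1)$; since the dichotomy in Claim \ref{claim: algebraic description of divergence} ($\dim=1$ exactly when $\alpha_1+\alpha_4=\alpha_2+\alpha_3=0$) is preserved by this operation, $\dim{\rm Gr}_-=\dim{\rm Gr}_+$.

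Finally, restricting the principal $L$-bundle $\Phi\colon G\to\mathcal{O}$ over $Z:=({\rm Gr}_+\times{\rm Gr}_-)\cap\mathcal{O}$ gives a principal $L$-bundle $V_G=\Phi^{-1}(Z)\to Z$, whence $\dim V_G=\dim Z+\dim L=\dim{\rm Gr}_++\dim{\rm Gr}_-+7=7+2\dim{\rm Gr}_+$. I expect the only mildly delicate points to be the two auxiliary facts — that intersecting ${\rm Gr}_+\times{\rm Gr}_-$ with the open dense orbit $\mathcal{O}$ does not drop the dimension (which uses irreducibility of the ${\rm Gr}_\pm$ and the explicit point $(P_0,Q_0)$ in the intersection), and that $\dim{\rm Gr}_-=\dim{\rm Gr}_+$; the rest is the routine fiber-dimension bookkeeping for an orbit map. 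If one prefers to avoid the symmetry argument, $\dim{\rm Gr}_-$ can be computed directly by rerunning the proof of Claim \ref{claim: algebraic description of divergence} with $\alpha$ replaced by $-\alpha$.
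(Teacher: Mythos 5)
Your proposal is correct and follows essentially the same route as the paper: both realize $V_G$ as the total space of a fibration over the set of pairs $(V,W)\in{\rm Gr}_+\times{\rm Gr}_-$ with $V\cap W=\{0\}$ via $g\mapsto(g\spa(e_1,e_2),g\spa(e_3,e_4))$, identify the fiber with the $7$-dimensional stabilizer of a complementary pair of $2$-planes, and add dimensions. Your write-up is somewhat more careful than the paper's: you explicitly verify that restricting ${\rm Gr}_+\times{\rm Gr}_-$ to the open orbit $\mathcal{O}$ does not drop dimension (using irreducibility and the witness $(P_0,Q_0)$), and you supply the conjugation argument showing $\dim{\rm Gr}_-=\dim{\rm Gr}_+$, both of which the paper leaves implicit; but these are refinements of the same argument rather than a different one.
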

\begin{proof}
We will compute the dimension of $V_G$ as a fibration.
Let $${\rm Gr}_\pm := \{(V,W)\in {\rm Gr}_+\times {\rm Gr}_-: V\cap W=\{0\}\}.$$
This set is nonempty since it contains $(\spa(e_1, e_2),\spa(e_3, e_4))$. Its dimension is $\dim {\rm Gr}_+ + \dim {\rm Gr}_- = 2\dim {\rm Gr}_+$.
Note that the map $V_G \to {\rm Gr}_\pm$ defined by $$g\mapsto (g\spa(e_1, e_2),g\spa(e_3, e_4))$$ is onto, and the dimensions of the fibers are equal to the dimension of the stabilizer of a pair of two $2$-dimensional vector subspaces of $\RR^4$, which is $7$. 
The result follows.
\end{proof}
We can now prove the following characterization.

\begin{prop}\label{prop: example divergent classification}
Every $A$ divergent trajectory is of the form $A\pi(g_Xg_\QQ)$, for $g_X\in V_G$ and $g_\QQ\in G(\QQ)$, and any such trajectory is divergent.
\end{prop}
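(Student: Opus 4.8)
The plan is to prove both implications by pushing everything through Claims \ref{claim: simplifying radicals} and \ref{claim: algebraic description of divergence}, which translate statements about the fundamental representation $\varrho=\varrho_1$ and the vectors $\bp_\fv$ ($\fv\in\calr$) into statements about the $\SL_4(\RR)$-action on $2$-planes in $\RR^4$. Write $\mathtt a := \diag(\alpha_1,\alpha_2,\alpha_3,\alpha_4)$, so $a_s=\exp(s\mathtt a)$; we read $G(\QQ)$ in the statement as $\iota(\bG(\QQ))$.

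\emph{Forward direction.} Start with a divergent $A\pi(g)$. As recalled above, the proof of Theorem \ref{thm:divergence classification} gives $\fv_1,\fv_2\in\calr$ with $\varrho(a_sg)\bp_{\fv_1}\xrightarrow{s\to+\infty}0$ and $\varrho(a_sg)\bp_{\fv_2}\xrightarrow{s\to-\infty}0$. First I would note $\fv_1\neq\fv_2$: expanding $\varrho(a_sg)\bp_\fv=\sum_\lambda e^{s\lambda(\mathtt a)}v_\lambda$ in the weight decomposition, the norm is a maximum of positive exponentials, not all zero since $\varrho(g)\bp_\fv\neq 0$, hence cannot tend to $0$ in both directions. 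Since $\rk_\QQ\bG=1$, every element of $\calr$ is the unipotent radical of a $\QQ$-conjugate of the single maximal $\QQ$-parabolic $\bP=N_\bG(\bN)$, and these $\QQ$-conjugates form one $\iota(\bG(\QQ))$-orbit; hence $\fv_i=\Ad(\gamma_i)\fn$ for some $\gamma_i\in\iota(\bG(\QQ))$, and $\bp_{\fv_i}$ is a nonzero rational multiple of $\varrho(\gamma_i)\bp_\fn$. Therefore $\varrho(a_s\,g\gamma_i)\bp_\fn\to 0$, so by Claim \ref{claim: simplifying radicals} (and its evident $s\to-\infty$ analogue) $a_s(g\gamma_i)(e_1\wedge e_2)\to 0$, and by Claim \ref{claim: algebraic description of divergence} (resp.\ the statement defining ${\rm Gr}_-$) we get $g\gamma_1\spa(e_1,e_2)\in{\rm Gr}_+$ and $g\gamma_2\spa(e_1,e_2)\in{\rm Gr}_-$. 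Now set $L_i:=\gamma_i\spa(e_1,e_2)$. Under $\iota$ these are the rational ``$B$-lines'' in $B^2\cong\QQ^4$ attached to $\fv_1,\fv_2$; being distinct they are complementary, as a nonzero intersection of two $B$-lines would be a proper nonzero $B$-submodule of a $B$-line, impossible over the division ring $B\otimes\QQ$. Since $\iota(\bG(\QQ))$ acts transitively on ordered complementary pairs of rational $B$-lines (choose $B$-generators to get an element of $\GL_2(B\otimes\QQ)$ carrying the standard pair $(Be_1,Be_2)$ to $(L_1,L_2)$, then adjust its reduced norm to land in $\bG$ using that the reduced norm of the indefinite quaternion $\QQ$-algebra $B\otimes\QQ$ surjects onto $\QQ^\times$), there is $g_\QQ\in\iota(\bG(\QQ))$ with $g_\QQ^{-1}\spa(e_1,e_2)=L_1$ and $g_\QQ^{-1}\spa(e_3,e_4)=L_2$. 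Putting $g_X:=g g_\QQ^{-1}$ we have $g_Xg_\QQ=g$, and $g_X\spa(e_1,e_2)=gL_1\in{\rm Gr}_+$, $g_X\spa(e_3,e_4)=gL_2\in{\rm Gr}_-$, i.e.\ $g_X\in V_G$.

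\emph{Converse.} Given $g_X\in V_G$ and $g_\QQ\in\iota(\bG(\QQ))$, set $\fv:=\Ad(g_\QQ^{-1})\fn\in\calr$; then $\bp_\fv$ is a rational multiple of $\varrho(g_\QQ^{-1})\bp_\fn$, hence $\varrho(a_s\,g_Xg_\QQ)\bp_\fv$ is a constant multiple of $\varrho(a_sg_X)\bp_\fn$, which tends to $0$ as $s\to+\infty$ by Claims \ref{claim: simplifying radicals} and \ref{claim: algebraic description of divergence} since $g_X\spa(e_1,e_2)\in{\rm Gr}_+$. Exactly as in the computations for Counterexample \ref{cex: compact peripheral orbit}, the compactness criterion (Proposition \ref{prop:compactness criterion}) then forces $a_s\pi(g_Xg_\QQ)$ to exit every compact set as $s\to+\infty$; symmetrically, using the unipotent radical of the opposite parabolic together with $g_X\spa(e_3,e_4)\in{\rm Gr}_-$, it exits every compact set as $s\to-\infty$. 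Hence $\{s:a_s\pi(g_Xg_\QQ)\in K\}$ is bounded for every compact $K$, so the trajectory diverges.

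\emph{Main obstacle.} The crux is the transitivity step: that $\iota(\bG(\QQ))$ acts transitively on complementary pairs of $\QQ$-rational $B$-lines, for which the only nontrivial input is surjectivity of the reduced norm of $B\otimes\QQ$ onto $\QQ^\times$ (valid since $B$ splits over $\RR$, so $B\otimes\QQ$ is indefinite). A secondary point needing care is the precise identification, under $\iota$, of $\calr$ with the rational $B$-lines and of $\spa(e_1,e_2),\spa(e_3,e_4)$ with the standard complementary pair $(Be_1,Be_2)$ — a matter of unwinding the left/right-module conventions built into $\iota$. Everything else is bookkeeping on top of the previously established Claims.
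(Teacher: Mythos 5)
Your proof is correct and follows the same overall structure as the paper's: extract $\fv_1,\fv_2\in\calr$ from (the proof of) Theorem~\ref{thm:divergence classification}, translate through Claims~\ref{claim: simplifying radicals} and~\ref{claim: algebraic description of divergence} to conditions on $2$-planes in $\RR^4$, produce a single $g_\QQ\in\bG(\QQ)$ matching the standard opposite pair to $(\fv_1,\fv_2)$, and run the converse by reversing the translation. The difference is in the crucial step of finding $g_\QQ$: the paper cites \cite[Proposition 14.21(i)]{borel} to say that a single $g_\QQ\in\bG(\QQ)$ conjugates $(\fn,\fn^-)$ to $(\fv_1,\fv_2)$, and ``encourages the reader to prove this statement in our setting.'' You actually do that exercise, and in the process fill in two points the paper leaves implicit: that $\fv_1\neq\fv_2$ (via the weight decomposition of $\varrho(a_s g)\bp_\fv$, whose norm is a max of exponentials and cannot decay in both directions), and that distinct $B$-lines are automatically complementary because $B\otimes\QQ$ is a division algebra, so the two parabolics are opposite without any case analysis. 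Your transitivity step — reduce to $\GL_2(B\otimes\QQ)$-transitivity on ordered complementary pairs and then fix the reduced norm using that $\mathrm{Nrd}:(B\otimes\QQ)^\times\to\QQ^\times$ is onto for an indefinite quaternion algebra — is the concrete content behind the Borel citation, so what you gain is an explicit, self-contained verification, at the cost of an argument specific to the quaternion setting rather than a general reference. One small imprecision: the identification ``$B^2\cong\QQ^4$'' is off; the natural rational module is $(B\otimes\QQ)^2\cong\QQ^8$, and a $B$-line in it is $\QQ$-dimension $4$, corresponding under $\iota$ (and a choice of one column of the $M_2(\RR)$ factor) to a $2$-plane in $\RR^4$ such as $\mathrm{span}(e_1,e_2)$. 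You flag this yourself as a bookkeeping matter, and the argument is unaffected, but you should state the module correctly.
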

\begin{proof}
Let $a_s\pi(g)$ be a divergent trajectory. By Theorem \ref{thm: divergence classification} (although in the rank-$1$ case it is not needed) there are unipotent radicals $\fv_1, \fv_2$ such that $\varrho(a_sg)\bp_{\fv_1}\xrightarrow{s\to \infty}0$ and $\varrho(a_sg)\bp_{\fv_2}\xrightarrow{s\to -\infty}0$.

Let $P$ be the unique $\QQ$-parabolic subgroup of $G$ that contains $S$ and such that $\fn$ is the Lie algebra of its unipotent radical. By \cite[Proposition 14.21(i)]{borel}, there exists a unique $\QQ$-parabolic subgroup $S\subset P^-\subset G$ which is opposite to $P$. Let $\fn^-$ be the Lie algebra of the unipotent radical of $P^-$.

It follows from \cite[Proposition 14.21(i)]{borel} that there is $g_\QQ\in \bG(\QQ)$ such that $\Ad(g_\QQ) \fn = \fv_1$ and $\Ad(g_\QQ)\fn^- = \fv_2$ (we encourage the reader to prove this statement in our setting). Claim \ref{claim: simplifying radicals}, implies that $a_sg g_\QQ(e_1\wedge e_2) \xrightarrow{s\to \infty}0$. Similarly, $a_sg g_\QQ(e_3\wedge e_4) \xrightarrow{s\to -\infty}0$. 
Therefore, $g_X := g g_\QQ\in V_G$. The first claim follows. 

On the other hand, for any $g_X\in V_G$ and $g_\QQ\in G(\QQ)$, the trajectory $a_s\pi(g_X g_\QQ)$ shrinks the rational vector $\varrho(g_\QQ^{-1}) \bp_\fn$ as $s\rightarrow\infty$ and shrinks $\varrho(g_\QQ^{-1})\bp_{\fn^-}$ as $s\rightarrow-\infty$. Thus, the trajectory $A\pi( g_X g_\QQ)$ diverges.
\end{proof}

\end{document}